\documentclass[11pt, oneside]{amsart}
\RequirePackage{amsmath}
\usepackage{amsthm}
\usepackage[utf8]{inputenc}
\usepackage{hyperref}
\usepackage{url}
\usepackage{nomencl}
\makenomenclature{}

\usepackage{mathtools}
\usepackage{enumerate}
\usepackage{cite}
\usepackage[shortlabels]{enumitem}
\usepackage{tikz-cd}
\usepackage{stmaryrd}
\usepackage{latexsym}
\usepackage{amsmath,amssymb,amsthm,amscd}
\usepackage{xcolor}
\usepackage[nolabel, final]{showlabels}
\title[$B$-pairs and Lubin--Tate Iwasawa theory]{Universal norms of partially de Rham $B$-pairs and perfectness in Lubin--Tate Iwasawa theory}

\author{Marvin Schneider}
\address{Ruprecht-Karls-Universität Heidelberg,
	Mathematisches Institut,Im Neuenheimer Feld 205, D-69120 Heidelberg}
\email{mschneider@mathi.uni-heidelberg.de}
\author{Rustam Steingart}
\address{Ruprecht-Karls-Universität Heidelberg,
	Mathematisches Institut,Im Neuenheimer Feld 205, D-69120 Heidelberg}
\email{rsteingart@mathi.uni-heidelberg.de}
\date{\today}

\theoremstyle{plain}
\newtheorem{thm}{Theorem}[section]
\newtheorem{lem}[thm]{Lemma}
\newtheorem{rem}[thm]{Remark}
\newtheorem{prop}[thm]{Proposition}
\newtheorem{cor}[thm]{Corollary}

\newtheorem{ex}[thm]{Example}
\newtheorem*{cor*}{Corollary}
\newtheorem{introtheorem}{Theorem}

\theoremstyle{definition}
\newtheorem{defn}[thm]{Definition}

\newcommand{\NN}{\mathbb{N}}

\newcommand{\Gal}{\operatorname{Gal}}
\newcommand{\Hom}{\operatorname{Hom}}

\newcommand{\ZZ}{\mathbb{Z}}

\newcommand{\QQ}{\mathbb{Q}}

\newcommand{\CC}{\mathbb{C}}

\newcommand{\fB}{\mathbf{B}}

\newcommand{\cR}{\mathcal{R}}

\newcommand{\dR}{\mathrm{dR}}

\newcommand{\id}{\operatorname{id}}

\newcommand{\Bdr}{\fB_{\mathrm{dR}}}

\DeclarePairedDelimiter\abs{\lvert}{\rvert}%
\DeclarePairedDelimiter\norm{\lVert}{\rVert}%
\begin{document}
\begin{abstract}
	We study the behaviour of partially de Rham Selmer groups of 
	$B$-pairs along abelian $p$-adic Lie extensions. In the cyclotomic case 
	we compare them with the universal norms studied by Berger. 
	For analytic Lubin--Tate $(\varphi,\Gamma)$-modules we prove that the $\Psi=1$ eigenspace 
	is coadmissible of the expected rank over the algebra of analytic distributions 
	and establish a comparison result between its Lie-algebra coinvariants and 
	the projective limit of the  partially de Rham Selmer groups of the attached $B$-pair. 
\end{abstract}
\maketitle

	\tableofcontents
	
	\section*{Introduction}

Let $E,K/\QQ_p$ be finite extensions and $K_\infty/K$ be a $p$-adic Lie extension with Galois group $\Gamma.$ Let $V$ be a finite dimensional $E$-vector space equipped with a continuous $E$-linear action of the absolute Galois group $G_K$ of $K$ and $T\subseteq V$ a $G_K$-stable lattice. 
An important invariant of $V$ is the Iwasawa cohomology
$$H^i_{\text{Iw}}(K_\infty/K,V):= (\varprojlim_{F} H^i(G_F,T) )[1/p],$$ where the projective limit runs over all finite intermediate extensions of $K_\infty/K$ and the transition maps are the corestriction. 
This defines a module over the completed group ring $\Lambda:= \mathcal{O}_E\llbracket \Gamma \rrbracket$ and plays an important role in the theory of $p$-adic $L$-functions. 
$E$-linear representations of $G_K$ can naturally be viewed as a subcategory of the category of $G_K$-equivariant vector bundles on the Fargues--Fontaine curve (cf.\cite{farguesfontainecourbe}). The datum of such a vector bundle is equivalent to giving a pair $(W_e,W_{\mathrm{dR}}^+)$ of free modules over the rings $E \otimes_{\QQ_p} \fB_e$ and $E \otimes_{\QQ_p}\fB_{\text{dR}}^+$ each equipped with continuous actions of $G_K,$ together with a comparison isomorphism $\Bdr \otimes_{\Bdr^+}W_{\text{dR}}^+ \cong \Bdr \otimes_{\fB_e} W_e.$ Which are precisely $B$-pairs previously introduced in \cite{Berger2008-vs} by Berger for $E=\QQ_p$ and studied extensively by Nakamura (cf. \cite{nakamura2009classification}) for general $E.$
Recall that representations ``coming from geometry'' are de Rham. The property of being de Rham is not stable under extensions. Instead an extension in $H^1_{cts}(K,V)$ is de Rham, if and only if it belongs to a certain Selmer group namely the kernel of the natural map $H^1_{cts}(G_K,V) \to H^1_{cts}(K,\Bdr \otimes_{\QQ_p}V),$ which is denoted by $H^1_g(K,V).$
In \cite{Berger2005} Berger determined (up to finite dimensional vector spaces) the subgroup $H^1_{\text{Iw},g}(V)$ consisting of classes which lie in $H^1_g(K_n,V)$ for every $n$ where $K_\infty = \bigcup K_n$ is the $p$-cyclotomic extension. 
Denoting by $\operatorname{Fil}^1V$ the maximal subrepresentation which has all Hodge--Tate weights $\geq1$ he shows that the natural (injective) map $H^1_{\text{Iw}}(\operatorname{Fil}^1V) \to H^1_{\text{Iw}}(V)$ has image contained in $H^1_{\text{Iw},g}(V)$ and the co-kernel of the induced map $$H^1_{\text{Iw}}(\operatorname{Fil}^1V) \to H^1_{\text{Iw},g}(V)$$ is finite dimensional. Recently these results have been generalised to twisted cyclotomic extensions (cf.\cite{Berger2026}). 
In the first part of the article we study the behaviour of certain \textit{partial} de Rham Selmer groups of $B$-pairs along $p$-adic Lie extensions.
The Galois cohomology of a $B$-pair can be defined as the total complex of
$$[C^\bullet_{cts}(G_K,W_e \oplus W_{\text{dR}}^+) \to C^\bullet_{cts}(G_K,W_{\text{dR}})]$$ using the usual continuous co-chain complexes. 
The partially de Rham Selmer groups can be defined if $E$ contains a normal closure of $K.$ In this case we have a natural decomposition $E \otimes_{\QQ_p}\Bdr \cong \prod_{\sigma} E \otimes_{\sigma,K}\Bdr$ indexed by the embeddings of $K$ into $E,$ which induces $W_{\text{dR}} \cong \prod_{\sigma} W_{\text{dR},\sigma}.$ We define $$H^1_{g,J}(K_n,W):= \operatorname{ker}\{H^1(K_n,W) \to \prod_{\sigma \in J} H^1(K_n,W_{\mathrm{dR},\sigma}) \}.$$ These were first introduced by Yiwen Ding in \cite{ding2017protect}. Recently in \cite{steingart2025partial} the second named author proved Duality and Euler--Characteristic results for certain cases of these groups (namely when the weights of $W$ at $J$ are non-positive).
When $W$ is not attached to a representation, we do not have an integral lattice, hence we do not have a $\Lambda$-module we can work with. Instead we will study the projective limit 
$\varprojlim_n H^1_{g,J}(K_n,W),$ which we view as a module over the algebra of smooth distributions $\mathcal{D}_\infty = \varprojlim_n E[\Gamma/\operatorname{Gal}(K_\infty/K_n)].$

Our first main result is the following 
\begin{introtheorem} (cf. Proposition \ref{prop:smoothrank})
	Let $W$ be an $E$-$B$-pair which is de Rham at $J \subset \Hom(K,E).$ Let
	$N = \sum_{\sigma \in J} \dim_E(H^0(G_K,W_{\mathrm{dR},\sigma}^+)) $. Then 
	\begin{enumerate}
		\item $H^1_{\text{Iw},g,J,\nabla}(W):= \varprojlim H^1_{g,J}(K_n,W)$ is a coadmissible $\mathcal{D}_\infty$-module of rank $[K:\QQ_p] \operatorname{rank}(W)-N.$
		\item The image of the natural map 
		$\varprojlim H^1(K_n,W(\delta)) \to \varprojlim H^1(K_n,W)$ is $H^1_{\text{Iw},g,J,\nabla}(W).$
	\end{enumerate}
\end{introtheorem}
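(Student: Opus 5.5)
The plan is to realise $H^1_{g,J}(K_n,W)$, at every finite level, as the image of the cohomology of a modified $B$-pair, and then pass to the limit. I take $\delta$ to be the character (a product of powers of $t_\sigma$, $\sigma\in J$) for which $W(\delta)\subseteq W$ is the $B$-pair with the same $W_e$ and with
$$W(\delta)^+_{\mathrm{dR},\sigma}=t^k W^+_{\mathrm{dR},\sigma}\ (\sigma\in J),\qquad W(\delta)^+_{\mathrm{dR},\sigma}=W^+_{\mathrm{dR},\sigma}\ (\sigma\notin J).$$
Here $k$ is chosen so large that all weights of $t^kW^+_{\mathrm{dR},\sigma}$ at $J$ are $\geq 1$. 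The quotient $W/W(\delta)$ is then the torsion $B$-pair $Q:=\bigoplus_{\sigma\in J}W^+_{\mathrm{dR},\sigma}/t^k$, which gives the long exact sequence
$$H^0(K_n,W)\to H^0(K_n,Q)\to H^1(K_n,W(\delta))\to H^1(K_n,W)\to H^1(K_n,Q).$$

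\textbf{Finite-level identification.} Since $W$ is de Rham at $J$, we have $W^+_{\mathrm{dR},\sigma}\cong\bigoplus_i t^{a_i}(E\otimes_{\sigma,K}\Bdr^+)$ as $G_K$-modules. The classical vanishing $H^i(K_n,t^m\mathbf{C}_p)=0$ for $m\neq0$, applied through the $t$-adic filtration, gives $H^i(K_n,t^kW^+_{\mathrm{dR},\sigma})=0$ for $i=1,2$. The sequence $0\to t^kW^+_{\mathrm{dR},\sigma}\to W_{\mathrm{dR},\sigma}\to W_{\mathrm{dR},\sigma}/t^kW^+_{\mathrm{dR},\sigma}\to 0$ therefore yields an isomorphism
$$H^1(K_n,W_{\mathrm{dR},\sigma})\cong H^1\bigl(K_n,W_{\mathrm{dR},\sigma}/t^kW^+_{\mathrm{dR},\sigma}\bigr).$$
I will compare this with the map $H^1(K_n,W)\to H^1(K_n,Q)$. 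One checks on the cone description of $B$-pair cohomology that the composite
$$H^1(K_n,W)\to H^1(K_n,Q)\to H^1\bigl(K_n,W_{\mathrm{dR},\sigma}/t^kW^+_{\mathrm{dR},\sigma}\bigr)$$
agrees, up to sign, with the $\sigma$-component of the map defining $H^1_{g,J}$. The second arrow is injective, since $H^1(K_n,W_{\mathrm{dR},\sigma}/W^+_{\mathrm{dR},\sigma})\to H^1(K_n,W_{\mathrm{dR},\sigma}/t^k W^+_{\mathrm{dR},\sigma})$ is injective by the same vanishing. It follows that $H^1_{g,J}(K_n,W)$ is exactly the image of $H^1(K_n,W(\delta))$.

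\textbf{Passage to the limit.} Write $I_n:=\operatorname{coker}(H^0(K_n,W)\to H^0(K_n,Q))$, so that
$$0\to I_n\to H^1(K_n,W(\delta))\to H^1_{g,J}(K_n,W)\to0$$
is exact, compatibly with corestriction. Each $I_n$ is finite dimensional over $E$, so $(I_n)$ is Mittag-Leffler. Hence $\varprojlim H^1(K_n,W(\delta))\to\varprojlim H^1_{g,J}(K_n,W)$ is surjective, which proves (2), and its kernel is $\varprojlim I_n$. For (1) I invoke the result, assumed established earlier for $B$-pairs, that $\varprojlim_n H^1(K_n,W(\delta))$ is coadmissible over $\mathcal{D}_\infty$ of rank $[K:\QQ_p]\operatorname{rank}(W)$. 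The rank count then reduces to showing that $\varprojlim I_n$ is coadmissible of rank $N$.

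\textbf{Computing $\varprojlim I_n$ (the main obstacle).} By the vanishing above, $H^0(K_n,Q)=\bigoplus_{\sigma\in J}H^0(K_n,W^+_{\mathrm{dR},\sigma})$, and only the weight-zero graded pieces contribute. This group equals $K_n\otimes_K H^0(K,W^+_{\mathrm{dR},\sigma})$, because the weight-zero part is Sen-trivial and Hilbert 90 applies in $\mathbf{C}_p$. By the normal basis theorem it is free of rank $\dim_E H^0(K,W^+_{\mathrm{dR},\sigma})$ over $E[\Gamma/\Gal(K_\infty/K_n)]$, with corestriction corresponding to trace, so the projective limit is $\mathcal{D}_\infty$-free of rank $N$. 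The delicate point is the $H^0(K_n,W)$-term. It stabilises for $n\gg0$ to a finite-dimensional space on which the transition maps are multiplication by the degree, composed with the restriction. I expect that its image in the limit is killed, or is at least torsion and so does not affect the rank, but this needs a careful check, in particular that the relevant inverse system satisfies Mittag-Leffler. Finally, coadmissibility of the quotient of a coadmissible module by the closed submodule $\varprojlim I_n$ follows from the standard properties of coadmissible modules over Fréchet–Stein algebras.
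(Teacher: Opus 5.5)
Your decomposition is a legitimate reorganisation of the paper's argument. Part (2), a level-wise surjection with finite-dimensional kernels followed by Mittag-Leffler, is exactly what the paper does. The problem is part (1): both halves of your rank count rest on statements you have not proved. The claim that $\varprojlim_n H^1(K_n,W(\delta))$ is coadmissible of rank $[K:\QQ_p]\operatorname{rank}(W)$ is not established anywhere earlier, and it is the heart of the matter. Writing $E[Q_n]\otimes W(\delta)\cong\prod_{\rho}W(\delta)\otimes\rho$, the Euler--Poincar\'e formula gives $\dim_{E_\rho}H^1(K,W(\delta)\otimes\rho)=[K:\QQ_p]\operatorname{rank}(W)+\dim H^0(K,W(\delta)\otimes\rho)+\dim H^2(K,W(\delta)\otimes\rho)$. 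So you need the $H^0$ and $H^2$ terms to vanish for all but finitely many $\rho$. You also leave the $H^0(K_n,W)$-term open, and your guess that it is ``killed'' in the limit is wrong. Once $H^0(K_n,W)$ stabilises, the corestriction is multiplication by the nonzero integer $[K_{n+1}:K_n]$, an isomorphism over $E$. The limit is therefore a finite-dimensional space which need not vanish. The stabilisation itself is also not justified.

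The missing ingredient in both places is Lemma~\ref{lem:trivialH0}: $H^0(G_K,W\otimes\rho)=0$ for all but finitely many finite-order characters $\rho$ of $\Gamma_K$. A nonzero class gives an injection of the irreducible $E$-representation $\rho^*$ into $W_e$. This yields a subspace of dimension $[E_\rho:E]$ fixed by an open subgroup $G_{K'}$. That is impossible once $[E_\rho:E]$ exceeds the bound $\dim_E H^0(G_{K'},W_e)\le\operatorname{rank}(W)$, which is uniform in $K'$, and only finitely many $\rho$ have $[E_\rho:E]$ bounded.

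Applied to $W$, this shows $\varprojlim_n H^0(K_n,W)\cong\prod_\rho H^0(K,W\otimes\rho)$ is finite-dimensional, so Mittag-Leffler is automatic. Hence $\varprojlim I_n$ differs from $\mathcal{D}_\infty^N$ by a finite-dimensional module. Applied via Tate duality for $B$-pairs to the dual of $W(\delta)$ twisted by the cyclotomic character, it gives $H^2(K,W(\delta)\otimes\rho)=0$ for almost all $\rho$, and hence the rank of $\varprojlim H^1(K_n,W(\delta))$. With these two inputs Lemma~\ref{lem:goodexactsequence} closes your argument. At that point, though, it is simpler to apply Lemma~\ref{lem:dimensionformula} to each $W\otimes\rho$ directly, which is the paper's proof.

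A minor point on the finite-level step: the injectivity you need is that of $H^1(K_n,W^+_{\mathrm{dR},\sigma}/t^kW^+_{\mathrm{dR},\sigma})\to H^1(K_n,W_{\mathrm{dR},\sigma}/t^kW^+_{\mathrm{dR},\sigma})$. This comes from surjectivity of $H^0(K_n,W_{\mathrm{dR},\sigma})\to H^0(K_n,W_{\mathrm{dR},\sigma}/W^+_{\mathrm{dR},\sigma})$, i.e.\ from de Rhamness at $\sigma$. The map you cite does not exist in that direction. This step is Lemma~\ref{lem:basicsequence} in any case.
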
 \footnote{The symbol $\nabla$ in the notation is suggestive. When $\Gamma$ is one-dimensional, and $\nabla$ is a basis of the Lie-algebra, one can show $\mathcal{D}_\infty \cong \mathcal{D}/\nabla.$}
Note that $\mathcal{D}_\infty$ is not a domain and ``rank'' of a module $X$ has to be understood as follows: We say $X$ is good, if the dimension of the fibre of $X$ at all but finitely many maximal ideals of $\mathcal{D}_\infty$ is constant. If $X$ is good, we denote this constant by $\operatorname{rank}_{\mathcal{D}_\infty}X.$ The $B$-pair $W(\delta)$ is a certain sub $B$-pair of $W$ which plays a similar  (but not the same) role as $\operatorname{Fil}^1V$ in Berger's result. 
When $J$ contains all embeddings and $K_\infty/K$ is the cyclotomic extension, we can compare with Bergers result directly to obtain the following result.
\begin{introtheorem} (cf. Proposition \ref{prop:comparisonberger})
	Let $K_\infty/K$ be the cyclotomic extension, let $V$ be de Rham and let 
	$N_0$ be the number of Hodge--Tate weights of $V$ which are $<1.$
	\begin{enumerate}[(i)]
		\item The natural map 
		 $\mathcal{D}_\infty \otimes_\Lambda H^1_{\text{Iw},g}(V) \to H^1_{\text{Iw},g,\nabla}(V)$ has finite-dimensional kernel. 
		 \item 
		 The cokernel is finite-dimensional if and only if $[K:\QQ_p]\dim V - N_0= [K:\QQ_p]\dim\operatorname{Fil}^1V,$
		 if and only if $[K:\QQ_p]\dim\operatorname{Fil}^1V$ is equal to the number of HT weights $\geq 1.$
	\end{enumerate}
\end{introtheorem}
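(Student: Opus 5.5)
Both parts will be deduced from the first introductory theorem (Proposition \ref{prop:smoothrank}), applied with $J=\Hom(K,E)$, together with Berger's description of $H^1_{\text{Iw},g}(V)$ recalled above. Three ingredients are needed.

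First, the $\mathcal{D}_\infty$-rank of $H^1_{\text{Iw},g,\nabla}(V)$. When $J$ is all embeddings, $N=\sum_\sigma \dim_E H^0(G_K,W_{\mathrm{dR},\sigma}^+)=\dim_E (\Bdr^+\otimes V)^{G_K}$. For de Rham $V$ this equals the number $N_0$ of Hodge--Tate weights $<1$, with the normalisation in which the cyclotomic character has weight $1$. Hence $H^1_{\text{Iw},g,\nabla}(V)$ is coadmissible of rank $[K:\QQ_p]\dim V-N_0$. It is also, by part (2) of that theorem, the image of $\varprojlim H^1(K_n,W(\delta))$.

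Second, the source of the comparison map. Berger shows that $H^1_{\text{Iw}}(\operatorname{Fil}^1V)\to H^1_{\text{Iw},g}(V)$ is injective with finite-dimensional cokernel. By the cyclotomic Iwasawa theory of $\operatorname{Fil}^1V$ (Perrin-Riou), $H^1_{\text{Iw}}(\operatorname{Fil}^1V)$ has $\Lambda$-rank $[K:\QQ_p]\dim\operatorname{Fil}^1V$, with torsion equal to $H^0(K_\infty,\operatorname{Fil}^1V)$, which is finite-dimensional. So it suffices to study
$$\mathcal{D}_\infty\otimes_\Lambda H^1_{\text{Iw}}(\operatorname{Fil}^1 V)\to H^1_{\text{Iw},g,\nabla}(V).$$
Base change along $\Lambda\to\mathcal{D}_\infty$ is not flat, but the Tor terms are controlled by the $\Gamma_n$-coinvariants and invariants of finite-dimensional pieces. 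Up to finite-dimensional errors, the left-hand side is therefore $\varprojlim_n H^1(K_n,\operatorname{Fil}^1V)$. I would prove this by identifying $\mathcal{D}_\infty\otimes_\Lambda M$ with $\varprojlim_n M_{\Gamma_n}[1/p]$ for finitely generated $M$, and then using the descent sequence $H^1_{\text{Iw}}(V)_{\Gamma_n}\hookrightarrow H^1(K_n,V)$, whose cokernel is $H^2_{\text{Iw}}(V)^{\Gamma_n}$, also finite-dimensional and bounded.

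Third, injectivity modulo finite dimensions. $\operatorname{Fil}^1V$ is a subrepresentation, so $\operatorname{Fil}^1V\subset W(\delta)$ as $B$-pairs. The map $\varprojlim H^1(K_n,\operatorname{Fil}^1V)\to\varprojlim H^1(K_n,V)$ has kernel coming from $\varprojlim H^0(K_n,V/\operatorname{Fil}^1V)$. That is the inverse limit of finite-dimensional spaces under corestriction, and it stabilises to something finite-dimensional. This gives (i).

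For (ii), the comparison map is injective up to a finite-dimensional kernel. Its source and target are coadmissible $\mathcal{D}_\infty$-modules of ranks $[K:\QQ_p]\dim\operatorname{Fil}^1V$ and $[K:\QQ_p]\dim V-N_0$ respectively.

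If the cokernel is finite-dimensional, its fibres vanish at all but finitely many maximal ideals, so the two ranks agree. Conversely, if the ranks agree, the cokernel is coadmissible and generically of rank zero. Each fibre at a point of $\Gamma/\Gamma_n$ corresponds to a finite-level comparison
$$H^1(K_n,\operatorname{Fil}^1V)\otimes\eta\to H^1_g(K_n,V)\otimes\eta.$$
This is an injection between spaces whose dimensions, by Euler characteristic and Bloch--Kato dimension formulas, agree up to $H^0$ and $H^2$ terms. Those terms are nonzero for only finitely many characters $\eta$, so the cokernel is supported at finitely many points, i.e. it is finite-dimensional. The last reformulation is immediate: the number of Hodge--Tate weights $\ge1$, summed over embeddings, is $[K:\QQ_p]\dim V-N_0$.

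The main obstacle is the third ingredient together with the converse direction of (ii). One must control the non-flat base change and pass from generic rank equality to finite-dimensionality, which requires a uniform-in-$n$ finite-level dimension count. I would first try to get this from the Euler characteristic formula for $H^1_g$ in Bloch--Kato form, $\dim H^1_g=\dim H^0+\dim H^0(V^*(1))+\dim D_{\dR}/\operatorname{Fil}^0$, applied to the twists $V\otimes\eta$.
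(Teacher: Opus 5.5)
Your proposal is correct. For (ii) it is essentially the paper's argument: compare the generic ranks $[K:\QQ_p]\dim\operatorname{Fil}^1V$ (Berger plus the $\Lambda$-rank of $H^1_{\text{Iw}}(\operatorname{Fil}^1V)$) and $[K:\QQ_p]\dim V-N_0$ (Proposition \ref{prop:smoothrank} with $N=N_0$).

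For (i) you take a different route. The paper never passes through $\operatorname{Fil}^1V$. Using Proposition \ref{prop:injectivecokerfinite}, it reduces to the kernel of $\mathcal{D}_\infty\otimes_\Lambda H^1_{\text{Iw},g}(V)\to\mathcal{D}_\infty\otimes_\Lambda H^1_{\text{Iw}}(V)$. After shrinking $\Gamma$, $\Lambda[1/p]$ is a PID, so $X=H^1_{\text{Iw}}(V)/H^1_{\text{Iw},g}(V)$ is free plus finite-dimensional torsion. That kernel is therefore a quotient of the finite-dimensional $\operatorname{Tor}_1^{\mathcal{D}}(\mathcal{D}_\infty,\mathcal{D}\otimes_\Lambda X)$. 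This argument is purely algebraic and works for any $\Lambda$-submodule of $H^1_{\text{Iw}}(V)$; Berger's theorem enters only for the rank in (ii). Your route also works and pins down the kernel more explicitly, at the cost of needing Berger already in (i). It combines Berger's finite quotient, Proposition \ref{prop:injectivecokerfinite} applied to $\operatorname{Fil}^1V$, and the uniformly bounded kernels coming from $H^0(K_n,V/\operatorname{Fil}^1V)$.

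Two corrections, neither affecting the conclusion.

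First, the claim $\operatorname{Fil}^1V\subset W(\delta)$ is false. $\Bdr^+\otimes\operatorname{Fil}^1V$ is a direct summand of $\Bdr^+\otimes V$, so it is not contained in $t^{\kappa_\sigma}W^+_{\mathrm{dR},\sigma}$ unless $\operatorname{Fil}^1V=0$. You do not need it: the comparison map exists because $H^1_{\text{Iw},g}(V)$ maps into $\varprojlim H^1_g(K_n,V)$ by definition. Your kernel computation only uses the target $\varprojlim H^1(K_n,V)$.

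Second, the finite-level Bloch--Kato count in (ii) is unnecessary, and the formula you quote is off. The correction term in $\dim H^1_g$ is $\dim D_{\mathrm{cris}}(V^*(1))^{\varphi=1}$, not $\dim H^0(G_K,V^*(1))=\dim\bigl(D_{\mathrm{cris}}(V^*(1))^{\varphi=1}\cap\operatorname{Fil}^0\bigr)$. The step you flag as the main obstacle is in fact immediate. Since $\mathcal{D}_\infty\cong\prod_\rho E_\rho$, a $\mathcal{D}_\infty$-linear map of coadmissible modules is the product of its $\rho$-components, and its cokernel is the product of the componentwise cokernels. Equal ranks plus a finite-dimensional kernel force all but finitely many components of the cokernel to vanish. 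By Proposition \ref{prop:coad1} the cokernel is then finite-dimensional.
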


A special case of partial de Rham Selmer groups is the $K$-analytic situation, where we fix an embedding $K \to E$ and declare $W$ to be $K$-analytic if it is de Rham of weight $0$ at all embeddings except maybe the fixed one. 
If $K_\infty/K$ is the Lubin--Tate extension and $K=E$, the results of Berger \cite{Berger2016} (in the étale case) and Poyéton \cite{Poyeton2023-sw} in general (together with the observation \cite[Proposition 2.3.5]{steingart2025partial} that our notion of $K$-analytic coincides with the à priori weaker notion of Berger, requiring only Hodge--Tate of weight $0$) allows to attach to $W$ a $(\varphi,\Gamma)$-module $M=M(W)$ over the Robba-ring $\cR_K,$ with coefficients in $K.$ (We refer the reader to Section \ref{section:analytic} for precise definitions).
In the cyclotomic case there is a comparison isomorphism 

$$\mathcal{D}\otimes_{\Lambda}H^1_{\text{Iw}}(V) \cong M(V)^{\Psi=1}$$ due to Colmez (cf. \cite{colmez1998theorie}), where $\Psi$ denotes the canonical left-inverse operator of $\varphi$ and $\mathcal{D}$ is the algebra of locally $\QQ_p$-analytic distributions. 
Naively, one would like to replace $\mathcal{D}$ by $\mathcal{D}^K$ the algebra of $K$-analytic distributions to obtain a similar statement for the Lubin--Tate case, but it turns out, that the naive version of this result can not be true and it has proven to be difficult to show that the right-hand side, i.e., $M^{\Psi=1}$ is even a coadmissible module in the sense of Schneider--Teitelbaum (cf. \cite{schneider2003algebras}). 

The main result of the second part of the article is the following 
\begin{introtheorem}
Let $M$ be a $K$-analytic $(\varphi_K,\Gamma_K)$-module over the Robba ring $\cR_K.$ Then 
$M^{\Psi=1}$ is a coadmissible, finitely presented $\mathcal{D}^K(\Gamma_n,K)$-module of rank 
$$[\Gamma_K\colon\Gamma_n]\operatorname{rank}(M).$$ In particular, the complex 
$$M \xrightarrow{\Psi-1} M $$ is a perfect complex of $\mathcal{D}^{K}(\Gamma_K,K)$-modules.
\end{introtheorem}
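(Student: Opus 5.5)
Throughout, write $\Gamma=\Gamma_K$ and let $\Gamma_n\subseteq\Gamma$ be the open subgroups isomorphic to $o_K$ via the logarithm.

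The plan is to reduce coadmissibility of $M^{\Psi=1}$ to a structural statement about $M$ near the boundary of the open unit disc. We then count ranks fibrewise, using the partially de Rham Selmer groups from the first part.

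\textbf{Step 1: reduce to a finite-level module.} Choose $r$ large enough that $M$ is defined over $\cR_K^{[r,1)}$, i.e.\ $M=\cR_K\otimes M^{[r,1)}$. The first claim is that $M^{\Psi=1}$ is contained in $M^{[r,1)}$. Indeed, $\Psi$ contracts the radius of convergence: for $x\in M^{[s,1)}$ one has $\Psi(x)\in M^{[s',1)}$ with $s'<s$ as long as $s>r$. Iterating $x=\Psi(x)$ therefore pushes $x$ into $M^{[r,1)}$.

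\textbf{Step 2: make $\Gamma_n$ act through a Fr\'echet--Stein algebra.} Using the Fourier theory of Schneider--Teitelbaum, the character variety of $o_K$ is identified with the open disc over $\CC_p$, twisted by the period. The plan is to show that $M^{[r,1)}$, restricted to $\Gamma_n$ for $n\gg0$, is a coadmissible $\mathcal{D}^K(\Gamma_n,K)$-module. It should in fact be finite projective of rank $[\Gamma_K\colon\Gamma_n]\operatorname{rank}(M)$, after decomposing along $\Psi$ into the components $\varphi(M)$ and their $\Gamma$-translates, as in the Lubin--Tate analogue of Colmez's decomposition $\cR=\bigoplus_{a}(1+T)^a\varphi(\cR)$. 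This is the key input. I would try to deduce it from the analogous statement for $M=\cR_K$, where $\cR_K^{\Psi=0}\cong \mathcal{D}^K(\Gamma,K)\otimes$(rank one twist) is known (Berger--Fourquaux, Schneider--Venjakob). Then I would pass to general $M$ by the standard argument: $M^{\Psi=0}$ is free over $\mathcal{D}^K(\Gamma_n,K)$ of rank $[\Gamma_K\colon\Gamma_n]\operatorname{rank}M$, using a basis of $M^{[r,1)}$ and continuity of the $\Gamma_n$-action close to the identity.

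\textbf{Step 3: conclude coadmissibility and the rank.} The exact sequence
$$0\to M^{\Psi=1}\to M\xrightarrow{\Psi-1} M\to M/(\Psi-1)\to 0$$
can be compared with $M^{\Psi=0}$ via $1-\varphi\colon M^{\Psi=1}\to M^{\Psi=0}$. The kernel of this map is $M^{\varphi=1}$, which is finite-dimensional. Its cokernel is controlled by $M/(\Psi-1)$, which the analytic Herr complex shows is finite-dimensional. Since $M^{\Psi=0}$ is finitely generated projective and coadmissible, and coadmissible modules form an abelian category containing all finite-dimensional modules, $M^{\Psi=1}$ is coadmissible. Finite presentation follows because the kernel and cokernel are finite-dimensional, and $\mathcal{D}^K(\Gamma_n,K)$ is coherent of dimension one: it is a product of Bézout-type domains for $K$-analytic $\Gamma_n\cong o_K$. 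The rank is read off at generic fibres, where finite-dimensional modules do not contribute, giving $[\Gamma_K\colon\Gamma_n]\operatorname{rank}(M)$.

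\textbf{Step 4: perfectness.} Perfectness of $[M\xrightarrow{\Psi-1}M]$ over $\mathcal{D}^K(\Gamma_K,K)$ then follows. Its cohomology consists of the finitely presented module $M^{\Psi=1}$ and the finite-dimensional module $M/(\Psi-1)$. Over a coherent ring of finite global dimension locally, namely $\mathcal{D}^K(\Gamma_n,K)$ with $[\Gamma_K\colon\Gamma_n]$ finite, bounded complexes with finitely presented cohomology are perfect.

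\textbf{Main obstacle.} I expect Step 2 to be the hardest, specifically showing that $M^{\Psi=0}$ is finitely generated over $\mathcal{D}^K$. In the Lubin--Tate case the fibres of the character variety are not rigid discs over $K$. The remedy I would try first is to base change to $\CC_p$, or to the completed field over which the character variety becomes a disc. There I would prove freeness, then descend using Galois-invariance and the fact that coadmissibility descends along faithfully flat base change of Fréchet--Stein algebras.
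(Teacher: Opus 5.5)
Your Step 1 is fine; it is how the paper uses Lemma \ref{lem:fixed-vectors}. Steps 2 and 3, however, rest on two false claims, and together they would prove something false.

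\textbf{First false claim.} Neither $M^{[r,1)}$ nor $M^{\Psi=0}$ is finitely generated over $D=\mathcal{D}^K(\Gamma_n,F)$. The Lubin--Tate analogue of Colmez's decomposition makes $M^{\Psi=0}$ a finite projective module over the \emph{Robba ring of} $\Gamma_n$, i.e.\ functions on a boundary annulus of the character variety. This is \cite[Theorem 2.4.5]{STEINGART202424}, which the paper uses. By contrast, $\mathcal{D}^K(\Gamma_n,\CC_p)$ is the ring of functions on the whole open disc. The Berger--Fourquaux and Schneider--Venjakob results you have in mind concern the bounded part $(\cR^+)^{\Psi=0}$. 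Over $\CC_p$ the Robba ring of $\Gamma_n$ contains $D[1/Z]$, which is $Z$-divisible and hence lies in no finitely generated torsion-free module over the Pr\"ufer domain $D$.

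\textbf{Second false claim.} The cokernel of $1-\varphi\colon M^{\Psi=1}\to M^{\Psi=0}$ is not controlled by $M/(\Psi-1)$. If $y\in M^{\Psi=0}$ and $y=(1-\varphi)x$ with $x\in M$, then automatically $\Psi x=x$. So this cokernel is $M^{\Psi=0}/\bigl(M^{\Psi=0}\cap(1-\varphi)M\bigr)$, a piece of the $\varphi$-cokernel $M/(1-\varphi)M$, not of $M/(\Psi-1)M$. It is in fact infinite-dimensional: the heart $\mathcal{C}(M)$ is finitely generated over $D$ (that is what the theorem says), while $M^{\Psi=0}$ is not. The heart is a finitely generated $D$-submodule inside a module over the much larger Robba ring.

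\textbf{What the paper does instead.} Finiteness cannot be extracted from $M^{\Psi=0}$. The paper uses $M^{\Psi=0}$ only to see that the heart is torsion-free, hence flat over the Pr\"ufer domain $D$.
\begin{enumerate}
\item Coadmissibility over $\CC_p$ comes from identifying $M^{\Psi=1}$ with $H^1$ of $\mathbf{R}\varprojlim_\nu C_{an}(\mathbf{Dfm}_\nu(M))$, for the consistent system of deformations $M\widehat{\otimes}_K D_\nu$ (Proposition \ref{prop:DFMconsistent} and Theorem \ref{thm:psicoadmissible}).
\item Flatness, together with a uniform bound on the number of generators of $D_\nu\otimes_D\mathcal{C}(M)$, gives finite generation and projectivity of the heart (Lemma \ref{lem:finitepres}).
\item The result is descended from $\CC_p$ to $K$ by a Hahn--Banach retraction $\CC_p\to K$ (Theorem \ref{thm:galoisdescentadmissible}). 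Galois descent is avoided because it breaks down: $H^1(G_K,\CC_p)\neq 0$.
\end{enumerate}

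\textbf{The rank is a second, genuinely arithmetic gap.} Intrinsically one only gets $\operatorname{rank}_D\mathcal{C}(M)=-\chi_{an}(\Gamma_n,M)$ (Lemma \ref{lem:firstEPCformula}). The value $\chi_{an}(\Gamma_n,M)=-[\Gamma_K:\Gamma_n]\operatorname{rank}(M)$ comes from the Euler--Poincar\'e formula for partially de Rham Selmer groups of the attached $B$-pair, which rests on Tate's formula. It is transported between $K$ and $\CC_p$ by Lemma \ref{lem:anbasechange}. Your introduction mentions partial Selmer groups, but Step 3 reads the rank off the nonexistent $D$-rank of $M^{\Psi=0}$. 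Once that claim is removed, nothing in your argument determines the rank.

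Step 4 is fine once finite presentation is known. For the finite-dimensional $M/(\Psi-1)$ you still need the kernel of $D^d\to M/(\Psi-1)$ to be closed, so that \cite[Lemma 1.9]{Berger} applies.
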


The proof of this theorem is quite technical and we give an outline of the main steps.
First we show that the statement is true for $(\varphi_K,\Gamma_K)$-modules over the Robba ring $\cR_{\CC_p}.$ The technical foundations for this were laid out in \cite{steingart2024iwasawa}  and make use of Fourier Theory. Over $\CC_p$ the distribution algebra is isomorphic to the functions on an open unit disc and we can, by writing the unit disc as a union of closed discs, express it as a projective limit $\mathcal{D}^K = \varprojlim D_n$ of PIDs and one can construct a certain family $\mathbf{Dfm}_n = M\hat{\otimes}_K D_n$ of  $(\varphi_K,\Gamma_K)$-modules over $D_n,$ for which the analytic cohomology $C_{an}(\Psi \times \Gamma_n,\mathbf{Dfm}_n)$ forms a projective system of perfect complexes. It was shown in \cite{steingart2024iwasawa} that the latter has coadmissible cohomology and that for $M_{\CC_p} = \CC_p \hat{\otimes}_KM$, if $(M_{\CC_p})^{\Psi=1}$ is coadmissible, then the natural map $M_{\CC_p}^{\Psi=1} \to H^1(\mathbf{R}\lim C_{an}(\Psi \times \Gamma_n,\mathbf{Dfm}_n))$ is an isomorphism. With some more care it is in fact possible to show directly, that this map is an isomorphism which in turn forces $M_{\CC_p}^{\Psi=1}$ to be coadmissible (cf. Theorem \ref{thm:psicoadmissible}). It is at this stage also possible to see, that the rank of $M_{\CC_p}^{\Psi=1}$ is equal to the additive inverse of the Euler--Poincaré characteristic of the analytic cohomology (over $\CC_p$), but it is not clear that  the value has to be $-\operatorname{rank}(M)!$ This is where Partial Selmer groups come in as a key input. We would like to remark that a similar thing happens already in the cyclotomic case. For example, \cite{KPX} rely on reducing to the étale case to determine the rank of $M^{\Psi=1}$ from Tate's formula for the Euler--Poincaré characteristic, while e.g. Herr in \cite{herr1998cohomologie} can easily show that the Euler--Poincaré characteristic of the Herr complex is equal to the rank of $M^{\Psi=1}$ but then requires substantially more work to compute the latter rank (independently of the results of Tate).
Using partial Selmer groups it was shown in \cite{steingart2025partial} that for a $(\varphi_K,\Gamma_K)$-module $M_K$ over $\cR_K$ the Euler--Characteristic of its analytic cohomology is $-\operatorname{rank}(M_K).$
Using a ``faithfully flat descent'' argument we show that $M_K^{\Psi=1}$ is coadmissible (and finitely presented) as a consequence of the finite presentability of $\CC_p \hat{\otimes}_KM^{\Psi=1}.$
A naive Galois-descent argument runs into problems because already $H^1(G_K,\CC_p)\neq0.$ Instead our descent argument uses only functional analysis. 
The coadmissibility of $M_K^{\Psi=1}$ allows us to show a base change result which implies that the Euler--Poincaré characteristics of the analytic cohomology of $M_K$ and $M_{\CC_p}$ coincide. 
We go on to show, that if $M = M(W)$ and $J$ contains all embeddings except the identity, then the natural map 
$$M^{{\Psi=1}}/\nabla \to \varprojlim_{n}H^1_{g,J}(K_n,W) = H^1_{\text{Iw},g,J,\nabla}(W)$$ is injective and has finite-dimensional cokernel, which gives another description of the smooth partial Selmer groups considered in the first part. 
Finally, we revisit a comparison map, which is essential in applications in the work of  Manji (cf. \cite{manji2024iwasawa}) on an analytic Main conjecture, which in turn were building on some conjectural results of \cite{steingart2024iwasawa} , which can now be proven unconditionally.  
If $V^*(1)$ is a $K$-analytic Galois representation, then one can construct a natural map 
$$\text{comp} \colon H^1_{\text{Iw}}(V) \to M^{\Psi=1},$$ where $M = \operatorname{Hom}_{\cR_K}(M(V^*(1)),\cR_K(\chi_C))$ is the analytic $(\varphi,\Gamma)$-module dual (with respect to the residue pairing) to $M(V^*(1)),$ where $\cR_K(\chi_C)$ denotes the (non-étale if $K \neq \QQ_p$) rank one module attached to the character $x\abs{x}.$  We show that the image of this map generates $M^{\Psi=1}$ as a module over the distribution algebra. 
\section*{Acknowledgements}

	 We are greatly indebted to Muhammad Manji for countless discussions over the years. His applications of conjectural results from the second named author's thesis have been a constant motivation to revisit the topic. 
	  The research was supported by the Deutsche Forschungsgemeinschaft (DFG) through the Collaborative
Research Centre TRR 326 \textit{Geometry and Arithmetic of Uniformized Structures} project number
444845124. 
\section*{AI Disclosure}
	 The article is written by the authors without the use of AI tools for its mathematical content. The authors have used AI tools for spellchecking and copy-editing.
\section{General Notation and preliminaries}
We fix $K /\QQ_p$ finite and $E$ a finite extension of $\QQ_p$ containing a normal closure of $K.$ Let $G_K := \Gal(\bar{K}/K)$ be the absolute Galois group of $K.$ We fix $K_\infty = \bigcup K_n$ a $p$-adic Lie extension of $K$ with Galois group $\Gamma_K := \Gal(K_\infty/K)$ and where $K_n/K$ is Galois with finite Galois group $Q_n.$ We write $\Gamma_n:= \Gal(K_\infty/K_n).$ Implicitly we view all fields as subfields of the completion $\CC_p$ of an algebraic closure of $\QQ_p.$
\[\begin{tikzcd}
	& {\mathbb{C}_p} \\
	{K_\infty} \\
	{K_n} & E \\
	K \\
	{\mathbb{Q}_p}
	\arrow[no head, from=2-1, to=1-2]
	\arrow["{\Gamma_n}", no head, from=2-1, to=3-1]
	\arrow["{\Gamma_K}"', bend right=40, no head, from=2-1, to=4-1]
	\arrow["{Q_n}", no head, from=3-1, to=4-1]
	\arrow[no head, from=3-2, to=1-2]
	\arrow["{\text{finite}}", no head, from=3-2, to=4-1]
	\arrow["{\text{finite}}", no head, from=5-1, to=4-1]
\end{tikzcd}\]
For the moment, we will not assume that $\Gamma_K$ carries the structure of $K$-analytic Lie group. 
\subsection{Iwasawa and distribution algebras}
For any compact $p$-adic Lie group $G$ we denote by $\mathcal{D}(G,E)$ the algebra of $E$-valued locally analytic distributions on $G$ and, if $G$ is a $K$-analytic group we denote by $\mathcal{D}^K(G,E)$ the algebra of $E$-valued $K$-analytic distributions on $G.$ Recall, that $\mathcal{D}(G,E)$ is a Fréchet--Stein algebra in the sense of \cite{schneiderteitlbaumlocallyanalytic} meaning that it can be written as a projective limit of Banach algebras $\mathcal{D}(G,E) = \varprojlim_n \mathcal{D}_n(G,E)$ such that the transition maps $\mathcal{D}_{n+1}(G,E) \to \mathcal{D}_n(G,E)$ are flat. A $\mathcal{D}(G,E)$-module $M$ is called coadmissible if it is of the form $M = \varprojlim_n M_n$ where $M_n$ is a finitely generated $\mathcal{D}_n(G,E)$-module and the natural map $\mathcal{D}_n(G,E) \otimes_{\mathcal{D}_{n+1}(G,E)} M_{n+1} \to M_n$ is an isomorphism.
The algebra $\mathcal{D}(G,E)$ contains the Iwasawa algebra $\Lambda(G) := o_E\llbracket G \rrbracket$ and it turns out that $E \otimes_{o_E}\Lambda(G)$ is dense in $\mathcal{D}(G,E)$ and the ring extension is faithfully flat (cf. \cite[Lemma 3.1]{schneiderteitlbaumlocallyanalytic} and \cite[Theorem 5.2]{schneider2003algebras}).
\begin{lem} \label{lem:smoothdistr} Let $G$ be a compact $p$-adic Lie group 
	Let $J \subset \mathcal{D}=\mathcal{D}^{\QQ_p}(G,E)$ be the ideal generated by the Lie Algebra of $G,$ then the natural map 
	$$ \mathcal{D}_\infty:=\mathcal{D}/J \to \varprojlim_{H \triangleleft_o G} E[G/H],$$ where the limit is taken over all open normal subgroups $H$ of $G$ is an isomorphism.
\end{lem}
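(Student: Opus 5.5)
Throughout, write $\mathfrak g$ for the Lie algebra of $G$, to avoid a clash with the ideal $J$. The strategy is to reduce to a uniform open normal subgroup and then identify $\mathcal{D}/J$ with a projective limit of group algebras via the Fréchet--Stein structure.

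First fix an open normal uniform pro-$p$ subgroup $G_0 \subseteq G$. Then $\mathcal{D}(G,E)=\bigoplus_{g\in G/G_0}\delta_g\mathcal{D}(G_0,E)$ is free of finite rank over $\mathcal{D}(G_0,E)$. The Lie algebra of $G_0$ equals $\mathfrak g$, and $J$ is a two-sided ideal, since $\mathfrak g$ is stable under conjugation by $\delta_g$. Hence $\mathcal{D}/J=\bigoplus_g \delta_g\,\mathcal{D}(G_0)/J_0$, where $J_0$ is the ideal of $\mathcal{D}(G_0,E)$ generated by $\mathfrak g$. On the other side, the open normal subgroups $H\subseteq G_0$ that are normal in $G$ are cofinal, and $E[G/H]=\bigoplus_g\delta_g E[G_0/H]$. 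So it suffices to treat the case $G=G_0$ uniform, with the cofinal family $H=G^{p^m}$ of open normal subgroups.

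\textbf{Surjectivity and continuity.} The map $\mathcal{D}\to E[G/H]$ is pushforward of distributions along $G\to G/H$. It kills $\mathfrak g$, because locally constant functions are annihilated by derivations, so it factors through $\mathcal{D}/J$. Each such map is surjective, since the Dirac distributions already surject.

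\textbf{Dual description of $J$.} The dual of $\mathcal{D}$ is $C^{an}(G,E)$. The annihilator of $J$ in $C^{an}(G,E)$ consists of the analytic $f$ with $(\mathfrak g\cdot \mu)(f)=0$ for all $\mu$. This is equivalent to all left derivatives of $f$ vanishing, i.e.\ $f$ is locally constant. Since $G$ is compact, such $f$ is constant on cosets of some $H$, so $J^\perp=C^\infty(G,E)=\varinjlim_H E[G/H]^*$. This shows the closure $\bar J$ is the kernel of $\mathcal{D}\to\varprojlim_H E[G/H]$, and that the image of this map is dense.

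\textbf{Main obstacle: $J$ is closed and the map is onto the limit.} Let $\mathcal{D}=\varprojlim_r\mathcal{D}_r$ be the Fréchet--Stein presentation. Set $J_r:=\mathcal{D}_r\mathfrak g$, a finitely generated left ideal of the noetherian Banach algebra $\mathcal{D}_r$, hence closed. By flatness of $\mathcal{D}_r\to\mathcal{D}_{r'}$, the system $(\mathcal{D}_r/J_r)_r$ is a coadmissible module. Coadmissible modules have closed submodules, and $\varprojlim_r$ of coadmissible systems is exact. Hence $\mathcal{D}/J=\varprojlim_r \mathcal{D}_r/J_r$, and $J$ is closed, so $J=\bar J$.

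It remains to identify $\mathcal{D}_r/J_r$. Using the Schneider--Teitelbaum description of $\mathcal{D}_r(G)$ as a crossed product of $\mathcal{D}_r(G^{p^m})$ with $G/G^{p^m}$ for $m$ depending on $r$, the quotient becomes $\mathcal{D}_r(G^{p^m})/(\mathfrak g)\otimes E[G/G^{p^m}]$. One then checks that $\mathcal{D}_r(G^{p^m})/(\mathfrak g)=E$ in the relevant radius range, by a Mahler/Amice-type expansion in $b_i=g_i-1$ with $\log(1+b_i)\in\mathfrak g$. This gives $\mathcal{D}_r/J_r\cong E[G/G^{p^{m(r)}}]$. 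Passing to the limit over $r$ yields the isomorphism.

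The delicate point is this last computation: that for a single $r$, killing the $\log(g_i)$ leaves exactly the finite group algebra. I would first check it for $G=\mathbb{Z}_p$, where $\mathcal{D}_r$ is functions on a closed disc and $\log(1+T)$ has finitely many zeros $\zeta-1$ there. In that case the quotient is $\prod_\zeta E$, which equals $E[\mathbb{Z}_p/p^m]$ after extending scalars. The general uniform case should then follow via the Lazard coordinates.
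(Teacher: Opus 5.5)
The paper states this lemma without proof, so I assess your argument on its own. The reduction to a uniform open normal subgroup is fine. So is the computation $J^\perp=C^\infty(G,E)$, which gives $\bar J=\ker(\mathcal{D}\to\varprojlim_H E[G/H])$ and a dense image. So is closedness of $J$, as a finitely generated, hence coadmissible, submodule of $\mathcal{D}$. (Only finitely generated submodules of coadmissible modules are closed, not all of them, but that is all you use.) Together these prove injectivity of $\mathcal{D}/J\to\varprojlim_H E[G/H]$.

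The gap is surjectivity onto the limit, which you rest entirely on $\mathcal{D}_r/J_r\cong E[G/G^{p^{m(r)}}]$. For $G\cong\ZZ_p^d$, where $\mathcal{D}_r$ is an affinoid polydisc algebra, your $\ZZ_p$ computation goes through. For non-commutative uniform $G$ the step is not established, and read literally it is false. Take $\mathcal{D}_r(G^{p^m})$ to be the Schneider--Teitelbaum completion of $G^{p^m}$ in its own coordinates at the same radius $r$. Then $\mathcal{D}_r(G)$ is not the crossed product of it with $G/G^{p^m}$. Also $\mathcal{D}_r(G^{p^m})/(\mathfrak g)\neq E$ once $r\ge p^{-1/(p-1)}$; already for $\ZZ_p$ the zeros $\zeta_p-1$ of $\log(1+T')$ survive.

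What you need is that the closure of $\mathcal{D}(G^{p^m})$ in $\mathcal{D}_r(G)$ is the completion in the variables $b'_i=g_i^{p^m}-1$ at the small radius $r^{p^m}$. Concretely, the monomials $b^\beta(b')^\alpha$ with $0\le\beta_i<p^m$ must be orthogonal for $\norm{\cdot}_r$, with $\norm{b'_i}_r=r^{p^m}$, for $r$ in the window $p^{-1/(p^{m-1}(p-1))}\le r<p^{-1/(p^m(p-1))}$ (for $p$ odd). Only then is $\norm{b'_i}_r<p^{-1/(p-1)}$. That makes $\log(1+b'_i)=b'_iu_i$ with $u_i$ a unit, so the left ideal generated by $\mathfrak g$ is the augmentation ideal of that closure. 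This norm comparison is provable, e.g. via the graded ring of $\mathcal{D}_r(G)$. But it is the real content of your last step, and ``should follow via the Lazard coordinates'' does not supply it.

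You can avoid it altogether by finishing your own duality argument.
\begin{itemize}
\item An element of $\varprojlim_H E[G/H]$ is exactly an arbitrary linear form $\lambda$ on $C^\infty(G,E)=\varinjlim_H C(G/H,E)$. You need $\lambda=\mu|_{C^\infty(G,E)}$ for some $\mu\in\mathcal{D}=C^{an}(G,E)'$.
\item Write $C^{an}(G,E)=\varinjlim_n V_n$ as a space of compact type, with $V_n$ the Banach space of functions given by one convergent power series on each chart of the $n$-th covering.
\item The closed subspace $C^\infty(G,E)=J^\perp$ carries the inductive limit topology of the $C^\infty(G,E)\cap V_n$. This is the standard fact that closed subspaces of compact type spaces are of compact type with the induced defining system.
\item Each $C^\infty(G,E)\cap V_n$ is finite-dimensional, since a locally constant function given by a single power series on a chart is constant there.
\item Hence $C^\infty(G,E)$ has the finest locally convex topology, so $\lambda$ is continuous. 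Hahn--Banach ($E$ is spherically complete) extends it to the required $\mu$.
\end{itemize}
Combined with your closedness of $J$, this proves the lemma for every compact $G$ and every $p$. It needs no reduction to uniform groups and no computation in the Banach algebras $\mathcal{D}_r$.
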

\begin{lem} \label{lem:projectivelimitofgrouprings} Suppose $\Gamma$ is abelian. Let $\mathcal{D}_\infty= \varprojlim E[\Gamma/\Gamma_n]$ and for an irreducible representation $\rho \colon \Gamma \to \CC_p^\times$ with open kernel we denote by $E_\rho$ the residue field of $\mathcal{D}_\infty$ at $\rho$ (viewed as a maximal ideal). The natural map 
	$$\mathcal{D}_\infty \cong \prod_{\rho} E_\rho$$ is an isomorphism, where $\rho$ runs through all conjugacy classes of irreducible representations ( i.e. characters \footnote{Here we allow characters valued in extensions of $E.$}) with open kernel. For any (finite dimensional) $E$-vector space $V$ with continuous and $E$-linear $G_K$-action
	this induces an isomorphism 
	$$\varprojlim H^1(K,E[\Gamma/\Gamma_n] \otimes V) \cong H^1(K,\mathcal{D}_\infty \otimes V) \cong \prod_{\rho}H^1(K,\rho \otimes V) ,$$ where $\rho$ is the one dimensional $E_\rho$-vector space with $E_\rho$-linear action of $G_K$ via $\rho$ pre-composed with the projection onto $\Gamma$.
\end{lem}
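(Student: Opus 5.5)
The plan is to get the product decomposition of $\mathcal{D}_\infty$ from the finite levels and then pass it through Galois cohomology. We treat the two isomorphisms separately.

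\textbf{Step 1: finite levels.} Since $\Gamma$ is abelian and $\Gamma/\Gamma_n$ is a finite abelian group, Maschke's theorem (characteristic $0$) gives a canonical isomorphism
$$E[\Gamma/\Gamma_n] \cong \prod_{\rho} E_\rho.$$
Here $\rho$ runs over the $\operatorname{Gal}(\bar E/E)$-orbits of characters of $\Gamma/\Gamma_n$, $E_\rho = E(\rho)$ is the field generated by the values of $\rho$, and the map is $[g]\mapsto (\rho(g))_\rho$. These are exactly the conjugacy classes in the statement.

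For $m\ge n$ the transition map $E[\Gamma/\Gamma_m]\to E[\Gamma/\Gamma_n]$ is compatible with these decompositions. It is the projection onto those factors $\rho$ that factor through $\Gamma/\Gamma_n$. Passing to the limit identifies $\varprojlim_n E[\Gamma/\Gamma_n]$ with the product over all characters with open kernel: every such character factors through some $\Gamma/\Gamma_n$, because the $\Gamma_n$ are cofinal among open subgroups. The factors $E_\rho$ are then the residue fields at the maximal ideals $\ker(\mathcal{D}_\infty\to E_\rho)$, which settles the first assertion.

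\textbf{Step 2: the cohomology.} Put the product topology on $\mathcal{D}_\infty$, i.e.\ the limit topology, with $G_K$ acting through $\Gamma$ by multiplication. Then $\mathcal{D}_\infty\otimes_E V \cong \varprojlim_n E[\Gamma/\Gamma_n]\otimes_E V \cong \prod_\rho E_\rho\otimes_E V$ as topological $G_K$-modules, because $V$ is finite dimensional. Under this identification the factor $E_\rho\otimes_E V$ is $\rho\otimes V$.

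Continuous cochains into a product are products of continuous cochains, so $C^\bullet(G_K,\prod_\rho W_\rho)=\prod_\rho C^\bullet(G_K,W_\rho)$. Products are exact, so cohomology commutes with the product. This gives the second isomorphism, $H^1(K,\mathcal{D}_\infty\otimes V)\cong\prod_\rho H^1(K,\rho\otimes V)$.

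For the first isomorphism, note that $C^\bullet(G_K,\varprojlim_n X_n)=\varprojlim_n C^\bullet(G_K,X_n)$ with $X_n=E[\Gamma/\Gamma_n]\otimes V$. The transition maps $X_{n+1}\to X_n$ are split surjections, namely projections onto a subset of the factors, and they are compatible with the $G_K$-action. The splittings are $G_K$-equivariant, so the transition maps $C^\bullet(G_K,X_{n+1})\to C^\bullet(G_K,X_n)$ are surjective. Hence the Milnor sequence shows that the only obstruction is a $\varprojlim^1$ term of $H^0(K,X_n)$.

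That $\varprojlim^1$ vanishes, because $H^0(K,X_{n+1})\to H^0(K,X_n)$ is again a split surjection coming from the product decomposition, so the system is Mittag-Leffler. Even more directly, $\varprojlim_n H^1(K,X_n)=\varprojlim_n\prod_{\rho \text{ through } \Gamma/\Gamma_n}H^1(K,\rho\otimes V)=\prod_\rho H^1(K,\rho\otimes V)$, which avoids any $\varprojlim^1$ issue. I would present it this way.

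The only delicate point is the topological one: we must check that the limit topology on $\mathcal{D}_\infty\otimes V$ agrees with the product topology, so that continuous cochains really decompose as stated. This is automatic, since each $E_\rho\otimes V$ is finite dimensional and the decomposition at every finite level is a homeomorphism. I expect no real obstacle beyond this.
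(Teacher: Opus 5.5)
Your proposal is correct and follows the same route as the paper. Maschke's theorem gives the decomposition of $E[\Gamma/\Gamma_n]$ into a product of fields at each finite level; these decompositions are compatible with the transition maps, so one can pass to the limit; and finite-dimensionality of $V$ lets you commute the limit with the tensor product. Your explicit check that continuous cohomology commutes with the resulting product, using the equivariantly split surjective transition maps, fills in a step the paper leaves implicit.
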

\begin{proof}
	Since $\operatorname{char}(E)=0$ we have that $E[\Gamma/\Gamma_n]$ is semisimple by Maschke's Theorem. Because $\Gamma$ is abelian, it follows that every irreducible representation is a character and appears with multiplicity one in the Artin--Wedderburn decomposition $E[\Gamma/\Gamma_n] = \prod_{\rho} M_{d_\rho}(\operatorname{End}(\rho)^{op}).$ In particular $E_\rho=\operatorname{End}(\rho)$ is indeed a field. For the second part use that $V$ is finite dimensional to commute the limit with the tensor product. 
\end{proof}
\begin{prop}\label{prop:coad1} Suppose $\Gamma$ is abelian.
	Let $e_\rho$ be the idempotent corresponding to $\rho$ in the decomposition of $\mathcal{D}_\infty.$ 
	A $\mathcal{D}_\infty$-module $X$ is coadmissible if and only if 
	\begin{enumerate}
		\item $e_\rho X$ is finite dimensional for all $\rho$
		\item The natural map $X \to \prod_{\rho} e_\rho X$ is an isomorphism
	\end{enumerate}
\end{prop}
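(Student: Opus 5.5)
We have to be explicit about which Fréchet--Stein structure on $\mathcal{D}_\infty$ is used, and we take the natural one. Enumerate the characters $\rho$ with open kernel (up to conjugacy) and let $\Gamma_n$ be the defining open subgroups. Set
$$\mathcal{D}_{\infty,n}:=E[\Gamma/\Gamma_n]\cong\prod_{\rho \text{ trivial on }\Gamma_n}E_\rho .$$
By Lemma \ref{lem:projectivelimitofgrouprings}, $\mathcal{D}_\infty=\varprojlim_n \mathcal{D}_{\infty,n}$. The transition maps $\mathcal{D}_{\infty,n+1}\to\mathcal{D}_{\infty,n}$ are projections onto a product of some of the factors, so they are flat (and surjective). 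Each $\mathcal{D}_{\infty,n}$ is a finite-dimensional, hence Banach, $E$-algebra. This is compatible with $\mathcal{D}_\infty$ being the quotient of the Fréchet--Stein algebra $\mathcal{D}$ by a closed ideal, as in Lemma \ref{lem:smoothdistr}; if coadmissibility is meant with respect to the quotient structure, one checks the two notions agree because both presentations are cofinal systems of quotients. Let $e_n:=\sum_{\rho|_{\Gamma_n}=1}e_\rho$ be the idempotent cutting out $\mathcal{D}_{\infty,n}$. Then $\mathcal{D}_{\infty,n}=e_n\mathcal{D}_\infty$, and for any $\mathcal{D}_\infty$-module $X$ we have $\mathcal{D}_{\infty,n}\otimes_{\mathcal{D}_\infty}X=e_nX=\bigoplus_{\rho|_{\Gamma_n}=1}e_\rho X$, a finite sum.

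For ($\Rightarrow$), write $X=\varprojlim X_n$ with $X_n$ finitely generated over $\mathcal{D}_{\infty,n}$ and $\mathcal{D}_{\infty,n}\otimes X_{n+1}\cong X_n$. Since $\mathcal{D}_{\infty,n}$ is finite-dimensional, each $X_n$ is finite-dimensional and decomposes as $X_n=\bigoplus_{\rho|_{\Gamma_n}=1}e_\rho X_n$. The compatibility condition says $e_\rho X_{n+1}\cong e_\rho X_n$ whenever $\rho$ is trivial on $\Gamma_n$. Hence $e_\rho X_m$ is independent of $m\ge n(\rho)$; call it $X_\rho$, which is finite-dimensional. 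Since $e_\rho$ is continuous and idempotents commute with the limit, $e_\rho X=\varprojlim_m e_\rho X_m=X_\rho$, which gives (1). For (2),
$$X=\varprojlim_n \bigoplus_{\rho|_{\Gamma_n}=1}X_\rho=\prod_\rho X_\rho=\prod_\rho e_\rho X,$$
because the inverse limit of the finite partial products along projections is the full product. One must check that this identification is the natural map $x\mapsto(e_\rho x)_\rho$; this holds since the projection to $X_n$ followed by $e_\rho$ equals $e_\rho$ applied to $x$.

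For ($\Leftarrow$), set $X_n:=\bigoplus_{\rho|_{\Gamma_n}=1}e_\rho X$. Each $X_n$ is finite-dimensional by (1), hence finitely generated over $\mathcal{D}_{\infty,n}$. The map $\mathcal{D}_{\infty,n}\otimes X_{n+1}\to X_n$ is the projection onto the factors with $\rho|_{\Gamma_n}=1$, so it is an isomorphism. Moreover $\varprojlim X_n=\prod_\rho e_\rho X\cong X$ by (2), and the module structures are compatible because $\mathcal{D}_\infty=\prod E_\rho$ acts factorwise. So $X$ is coadmissible.

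The main obstacle is only bookkeeping: making sure the Fréchet--Stein presentation used for coadmissibility is this one, or is equivalent to it. If coadmissibility is meant via $\mathcal{D}\to\mathcal{D}_\infty$, we would use that coadmissible $\mathcal{D}$-modules killed by $J$ correspond to the above, since the quotient of each $\mathcal{D}_m(G,E)$ by $J$ is one of the finite products $\prod_{\rho \in S_m} E_\rho$. This is checked via Lemma \ref{lem:smoothdistr} together with a cofinality argument on the index sets $S_m$.
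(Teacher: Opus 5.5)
Your proposal is correct and is essentially the paper's argument: decompose each finitely generated $E[\Gamma/\Gamma_n]$-module into its finitely many $e_\rho$-components, use the base-change isomorphisms to see that these components stabilise, so that the limit is $\prod_\rho e_\rho X$, and conversely take $X_n$ to be the finite partial products $\bigoplus_{\rho|_{\Gamma_n}=1}e_\rho X$. Your extra bookkeeping goes slightly beyond the paper, which simply works with the presentation $\mathcal{D}_\infty=\varprojlim_n E[\Gamma/\Gamma_n]$ throughout; this includes checking that the identification is the natural map $x\mapsto (e_\rho x)_\rho$, and the remark comparing with the quotient Fréchet--Stein structure coming from $\mathcal{D}$.
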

\begin{proof}
Let $\mathcal{D}_n := E[\Gamma/\Gamma_n]$. By Lemma \ref{lem:projectivelimitofgrouprings}, $\mathcal{D}_n \cong \prod_{\rho} E_\rho$ indexed over characters $\rho$ factoring through $\Gamma/\Gamma_n$. 
Any finitely generated $\mathcal{D}_n$-module $X_n$ thus decomposes into finite-dimensional $E_\rho$-vector spaces as $X_n \cong \prod_{\rho} e_\rho X_n$. 
If $X \cong \varprojlim_n X_n$ is a coadmissible $\mathcal{D}_\infty$-module, the structural isomorphisms $\mathcal{D}_n \otimes_{\mathcal{D}_{n+1}} X_{n+1} \cong X_n$ 
force $e_\rho X_{n+1} \cong e_\rho X_n$ for all $\rho$ factoring through $\Gamma/\Gamma_n$. 
The inverse limit therefore yields $X \cong \prod_\rho e_\rho X$ with each component finite-dimensional, satisfying (1) and (2). 
Conversely, assuming (1) and (2), define $X_n := \prod_{\rho} e_\rho X$ restricted to characters factoring through $\Gamma/\Gamma_n$. 
Because this product is finite and each $e_\rho X$ is finite-dimensional, $X_n$ is finitely generated over $\mathcal{D}_n$. The natural projections $X_{n+1} \to X_n$ 
induce $\mathcal{D}_n \otimes_{\mathcal{D}_{n+1}} X_{n+1} \cong X_n$, exhibiting $X \cong \varprojlim_n X_n$ as a coadmissible $\mathcal{D}_\infty$-module.
\end{proof}
\begin{defn}
A coadmissible $\mathcal{D}_\infty$-module $X$ is called \textbf{good} if the dimension $\dim_{E_\rho} e_\rho X$ is constant for all but finitely many $\rho.$
\end{defn}
\begin{lem} \label{lem:goodexactsequence} The following hold. 
	\begin{enumerate}
	\item Finite $E$-dimensional $\mathcal{D}_\infty$-modules are good.
	\item Let $0 \to X' \to X \to X'' \to 0$ be an exact sequence of coadmissible $\mathcal{D}_\infty$-modules. Then if two out of the three modules are good, so is the third.
	\end{enumerate}
\end{lem}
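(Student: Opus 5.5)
The plan is to reduce everything to the componentwise description of coadmissible modules given by Proposition \ref{prop:coad1}. By that result, a coadmissible $\mathcal{D}_\infty$-module $X$ is determined by the family of finite-dimensional $E_\rho$-vector spaces $e_\rho X$. Moreover, $X \cong \prod_\rho e_\rho X$, where $\rho$ runs over the (conjugacy classes of) characters of $\Gamma$ with open kernel, as in Lemma \ref{lem:projectivelimitofgrouprings}. Goodness is then a condition on the function $\rho \mapsto \dim_{E_\rho} e_\rho X$ alone, and both parts become statements about these dimension functions.

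For (1), let $X$ be finite-dimensional over $E$. Since $X \cong \prod_\rho e_\rho X$, and each nonzero factor contributes at least $1 \leq [E_\rho:E]\dim_{E_\rho} e_\rho X$ to $\dim_E X$, only finitely many $e_\rho X$ can be nonzero. Before this, I need $X$ to be coadmissible. I would check that $X$ is a module over some $E[\Gamma/\Gamma_n]$: the image of $\mathcal{D}_\infty \to \operatorname{End}_E(X)$ is finite-dimensional, hence closed and Artinian, and so it factors through some finite product of the $E_\rho$. Alternatively, I would take the statement implicitly to concern coadmissible modules, in which case Proposition \ref{prop:coad1} applies directly. In either case $\dim e_\rho X = 0$ for all but finitely many $\rho$, so $X$ is good of rank $0$.

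For (2), the key step is that applying $e_\rho$ is exact: $e_\rho$ is an idempotent of the commutative ring $\mathcal{D}_\infty$, so $X \mapsto e_\rho X = e_\rho\mathcal{D}_\infty \otimes_{\mathcal{D}_\infty} X$ is a direct summand of the identity functor and preserves short exact sequences. Hence for every $\rho$ we obtain an exact sequence of finite-dimensional $E_\rho$-vector spaces
\[0 \to e_\rho X' \to e_\rho X \to e_\rho X'' \to 0,\]
and therefore $\dim e_\rho X = \dim e_\rho X' + \dim e_\rho X''$ for all $\rho$. Suppose two of the three dimension functions are constant outside finite sets $S_1, S_2$ of characters. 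Then outside $S_1 \cup S_2$ the third is the sum or difference of two constants, and hence is also constant. Coadmissibility of all three modules is part of the hypothesis, so goodness of the third follows, with ranks additive.

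I expect no real obstacle. The only point needing care is in (1), namely ensuring that a finite-dimensional module is automatically coadmissible, that is, that it satisfies condition (2) of Proposition \ref{prop:coad1}. I would handle this with the factorization through a finite quotient $E[\Gamma/\Gamma_n]$ described above.
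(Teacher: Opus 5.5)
Your argument is correct and is essentially the paper's: part (1) is regarded there as clear (for coadmissible finite-dimensional modules), and part (2) is deduced, as you do, from the exactness of $X \mapsto e_\rho X$, which the paper phrases as flatness of $E_\rho$ over $\mathcal{D}_\infty$. The only soft spot is your aside that an abstract finite-dimensional module automatically factors through finitely many $E_\rho$: being Artinian alone does not give this (one must also rule out finite-dimensional residue fields of $\prod_\rho E_\rho$ at non-principal ultrafilters, or assume the action is continuous, so that the kernel is a closed ideal of finite codimension and hence of the form $\prod_{\rho\notin S}E_\rho$ with $S$ finite), but your alternative reading, with the modules taken to be coadmissible, is what the paper intends.
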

\begin{proof}
	The first point is clear. 
For the second point use that $E_\rho$ is flat as a $\mathcal{D}_\infty$-module, hence, the induced sequences 
$0 \to e_\rho X' \to e_\rho X \to e_\rho X'' \to 0$ remain exact. 
\end{proof}
Henceforth and for the remainder of the article we will assume that $\Gamma$ is abelian.
\begin{prop} \label{prop:goodcoad}
Let $X = \prod_{\rho}X_\rho$ be a coadmissible $\mathcal{D}_\infty$-module. The following are equivalent. 

\begin{enumerate}[(i)]
\item $X$ is good.
\item $ \liminf \dim_{E_\rho} X_\rho$ and $\limsup \dim_{E_\rho} X_\rho$ are finite and coincide.
\item There exists a map $\mathcal{D}_\infty^s \to X$ with finite-dimensional kernel and co-kernel.
\end{enumerate}
\end{prop}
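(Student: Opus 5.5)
We prove the cycle (i)$\Rightarrow$(ii)$\Rightarrow$(i) and (i)$\Leftrightarrow$(iii), working throughout with the decomposition $\mathcal{D}_\infty \cong \prod_\rho E_\rho$ of Lemma \ref{lem:projectivelimitofgrouprings} and the description $X \cong \prod_\rho e_\rho X$ of coadmissible modules from Proposition \ref{prop:coad1}. Write $d_\rho := \dim_{E_\rho} X_\rho$; these are non-negative integers indexed by the countable set of characters with open kernel.

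The equivalence (i)$\Leftrightarrow$(ii) is elementary. If $d_\rho = s$ for all but finitely many $\rho$, then $\liminf d_\rho = \limsup d_\rho = s$, since $\liminf$ and $\limsup$ over a countable index set ignore finitely many exceptions. Conversely, suppose both are finite and equal to some $s$. Because the $d_\rho$ are integers, the sequence of values is eventually squeezed into $[s-\tfrac12, s+\tfrac12]$, so $d_\rho = s$ outside a finite set.

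For (i)$\Rightarrow$(iii), let $s$ be the generic dimension and $S$ the finite exceptional set. For each $\rho \notin S$ choose an $E_\rho$-basis $x_{\rho,1},\dots,x_{\rho,s}$ of $X_\rho$, and for $\rho \in S$ set $x_{\rho,i}=0$. Define $\mathcal{D}_\infty^s = \prod_\rho E_\rho^s \to X = \prod_\rho X_\rho$ componentwise, sending the $i$-th basis vector at $\rho$ to $x_{\rho,i}$. This is well defined and $\mathcal{D}_\infty$-linear precisely because both sides are products over $\rho$ and the action is componentwise. Its kernel is $\prod_{\rho\in S}E_\rho^s$ and its cokernel is $\prod_{\rho \in S} X_\rho$; both are finite-dimensional, since $S$ is finite, each $E_\rho$ is finite over $E$, and each $X_\rho$ is finite-dimensional by Proposition \ref{prop:coad1}.

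For (iii)$\Rightarrow$(i), let $f\colon \mathcal{D}_\infty^s \to X$ have finite-dimensional kernel $A$ and cokernel $B$. The image $I$ is coadmissible; we expect this from the abelian category of coadmissible modules (Schneider--Teitelbaum), or directly because $e_\rho$ commutes with $f$, so $I = \prod_\rho f(e_\rho\mathcal{D}_\infty^s)$ has finite-dimensional components. This yields exact sequences $0\to A \to \mathcal{D}_\infty^s \to I \to 0$ and $0 \to I \to X \to B \to 0$ of coadmissible modules. Now $\mathcal{D}_\infty^s$ is good, with constant dimension $s$, and $A$, $B$ are good by Lemma \ref{lem:goodexactsequence}(1). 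Applying Lemma \ref{lem:goodexactsequence}(2) twice shows that first $I$ and then $X$ is good. More concretely, a finite-dimensional module has $e_\rho A = 0$ for all but finitely many $\rho$, so $d_\rho = s$ generically.

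The only point requiring care is that the images and kernels appearing in (iii) are again coadmissible, so that Lemma \ref{lem:goodexactsequence} applies. This is handled by the componentwise nature of all maps with respect to the idempotents $e_\rho$.
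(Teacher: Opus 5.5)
Your proof is correct and follows the paper's argument. For (i)$\Rightarrow$(iii) the paper likewise sends the standard basis of $\mathcal{D}_\infty^s$ to the ``diagonal'' elements $v_i=(v_i^\rho)_\rho$ built from generic bases, and reads off finite-dimensional kernel and cokernel via Proposition \ref{prop:coad1}; it then dismisses (i)$\Leftrightarrow$(ii) as clear and derives (iii)$\Rightarrow$(i) from Lemma \ref{lem:goodexactsequence}, where your componentwise check that the image is coadmissible makes explicit a step the paper leaves implicit.
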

\begin{proof} 
	The equivalence between \textit{(i)} and \textit{(ii)} is clear. The implication \textit{(iii)} implies \textit{(i)} is a direct consequence of \ref{lem:goodexactsequence}. We show that \textit{(i)} implies \textit{(iii)}.
Write $X = \prod_{\rho}X_\rho,$ by assumption there exist in each component $(v^{\rho}_1,\dots,v^{\rho}_s),$ which are an $E_\rho$-basis at all but finitely many $\rho.$ Let $Y \subseteq X$ be the $D_\infty$-span of the diagonal elements $v_i:=(v_i^\rho)_\rho.$ Since both $\mathcal{D}^s$ and $X$ are coadmissible, we can apply the characterisation of coadmissible $\mathcal{D}_\infty$-modules from \ref{prop:coad1} to see that the kernel and co-kernel of the induced map $\mathcal{D}^s \to Y \to X$ are finite dimensional. 
\end{proof}
\begin{defn}
Let $X$ be a good coadmissible $\mathcal{D}_\infty$-module. We define the $\operatorname{rank}_{\mathcal{D}_\infty}X$ to be the common value of $\dim_{E_\rho} e_\rho X$ for all but finitely many $\rho.$
\end{defn}
\begin{rem}
The proof of Proposition \ref{prop:goodcoad} shows that there exists a map $\mathcal{D}_\infty^s \to X$ with finite-dimensional kernel and co-kernel, where $s=\operatorname{rank}_{\mathcal{D}_\infty}X.$
\end{rem}

\begin{prop} \label{prop:DmodNablaLimit}
	Let $M$ be a finitely generated $\Lambda_E = o_E\llbracket \Gamma \rrbracket[1/p]$-module let $I_n$ be the kernel of the natural map $\Lambda \to E [\Gamma/\Gamma_n]$, then the natural map $$\mathcal{D}/\operatorname{Lie}(\Gamma) \otimes_\Lambda M \cong \varprojlim M/I_nM,$$ is an isomorphism.
\end{prop}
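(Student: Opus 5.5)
By Lemma \ref{lem:smoothdistr}, $\mathcal{D}/\operatorname{Lie}(\Gamma) = \mathcal{D}_\infty \cong \varprojlim_n E[\Gamma/\Gamma_n]$. The plan is to reduce to finitely presented modules and compare two right exact functors on them. Since $\Lambda_E$ is noetherian, $M$ has a finite presentation
$$\Lambda_E^a \xrightarrow{A} \Lambda_E^b \to M \to 0.$$
The left-hand side $F(M) := \mathcal{D}_\infty \otimes_{\Lambda_E} M$ is right exact in $M$, so $F(M) = \operatorname{coker}(\mathcal{D}_\infty^a \xrightarrow{A} \mathcal{D}_\infty^b)$. For each $n$ the module $M/I_nM = E[\Gamma/\Gamma_n]\otimes_{\Lambda_E} M$ is likewise $\operatorname{coker}(E[\Gamma/\Gamma_n]^a \xrightarrow{A} E[\Gamma/\Gamma_n]^b)$. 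The natural map is the limit of the maps induced by $\mathcal{D}_\infty \to E[\Gamma/\Gamma_n]$, and for $M$ free of finite rank it is an isomorphism by Lemma \ref{lem:smoothdistr}. What remains is to show that $G(M) := \varprojlim_n M/I_nM$ takes the cokernel of $A$ to the cokernel of the limit map, i.e. that $\varprojlim$ is exact on the relevant systems.

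Write $N_n := \operatorname{im}(E[\Gamma/\Gamma_n]^a \to E[\Gamma/\Gamma_n]^b)$. We have exact sequences of projective systems $0 \to N_n \to E[\Gamma/\Gamma_n]^b \to M/I_nM \to 0$ and $E[\Gamma/\Gamma_n]^a \to N_n \to 0$. All terms are finite dimensional $E$-vector spaces, so the systems are Mittag-Leffler and $\varprojlim^1$ vanishes. Both limits are therefore exact here: $\varprojlim E[\Gamma/\Gamma_n]^a \to \varprojlim N_n$ is surjective, and
$$0 \to \varprojlim N_n \to \mathcal{D}_\infty^b \to \varprojlim M/I_nM \to 0$$
is exact. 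Combining these gives $G(M) = \operatorname{coker}(\mathcal{D}_\infty^a \xrightarrow{A} \mathcal{D}_\infty^b) = F(M)$, which is the claimed isomorphism. Alternatively, one can argue componentwise via Lemma \ref{lem:projectivelimitofgrouprings}: $\mathcal{D}_\infty = \prod_\rho E_\rho$, and both sides become $\prod_\rho E_\rho \otimes_{\Lambda_E} M$, since a finite presentation commutes with products.

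The main obstacle is that this argument is clean only if the natural map $\Lambda_E \to \mathcal{D}$ composed with $\mathcal{D}\to\mathcal{D}_\infty$ is compatible with the maps $\Lambda_E \to E[\Gamma/\Gamma_n]$. That compatibility is immediate because both are induced by $\Gamma \to \Gamma/\Gamma_n$. The other point needing care is the finiteness of the presentation, which uses that $\Lambda_E$ is noetherian; this holds because $\Gamma$ is a compact $p$-adic Lie group. No flatness of $\mathcal{D}$ over $\Lambda$ is needed, since only right exactness is used.
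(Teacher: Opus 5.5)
Your proposal is correct and follows the paper's proof: a finite presentation, Mittag--Leffler for the finite-dimensional systems (your $N_n$ is the paper's $X_n$, and the kernel of $E[\Gamma/\Gamma_n]^a\to N_n$, which you use implicitly, is its $Y_n$), and comparison of the limit sequence with $\mathcal{D}_\infty\otimes_{\Lambda_E}$ of the presentation. Your componentwise alternative via $\mathcal{D}_\infty\cong\prod_\rho E_\rho$ is also valid, given the paper's standing assumption that $\Gamma$ is abelian.
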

\begin{proof}
	When $M$ is finitely generated free the statement is clear by Lemma \ref{lem:smoothdistr}.
	We denote by $\mathcal{I}_n$ the kernel of the natural map $\mathcal{D} \to E[\Gamma/\Gamma_n].$ Because $\Lambda$ is noetherian, we can find a finite presentation of $M$ of the form 
	$$\Lambda_E^d \to \Lambda_E^{d'} \to M \to 0.$$
	Tensoring with $\mathcal{D}/\mathcal{I}_n (= E[\Gamma/\Gamma_n])$ we obtain an exact sequence 
	$$ (\mathcal{D}/\mathcal{I}_n)^d\to (\mathcal{D}/\mathcal{I}_n)^{d'} \to M/I_nM \to 0$$
	using 
	$$\mathcal{D}/\mathcal{I}_n \otimes_{\Lambda_E} M = E[\Gamma/\Gamma_n] \otimes_{\Lambda_E} M = \Lambda_E/I_n \otimes_{\Lambda_E}M = M/I_nM.$$
	Let $$X_n := \operatorname{ker}((\mathcal{D}/\mathcal{I}_n)^{d'}\to M/I_nM)$$ such that we have a short exact sequence
	\begin{equation} \label{eq:seqXM}0 \to X_n \to (\mathcal{D}/\mathcal{I}_n)^{d'} \to M/I_nM \to0 \end{equation}
	and let 
$$ Y_n = \operatorname{ker}((\mathcal{D}/\mathcal{I}_n)^d\to (\mathcal{D}/\mathcal{I}_n)^{d'})
$$
	such that we have a short exact sequence 
\begin{equation}\label{eq:seqYX}0 \to Y_n \to (\mathcal{D}/\mathcal{I}_n)^d \to X_n \to0.
\end{equation}
The $X_n$ and $Y_n$ are finite $E$-dimensional vector spaces and hence form Mittag-Leffler systems. 
Applying $\varprojlim_n$ to the sequence \eqref{eq:seqYX} shows
$$\varprojlim_n (D/\mathcal{I}_n)^d \to \varprojlim_n X_n \to 0$$ is exact and 
applying $\varprojlim_n$ to the sequence \eqref{eq:seqXM} now yields an exact sequence
$$\varprojlim_n (D/\mathcal{I}_n)^d \to \varprojlim_n(\mathcal{D}/\mathcal{I}_n)^{d'} \to \varprojlim_n M/I_nM \to0$$
Tensoring the presentation of $M$ with $\mathcal{D}_\infty$ and comparing with the above sequence yields a commutative diagram with exact rows
\[\begin{tikzcd}
	{\mathcal{D}_\infty^d} & {\mathcal{D}_\infty^{d'}} & {\mathcal{D}_\infty \otimes_{\Lambda_E}M} & 0 \\
	{\varprojlim_n(\mathcal{D}/\mathcal{I}_n)^d} & {\varprojlim_n(\mathcal{D}/\mathcal{I}_n)^{d'}} & {\varprojlim_n M/I_nM} & 0
	\arrow[from=1-1, to=1-2]
	\arrow["\cong", from=1-1, to=2-1]
	\arrow[from=1-2, to=1-3]
	\arrow["\cong", from=1-2, to=2-2]
	\arrow[from=1-3, to=1-4]
	\arrow[from=1-3, to=2-3]
	\arrow[from=2-1, to=2-2]
	\arrow[from=2-2, to=2-3]
	\arrow[from=2-3, to=2-4]
\end{tikzcd}\]
From the free case we see that the first and the second vertical maps are isomorphisms, the claim now follows from the Five Lemma.
\end{proof}

This result can be generalised to higher Tor groups in the case $\Gamma \cong \ZZ_p^d.$ 
\begin{lem} \label{lem:torRlim} Let $\Gamma \cong \ZZ_p^d.$
Let $M$ be a finitely generated $\Lambda[1/p]$-module, let $\mathcal{M}:= \mathcal{D} \otimes_\Lambda M$ and let $\mathcal{M}_\infty:= \mathcal{D}_\infty \otimes_{\Lambda} M$, then
$$\operatorname{Tor}_i^\mathcal{D}(\mathcal{D}_\infty,\mathcal{M}) \cong \varprojlim_n \operatorname{Tor}_i^{\Lambda[1/p]}(\Lambda[1/p]/I_n,M)$$ for every $i.$
\end{lem}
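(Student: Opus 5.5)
We will reduce the statement to the case $i=0$, which is Proposition \ref{prop:DmodNablaLimit}, by dimension shifting along a resolution of $M$ and using the Mittag-Leffler property of finite-dimensional systems. Since $\Gamma\cong\ZZ_p^d$, the ring $\Lambda[1/p]$ is noetherian of finite global dimension. So $M$ admits a finite resolution
$$0\to P_r\to\cdots\to P_0\to M\to 0$$
by finitely generated free $\Lambda[1/p]$-modules.

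Because $\mathcal{D}$ is flat (indeed faithfully flat) over $\Lambda[1/p]$, tensoring gives a free resolution $\mathcal{D}\otimes P_\bullet$ of $\mathcal{M}$. Hence
$$\operatorname{Tor}_i^{\mathcal{D}}(\mathcal{D}_\infty,\mathcal{M}) = H_i(\mathcal{D}_\infty\otimes_\Lambda P_\bullet).$$
By definition we also have
$$\operatorname{Tor}_i^{\Lambda[1/p]}(\Lambda[1/p]/I_n,M)=H_i(P_\bullet/I_nP_\bullet).$$
By Lemma \ref{lem:smoothdistr} and the free case of Proposition \ref{prop:DmodNablaLimit}, the complex $\mathcal{D}_\infty\otimes P_\bullet$ is isomorphic, termwise and compatibly with the differentials, to $\varprojlim_n P_\bullet/I_nP_\bullet$. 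The claim thus becomes the statement that $\varprojlim_n$ commutes with homology for the inverse system of complexes $C_n^\bullet:=P_\bullet/I_nP_\bullet$.

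For this we use that every term of $C_n^\bullet$ is a finite-dimensional $E$-vector space. Consequently the cycles $Z_i(C_n)$, the boundaries $B_i(C_n)$ and the homology groups are all finite-dimensional, so every inverse system that appears satisfies Mittag-Leffler and has vanishing $\varprojlim^1$. We apply $\varprojlim$ to the exact sequences $0\to Z_i\to C_i\to B_{i-1}\to0$ and $0\to B_i\to Z_i\to H_i\to 0$. This gives $\varprojlim Z_i = Z_i(\varprojlim C)$, since $\varprojlim$ is left exact. It gives $\varprojlim B_{i-1}=\operatorname{im}(\varprojlim C_i\to \varprojlim C_{i-1})$, using surjectivity together with $\varprojlim^1 Z_i=0$. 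Finally it gives $\varprojlim H_i = \varprojlim Z_i/\varprojlim B_i$, since $\varprojlim^1 B_i=0$. Together these yield $H_i(\varprojlim C_n)\cong\varprojlim H_i(C_n)$, which is the assertion. Alternatively, one could use the Milnor sequence for $\mathbf{R}\varprojlim$ with vanishing $\varprojlim^1$ terms.

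The main obstacle is justifying the finite free resolution and the flatness input. We need $\Lambda[1/p]\cong E\otimes_{o_E}o_E\llbracket T_1,\dots,T_d\rrbracket$ to be regular noetherian, which is standard, and we need flatness of $\mathcal{D}$ over $\Lambda[1/p]$, which is recalled from \cite{schneider2003algebras}. A second point to check is that the isomorphism $\mathcal{D}_\infty\otimes_\Lambda P\cong\varprojlim P/I_nP$ for free $P$ is natural with respect to the differentials, so that it is an isomorphism of complexes. This follows from the naturality already used in Proposition \ref{prop:DmodNablaLimit}. Finally, note that the left side is $\operatorname{Tor}^{\mathcal{D}}$ with $\mathcal{D}_\infty=\mathcal{D}/J$ a $\mathcal{D}$-module, and that $\mathcal{D}_\infty\otimes_{\mathcal{D}}(\mathcal{D}\otimes_\Lambda P)=\mathcal{D}_\infty\otimes_\Lambda P$. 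This is what identifies the two complexes.
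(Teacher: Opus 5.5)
Your proof is correct, but it follows a different route from the paper's.

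The paper argues on the $\mathcal{D}$-side. It identifies $\operatorname{Tor}_i^{\Lambda[1/p]}(\Lambda[1/p]/I_n,M)$ with $\operatorname{Tor}_i^{\mathcal{D}}(\mathcal{D}/\mathcal{I}_n,\mathcal{M})$. It then uses Koszul complexes (this is where $\Gamma\cong\ZZ_p^d$ enters) to see two things: these groups are finite-dimensional, and $\mathcal{D}_\infty\otimes^{\mathbb{L}}_{\mathcal{D}}\mathcal{M}$ has coadmissible cohomology. Finally it invokes the consistent-complex equivalence of Lemma \ref{lem:consistent}, together with flatness of $\mathcal{D}/\mathcal{I}_n$ over $\mathcal{D}_\infty$. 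This gives $\mathcal{D}_\infty\otimes^{\mathbb{L}}_{\mathcal{D}}\mathcal{M}\cong\mathbf{R}\lim_n(\mathcal{D}/\mathcal{I}_n\otimes^{\mathbb{L}}_{\mathcal{D}}\mathcal{M})$, and Mittag-Leffler finishes the proof.

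You instead resolve $M$ by finitely generated free modules. Lemma \ref{lem:smoothdistr} then applies termwise and gives an isomorphism of complexes $\mathcal{D}_\infty\otimes_\Lambda P_\bullet\cong\varprojlim_n P_\bullet/I_nP_\bullet$. You commute $\varprojlim$ with homology by hand, which is the mechanism of the paper's proof of Proposition \ref{prop:DmodNablaLimit} extended to all degrees.

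\textbf{What your route buys.} It is more elementary: it needs neither the coadmissibility of the Tor groups nor the equivalence from \cite{steingart2024iwasawa}. Finite-dimensionality comes for free from $P_j/I_nP_j\cong E[\Gamma/\Gamma_n]^{r_j}$, so the Koszul complex is never used. Beyond the existence of the free resolution, you only use flatness of $\mathcal{D}$ and Lemma \ref{lem:smoothdistr}. Since your terms are Mittag-Leffler, $\varprojlim_n P_\bullet/I_nP_\bullet$ already computes $\mathbf{R}\lim_n$, so you also recover the paper's derived-level identification.

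\textbf{What the paper's route buys.} It places the statement inside the consistent-complex framework that the paper relies on later.

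\textbf{One correction.} Finite global dimension of $\Lambda[1/p]$ by itself only gives a finite \emph{projective} resolution. To get a finite free one, pass to a finitely generated $\Lambda$-lattice in $M$ and use that $\Lambda$ is regular local. Alternatively, drop finiteness: noetherianity of $\Lambda[1/p]$ gives a (possibly infinite) resolution by finitely generated free modules, and this is all your argument needs.
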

\begin{proof}
Since $\mathcal{D}$ is flat over $\Lambda$ we have 
$$\operatorname{Tor}_i^{\Lambda[1/p]}(\Lambda[1/p]/I_n,M) \cong \operatorname{Tor}_i^\mathcal{D}(\mathcal{D}/\mathcal{I}_n,\mathcal{M}).$$
By resolving $\mathcal{D}/\mathcal{I}_n$ using the Koszul complex we can see that these Tor groups are finitely generated $\mathcal{D}/I_n$-modules. In particular, they are finite dimensional as $E$-vector spaces. By Mittag--Leffler $\varprojlim_n^1\operatorname{Tor}_{i+1}^\mathcal{D}(\mathcal{D}/\mathcal{I}_n,\mathcal{M})=0,$ which implies that 
$$ H^{-i}(\mathbf{R}\lim (\mathcal{D}/\mathcal{I}_n \otimes_{\mathcal{D}}^{\mathbb{L}}\mathcal{M}) \cong \varprojlim_n\operatorname{Tor}_i^{\Lambda[1/p]}(\Lambda[1/p]/I_n,M)$$
By similar Koszul complex arguments the complex $\mathcal{D}_\infty \otimes_{\mathcal{D}}^{\mathbb{L}}\mathcal{M}$ has coadmissible cohomology groups. Furthermore, by representing the complex using a flat resolution of $M$ instead of a resolution of $\mathcal{D}_\infty$ we get a complex of $\mathcal{D}_\infty$-modules with coadmissible cohomology. 
By \cite[Proposition  3.15 ]{steingart2024iwasawa}  (see also Lemma \ref{lem:consistent})
we have 

$$\mathcal{D}_\infty \otimes_{\mathcal{D}}^{\mathbb{L}}\mathcal{M} \cong \mathbf{R}\lim_n[\mathcal{D}/\mathcal{I}_n \otimes_{\mathcal{D_\infty}}^{\mathbb{L}} \mathcal{D}_\infty \otimes_{\mathcal{D}}^{\mathbb{L}}\mathcal{M} ].$$ This implies the claim since $\mathcal{D}/\mathcal{I}_n$ is flat over $\mathcal{D}_\infty$ and hence $$\mathcal{D}/\mathcal{I}_n \otimes_{\mathcal{D_\infty}}^{\mathbb{L}} \mathcal{D}_\infty \cong \mathcal{D}/\mathcal{I}_n.$$

\end{proof}

\section{Universal norms in Partial de Rham Selmer groups of $B$-pairs}
\subsection{Preliminaries on $B$-pairs}

We recall from \cite{nakamura2009classification} the notion of an $E$-$B$-pair. An $E$-$B$-pair is a pair $W = (W_e,W_{\mathrm{dR}}^+)$ where $W_e$ is a finite free $\fB_e \otimes_{\QQ_p} E$-module and $W_{\mathrm{dR}}^+$ is a finite free $\fB_{\mathrm{dR}}^+ \otimes_{\QQ_p} E$-module each equipped with a continuous action of $G_K,$ trivial on $E$ and semi-linear with respect to the natural action on $\fB_e$ and $\fB_{\mathrm{dR}}^+$ respectively, together with a $G_K$-equivariant isomorphism
$$\fB_{\mathrm{dR}} \otimes_{\Bdr^+} W_{\mathrm{dR}}^+ \cong \fB_{\mathrm{dR}} \otimes_{\fB_e}W_e.$$ The rank of $W$ is defined to be the rank of $W_e$ as an $\fB_e \otimes_{\QQ_p} E$-module.
We denote by $H^i(G_K,W)$ the cohomology of the total complex of continuous cochains $C^\bullet_{cts}(G_K,[W_e \oplus W_{\mathrm{dR}}^+ \to W_{\mathrm{dR}}])$ (shifted appropriately such that $H^0(G_K,W) = H^0(G_K,W_e)\cap H^0(G_K,W_{\mathrm{dR}}^+)$). The category of $E$-$B$-pairs is equivalent to the category of $(\varphi,\Gamma)$-modules over the ring $E \otimes_{\QQ_p}\fB^{\dagger}_{\text{rig},K}$ (cf. \cite[Theorem 1.36]{nakamura2009classification}), whose definition we do not recall, and contains the category of $E$-linear representations of $G_K$ as a full subcategory, where the $E$-$B$-pair attached to a representation $V$ is given by $(\fB_e \otimes_{\QQ_p} V, \fB_{\mathrm{dR}}^+ \otimes_{\QQ_p} V).$ Another equivalent way of thinking of $E$-$B$-pairs is as equivariant vector bundles on the Fargues--Fontaine curve (using the notation of \cite{farguesfontainecourbe}) $X_{\CC_p^\flat,E} \cong E \times_{\QQ_p}X_{\CC_p^\flat,\QQ_p}$ (with $G_K$ acting trivially on $E$). Since $E$ contains a normal closure of $K$ and $\Bdr^+$ contains an algebraic closure of $K,$ we have a decomposition 
$$W_{\dR}^+ = \prod_{\sigma \in \Hom(K,E)} W_{\mathrm{dR},\sigma}^+$$ and similarly for $W_{\dR} = \Bdr \otimes_{\Bdr^+} W_{\dR}^+.$
We say that $W$ is de Rham at $\sigma$ if $H^0(G_K,W_{dR,\sigma})$ has $E$-dimension equal to the rank of $W.$ As usual, we say that $j$ is a Hodge--Tate weight of $W$ at $\sigma$ if $H^0(G_K,t^{-j}W_{\mathrm{dR},\sigma}^+/t^{-j+1}W_{\mathrm{dR},\sigma}^+) \neq 0.$ With this convention the cyclotomic character has Hodge--Tate weight $1.$
\subsection{Partial Selmer groups}
Let $W = (W_e,W_{\mathrm{dR}}^+)$ be an $E$-$B$-pair. Assume that $W$ is de Rham at a set $J \subset \Hom(K,E).$
Let $\delta = \prod_{\sigma \in J} \sigma^{\kappa_{\sigma}}$ where $\kappa_{\sigma}$ is is chosen large enough to ensure $H^0(G_K,t^{\kappa_{\sigma}}W_{\mathrm{dR},\sigma}^+)=0.$
\begin{lem} \label{lem:basicsequence} Suppose $J \neq \emptyset.$
	There is a long exact sequence 
	\begin{align*}
	0 &\to H^0(G_K,W) \to \bigoplus_{\sigma\in J} H^0(G_K,W_{\mathrm{dR},\sigma}^+) \to H^1(G_K,W(\delta)) \\
	 &\to H^1(G_K,W) \to \bigoplus H^1(G_K,W_{\mathrm{dR},\sigma}^+) \to \dots.
	\end{align*}
and the image of the natural map 
	$$H^1(G_K,W(\delta)) \xrightarrow{\operatorname{c_\delta}} H^1(G_K,W)$$
	is 
	$$H^1_{g,J}(W) = \operatorname{ker}[H^1(G_K,W) \to \prod_{\sigma \in J} H^1(G_K,W_{\mathrm{dR},\sigma})].$$
	In particular, we have an exact sequence
	$$0 \to H^0(G_K,W) \to \bigoplus_{\sigma\in J} H^0(G_K,W_{\mathrm{dR},\sigma}^+) \to H^1(G_K,W(\delta)) \to H^1_{g,J}(G_K,W) \to 0.$$
\end{lem}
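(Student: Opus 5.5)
The plan is to realise $W(\delta)$ as a sub-$B$-pair of $W$ that differs from $W$ only in the $\Bdr^+$-lattice at the embeddings in $J$, and to read off the long exact sequence from the resulting short exact sequence of complexes. Concretely, $W(\delta)$ has the same $W_e$ as $W$, while its $\Bdr^+$-component is $\prod_{\sigma\in J} t^{\kappa_\sigma}W^+_{\mathrm{dR},\sigma}\times\prod_{\sigma\notin J}W^+_{\mathrm{dR},\sigma}$. The inclusion $W(\delta)\to W$ is the identity on $W_e$ and on $W_{\mathrm{dR}}$. I would first check that the mapping-cone complexes $[W_e\oplus W_{\mathrm{dR}}^+\to W_{\mathrm{dR}}]$ of $W(\delta)$ and of $W$ fit into a short exact sequence of $G_K$-modules (of complexes). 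Its cokernel is concentrated in the $\Bdr^+$-slot and equals $\bigoplus_{\sigma\in J}W^+_{\mathrm{dR},\sigma}/t^{\kappa_\sigma}W^+_{\mathrm{dR},\sigma}$. Applying continuous cochains gives a long exact sequence
\[\dots\to H^i(G_K,W(\delta))\to H^i(G_K,W)\to\bigoplus_{\sigma\in J}H^i(G_K,W^+_{\mathrm{dR},\sigma}/t^{\kappa_\sigma})\to H^{i+1}(G_K,W(\delta))\to\dots\]
One should check that continuous cochains remain exact here. The maps are topologically split surjections of Fréchet/LF spaces, since $t^{\kappa}W^+_{\mathrm{dR}}$ is a direct summand topologically, so this is routine.

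Next, I would identify $H^i(G_K,W^+_{\mathrm{dR},\sigma}/t^{\kappa_\sigma})$ with $H^i(G_K,W^+_{\mathrm{dR},\sigma})$ for $i=0,1$. This is where the choice of $\kappa_\sigma$ enters, and I expect it to be the main point. From $0\to t^{\kappa}W^+_{\mathrm{dR},\sigma}\to W^+_{\mathrm{dR},\sigma}\to W^+_{\mathrm{dR},\sigma}/t^\kappa\to0$ it suffices to show $H^i(G_K,t^\kappa W^+_{\mathrm{dR},\sigma})=0$ for $i=0,1$ (and ideally $i=2$). Since $W$ is de Rham at $\sigma$, we can write $W_{\mathrm{dR},\sigma}\cong \Bdr\otimes D_\sigma$ with $D_\sigma=H^0(G_K,W_{\mathrm{dR},\sigma})$, and the lattice $W^+_{\mathrm{dR},\sigma}$ is then determined by a filtration on $D_\sigma$. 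For $\kappa$ large, $t^\kappa W^+_{\mathrm{dR},\sigma}$ has all graded pieces $\CC_p(j)^{\oplus}$ with $j\ge1$. By Tate's computation all cohomology of $\CC_p(j)$ vanishes for $j\neq0$, and a dévissage plus completeness ($\varprojlim$ with Mittag-Leffler) gives vanishing. The hypothesis as stated, $H^0(G_K,t^{\kappa}W^+_{\mathrm{dR},\sigma})=0$, together with de Rhamness should force the weights of $t^{\kappa}W^+$ to be $\ge1$, and I would argue exactly this. Should a $\mathrm{Gr}^0$ piece remain, I would enlarge $\kappa$, which does not change the statement. With this identification the first part of the long exact sequence is the displayed one. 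The first map $H^0(G_K,W)\to\bigoplus H^0(W^+_{\mathrm{dR},\sigma})$ is injective: $H^0(G_K,W(\delta))$ injects into $H^0(G_K,W)$, and the latter injects into $\bigoplus_\sigma H^0(G_K,W^+_{\mathrm{dR},\sigma})$. The exact sequence shows $H^0(W(\delta))$ vanishes under our choice, since the cohomology of $t^\kappa W^+$ vanishes.

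Finally, the image of $c_\delta$ is the kernel of $H^1(G_K,W)\to\bigoplus_{\sigma\in J}H^1(G_K,W^+_{\mathrm{dR},\sigma})$. It remains to compare this with $H^1_{g,J}$, defined using $W_{\mathrm{dR},\sigma}$ instead of $W^+_{\mathrm{dR},\sigma}$, so I would show that $H^1(G_K,W^+_{\mathrm{dR},\sigma})\to H^1(G_K,W_{\mathrm{dR},\sigma})$ is injective. Writing $W_{\mathrm{dR},\sigma}=\varinjlim t^{-m}W^+_{\mathrm{dR},\sigma}$, the cokernel $t^{-m}W^+/W^+$ has graded pieces $\CC_p(j)$ with $j$ less than the weights of $W^+$. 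The $H^0$ of the quotient surjects onto the connecting map's source whenever $H^0(W_{\mathrm{dR},\sigma})\to H^0(W_{\mathrm{dR},\sigma}/W^+)$ is onto. This holds by de Rhamness: $D_\sigma$ surjects onto $(\Bdr/\Bdr^+\otimes D_\sigma)$-invariants modulo the filtration, using $H^0(G_K,\Bdr/t^k\Bdr^+)$ computations. Hence the connecting map $H^0(W_{\mathrm{dR},\sigma}/W^+)\to H^1(W^+)$ is zero and injectivity follows. Combining, the image of $c_\delta$ is $H^1_{g,J}(W)$, and truncating the long exact sequence yields the final four-term exact sequence.
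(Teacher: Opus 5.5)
Your overall route (the short exact sequence $0\to W(\delta)\to W\to\bigoplus_{\sigma\in J}W^+_{\mathrm{dR},\sigma}/t^{\kappa_\sigma}W^+_{\mathrm{dR},\sigma}\to 0$, vanishing of all $H^i(G_K,t^{\kappa_\sigma}W^+_{\mathrm{dR},\sigma})$, and injectivity of $H^1(G_K,W^+_{\mathrm{dR},\sigma})\to H^1(G_K,W_{\mathrm{dR},\sigma})$ for $\sigma\in J$) is the standard argument behind the reference the paper cites; the paper itself gives no proof beyond that citation. You are right that the condition on $\kappa_\sigma$ together with de Rhamness at $\sigma$ forces the vanishing. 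These steps are fine.

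There is, however, a genuine gap at the first map, i.e.\ in showing $H^0(G_K,W(\delta))=0$. Your reason, that the cohomology of $t^{\kappa}W^+$ vanishes, only controls the components at $\sigma\in J$. At $\sigma\notin J$ the lattice of $W(\delta)$ is the unchanged $W^+_{\mathrm{dR},\sigma}$. So all you get is $H^0(G_K,W(\delta))\hookrightarrow\bigoplus_{\sigma\notin J}H^0(G_K,W^+_{\mathrm{dR},\sigma})$, whose target is nonzero in general. Likewise, injectivity of $H^0(G_K,W)$ into $\bigoplus_{\sigma}H^0(G_K,W^+_{\mathrm{dR},\sigma})$ over \emph{all} embeddings is not what is needed. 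A telltale sign is that you never use the hypothesis $J\neq\emptyset$. For $J=\emptyset$ one has $W(\delta)=W$, and exactness at $H^0(G_K,W)$ of the asserted sequence fails, since $H^0(G_K,W)$ would then map injectively into the zero module.

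The missing ingredient is that an element of $W_e$ is determined by its image in a \emph{single} component $W_{\mathrm{dR},\sigma}$. Since $W_e$ is free over $\fB_e\otimes_{\QQ_p}E$, this reduces to injectivity of $\fB_e\otimes_{\QQ_p}E\to\Bdr\otimes_{K,\sigma}E$. You can check this after composing with a projection $\Bdr\otimes_{K,\sigma}E\to\Bdr$, $b\otimes e\mapsto b\,\tau(e)$. Injectivity then amounts to linear disjointness of $\operatorname{Frac}(\fB_e)$ and $\tau(E)$ over $\QQ_p$ inside the field $\Bdr$. This follows from $\operatorname{Frac}(\fB_e)\cap\overline{\QQ}_p=\QQ_p$ by passing to the Galois closure of $\tau(E)$.

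Granting this, take $x\in H^0(G_K,W(\delta))$, viewed in $W_e^{G_K}$, and pick $\sigma\in J$; this is exactly where $J\neq\emptyset$ enters. The $\sigma$-component of $x$ lies in $H^0(G_K,t^{\kappa_\sigma}W^+_{\mathrm{dR},\sigma})=0$, so $x=0$. The rest of your argument then yields both the long exact sequence and the four-term sequence.
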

\begin{proof}
See \cite[Equation (7) and the preceding discussions]{ding2017protect}.
\end{proof}
\begin{lem} \label{lem:dimensionformula} Let $N:= \sum_{\sigma \in J} \dim H^0(G_K,W_{\mathrm{dR},\sigma}^+).$
We have $$\dim H^1_{g,J}(G_K,W) = [K:\QQ_p] \operatorname{rank}(W) - N + \dim H^2(G_K,W(\delta)) + \dim H^0(G_K,W). $$
If the weights of $W$ at $\sigma$ are non-positive, then $N = \operatorname{rank}(W) \cdot |J|$ and the above formula simplifies to
$$\dim H^1_{g,J}(G_K,W) = ([K:\QQ_p]-|J|) \operatorname{rank}(W) + \dim H^2(G_K,W(\delta)) + \dim H^0(G_K,W).$$
\end{lem}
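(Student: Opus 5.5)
The plan is to read off $\dim H^1_{g,J}(G_K,W)$ from the four-term exact sequence of Lemma \ref{lem:basicsequence},
$$0 \to H^0(G_K,W) \to \bigoplus_{\sigma\in J} H^0(G_K,W_{\mathrm{dR},\sigma}^+) \to H^1(G_K,W(\delta)) \to H^1_{g,J}(G_K,W) \to 0,$$
and then to compute $\dim H^1(G_K,W(\delta))$ with the Euler--Poincaré formula for $B$-pairs. All terms are finite-dimensional over $E$, so the alternating sum of dimensions vanishes. This gives
$$\dim H^1_{g,J}(G_K,W) = \dim H^1(G_K,W(\delta)) - N + \dim H^0(G_K,W).$$

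Next I would invoke Nakamura's Euler--Poincaré characteristic formula for $E$-$B$-pairs (\cite{nakamura2009classification}):
$$\sum_i (-1)^i \dim_E H^i(G_K,W') = -[K:\QQ_p]\operatorname{rank}(W'),$$
applied to $W' = W(\delta)$, which has the same rank as $W$. This yields
$$\dim H^1(G_K,W(\delta)) = [K:\QQ_p]\operatorname{rank}(W) + \dim H^0(G_K,W(\delta)) + \dim H^2(G_K,W(\delta)).$$

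The one point needing care is that $H^0(G_K,W(\delta))=0$, and this is where the choice of the $\kappa_\sigma$ enters. Any $G_K$-invariant of $W(\delta)$ lies in $H^0(G_K,W(\delta)_{\mathrm{dR}}^+)$. Twisting by $\delta$ multiplies the $\sigma$-component of $W_{\mathrm{dR}}^+$ by $t^{\kappa_\sigma}$ for $\sigma\in J$ (this is the description underlying Lemma \ref{lem:basicsequence}). So the $\sigma$-components of such an invariant lie in $H^0(G_K,t^{\kappa_\sigma}W_{\mathrm{dR},\sigma}^+)=0$ for $\sigma\in J$.

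Since $J\neq\emptyset$, an invariant is then an element of $W(\delta)_e^{G_K}\cap W(\delta)_{\mathrm{dR}}^{+,G_K}$ with some vanishing $\sigma$-component. It is therefore killed by a nonzero idempotent of $E\otimes_{\QQ_p}K$. An $H^0$ of a $B$-pair is a module over $(E\otimes\fB_e)^{G_K}=E$, and $E$ does not see that idempotent separately. Hence the invariant is zero. I expect this to be the main obstacle. If the argument above is too slick, I would instead use the exact sequence of Lemma \ref{lem:basicsequence} itself, where $H^0(G_K,W(\delta))$ appears as the kernel of $H^0(G_K,W)\to\oplus_{\sigma\in J}H^0(G_K,W_{\mathrm{dR},\sigma}^+)$, shifted appropriately.

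Substituting gives the first formula. For the second, if all weights at $\sigma\in J$ are non-positive and $W$ is de Rham at $\sigma$, then $W_{\mathrm{dR},\sigma}^+\supseteq$ the lattice generated by the de Rham periods. Thus $H^0(G_K,W_{\mathrm{dR},\sigma}^+)=H^0(G_K,W_{\mathrm{dR},\sigma})$ has dimension $\operatorname{rank}(W)$. Summing over $J$ gives $N=|J|\operatorname{rank}(W)$, and the simplified formula follows.
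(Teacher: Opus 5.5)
Your proof is correct and is exactly the argument the paper has in mind: the paper states this lemma without proof, as an immediate consequence of Lemma \ref{lem:basicsequence} together with Nakamura's Euler--Poincar\'e formula applied to $W(\delta)$, whose rank equals that of $W$. The only soft spot is the idempotent heuristic for $H^0(G_K,W(\delta))=0$, since being an $E$-module does not by itself stop an element from being killed by some $e_\sigma\in E\otimes_{\QQ_p}K$ inside $W_{\mathrm{dR}}$ (that would need injectivity of $(E\otimes_{\QQ_p}\Bdr)\otimes_E H^0(G_K,W_e)\to W_{\mathrm{dR}}$); your fallback is rigorous, however, because $H^0(G_K,W(\delta))\subseteq H^0(G_K,W)$ maps to zero in $\bigoplus_{\sigma\in J}H^0(G_K,W_{\mathrm{dR},\sigma}^+)$, and that map is injective on $H^0(G_K,W)$ by Lemma \ref{lem:basicsequence}.
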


\subsection{Asymptotic behaviour of $H^1_{g,J}.$}
We now turn to studying the behaviour of $H^1_{g,J}(G_{K_n},W)$ for $n \in \NN.$ Let $Q_n:= G_K/G_{K_n}\cong \Gamma/\Gamma_n.$ The following remark is a direct consequence of Shapiro's Lemma 
\begin{rem} 
We have $H^i(K,E[Q_n] \otimes_E W) \cong H^i(K_n,W).$
\end{rem}
We can naturally think of $E [Q_n] \otimes W$ as an $E$-$B$-pair. The dimension formula of Lemma \ref{lem:dimensionformula} then implies that
\begin{align*}\dim H^1_{g,J}(K_n,W) &= ([K:\QQ_p]) \operatorname{rank}(W) \cdot |Q_n| - N^{(n)}\\
	&+ \dim H^2(G_K,E[Q_n] \otimes W(\delta)) + \dim H^0(G_K,E[Q_n] \otimes W).\end{align*}
where $N^{(n)} = \sum_{\sigma \in J} \dim H^0(G_K,E[Q_n] \otimes W_{\mathrm{dR},\sigma}^+).$
We will show that the dimensions of the $H^0$ and $H^2$ terms are bounded independently of $n$ while the dimensions of the first two summands grow by a factor of $|Q_n| = [K_n:K].$
This will allow us to determine the rank of 
$$\varprojlim_n H^1_{g,J}(K_n,W)$$ as a module over the smooth distribution algebra. 
\begin{lem} We have 
\begin{enumerate}[(i)]
	\item $$\dim_E H^0(G_K,E[Q_n] \otimes_E W_{\mathrm{dR},\sigma}^+) = [K_n:K] \dim_E H^0(G_{K}, W_{\mathrm{dR},\sigma}^+)$$
	\item We have $N^{(n)} = [K_n:K] \cdot N.$
\end{enumerate}
\end{lem}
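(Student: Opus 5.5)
The plan is to exploit the fact that $Q_n$ acts on $E[Q_n]$ through a quotient of $G_K$ that is \emph{finite}, so $E[Q_n]\otimes_E W_{\mathrm{dR},\sigma}^+$ is, up to Shapiro's Lemma, just the induction of $W_{\mathrm{dR},\sigma}^+$ from $G_{K_n}$ to $G_K$. By the Remark above, applied to the $\Bdr^+$-part,
$$H^0(G_K,E[Q_n]\otimes_E W_{\mathrm{dR},\sigma}^+)\cong H^0(G_{K_n},W_{\mathrm{dR},\sigma}^+).$$
So (i) reduces to showing $\dim_E H^0(G_{K_n},W_{\mathrm{dR},\sigma}^+)=[K_n:K]\dim_E H^0(G_K,W_{\mathrm{dR},\sigma}^+)$. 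Once (i) is established, (ii) follows at once by summing over $\sigma\in J$.

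For the reduced statement I would use Galois descent along the finite Galois extension $K_n/K$. Put $D_n:=H^0(G_{K_n},W_{\mathrm{dR},\sigma}^+)$. This is a module over $H^0(G_{K_n},E\otimes_{\sigma,K}\Bdr^+)=E\otimes_{\sigma,K}K_n$, and it carries a semilinear action of $Q_n$. Since $E$ contains a normal closure of $K$, we have $E\otimes_{\sigma,K}K_n\cong\prod_{\tau} E$, a product over the $[K_n:K]$ embeddings $\tau$ of $K_n$ extending $\sigma$. The action of $Q_n$ permutes these factors simply transitively. By Hilbert 90 for the Galois extension $E\otimes_{\sigma,K}K_n$ over $E$ with group $Q_n$ (equivalently, Galois descent for a finite étale Galois algebra), the natural map
$$(E\otimes_{\sigma,K}K_n)\otimes_E D_n^{Q_n}\to D_n$$
is an isomorphism, and $D_n^{Q_n}=H^0(G_K,W_{\mathrm{dR},\sigma}^+)$. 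Comparing $E$-dimensions gives $\dim_E D_n=[K_n:K]\dim_E D_n^{Q_n}$, which is exactly (i).

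The main point to check carefully, though it is not deep, is that $D_n$ really is a semilinear module over $E\otimes_{\sigma,K}K_n$ to which descent applies. First, $K_n\subset (\Bdr^+)^{G_{K_n}}$ acts on $W_{\mathrm{dR},\sigma}^+$ through $\Bdr^+$, compatibly with the $\sigma$-twisted $E$-structure. Second, the decomposition $W_{\mathrm{dR}}^+=\prod_\sigma W_{\mathrm{dR},\sigma}^+$ is $G_K$-stable, because $G_K$ fixes $K$. An alternative route, if one prefers to avoid the ring structure, is to note that $E[Q_n]$ becomes a free $E\otimes_{\sigma,K}K_n$-module of rank one after the $\sigma$-twist (normal basis theorem). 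In that case $E[Q_n]\otimes W_{\mathrm{dR},\sigma}^+\cong (K_n\otimes_K E)\otimes W_{\mathrm{dR},\sigma}^+$, and the factor $K_n\otimes_K\Bdr^+$ splits as $\prod_\tau\Bdr^+$ with $G_K$ permuting the factors trivially on $\Bdr^+$. Either way the count comes out to $[K_n:K]$ copies of $H^0(G_K,W_{\mathrm{dR},\sigma}^+)$.
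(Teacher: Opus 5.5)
Your argument is correct, but it takes a different route from the paper's. The paper never passes to $G_{K_n}$. It writes $E[Q_n]\otimes_E W^+_{\mathrm{dR},\sigma}=\QQ_p[Q_n]\otimes_{\QQ_p}\Bdr^+\otimes_{\Bdr^+}W^+_{\mathrm{dR},\sigma}$ and uses that the smooth representation $\QQ_p[Q_n]$ is $\overline{K}$-admissible. This gives $\QQ_p[Q_n]\otimes_{\QQ_p}\Bdr^+\cong(\Bdr^+)^{[K_n:K]}$, hence $E[Q_n]\otimes_E W^+_{\mathrm{dR},\sigma}\cong (W^+_{\mathrm{dR},\sigma})^{[K_n:K]}$ as semilinear $G_K$-representations, and taking invariants yields (i). You instead apply Shapiro's lemma in degree $0$ (a direct computation) to reach $H^0(G_{K_n},W^+_{\mathrm{dR},\sigma})$, and then run Galois descent along $K_n/K$ on that invariant space.

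Both proofs rest on Hilbert 90 for $K_n/K$. The paper applies it to the coefficient module, you apply it to the space of $G_{K_n}$-invariants. The paper's version gives more: an isomorphism of representations, hence $H^i(G_{K_n},X)\cong H^i(G_K,X)^{[K_n:K]}$ for every $i$. Yours only treats $H^0$, but it uses nothing beyond classical descent for vector spaces. Your closing ``alternative route'' (normal basis theorem, then splitting $K_n\otimes_K\Bdr^+$ into factors that $G_K$ does not permute) is essentially the paper's argument. The non-permutation uses that $Q_n$ is abelian; in general $G_K$ permutes the factors by conjugation.

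One correction. The claim $E\otimes_{\sigma,K}K_n\cong\prod_\tau E$ is false in general. $E$ is only assumed to contain a normal closure of $K$, not of $K_n$, and since $E$ is fixed while $[K_n:K]\to\infty$, the splitting fails for $n\gg0$. It is also unnecessary, for either of two reasons:
\begin{enumerate}
\item[(a)] $E\otimes_{\sigma,K}K_n$ is a Galois $Q_n$-algebra over $E$ of rank $[K_n:K]$ whether or not it splits, so descent still applies.
\item[(b)] More simply, $D_n$ is a $K_n$-vector space via $K_n\subset\Bdr^+$, with semilinear $Q_n$-action. Speiser's lemma gives $K_n\otimes_K D_n^{Q_n}\cong D_n$, this map is $E$-linear, and $\dim_E D_n=[K_n:K]\dim_E D_n^{Q_n}$ follows.
\end{enumerate}
Relatedly, the normal basis theorem says $E\otimes_{\sigma,K}K_n$ is free of rank one over $E[Q_n]$, not the other way around. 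The consequence you draw, $E[Q_n]\cong E\otimes_{\sigma,K}K_n$ as $E[Q_n]$-modules, is still correct.
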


\begin{proof}
	Let $X:= W_{\mathrm{dR},\sigma}^+$
	We rewrite 
	$E[Q_n] = E \otimes_{\QQ_p}\QQ_p[Q_n]$ and since
	$$E[Q_n] \otimes_{E}X = E \otimes_{\QQ_p} \QQ_p[Q_n] \otimes_{E} X = 
	\QQ_p[Q_n] \otimes_{\QQ_p} E \otimes_E X = \QQ_p[Q_n] \otimes_{\QQ_p}X$$ it suffices to show 
	 $$\dim_E H^0(G_K,\QQ_p[Q_n] \otimes_{\QQ_p} X) = [K_n:K] \dim_E H^0(G_{K}, X).$$
To this end we can rewrite 
$$ \QQ_p[Q_n] \otimes_{\QQ_p} X = \QQ_p[Q_n] \otimes_{\QQ_p} \Bdr^+ \otimes_{\Bdr^+}X.$$
Now observe, that $\QQ_p[Q_n]$ is a smooth representation, in particular it is $\overline{K}$-admissible. Since $\Bdr^+$ contains an algebraic closure of $K,$ it follows that 
$\QQ_p[Q_n] \otimes_{\QQ_p} \Bdr^+ \cong (\Bdr^+)^{[K_n:K]}$ as representations. 
Hence $$ \QQ_p[Q_n] \otimes_{\QQ_p} X \cong X^{[K_n:K]}$$ as representations which implies the claim. 
\end{proof}
\begin{rem} \label{rem:HTweightsless1}
If $V$ is a de Rham $E$ representation and $$W_{\mathrm{dR}}^+:= \Bdr^+ \otimes_{\QQ_p}V$$, then $$N = \sum_{\sigma \in J} \dim_E(H^0(G_K,W_{\mathrm{dR},\sigma}^+)) $$ is the number of non-positive HT-weights of $V$ at the embeddings in $J$ (counted with multiplicities).
\end{rem}
Let
$$c_{\delta}^{(n)} \colon H^1(G_{K_n},W(\delta)) \xrightarrow{\operatorname{c_\delta}} H^1(G_{K_n},W).$$

\begin{lem} \label{lem:trivialH0}
	Let $W = (W_e,W_{\mathrm{dR}}^+)$ be an $E$-$B$-pair. Then for all but finitely many isomorphism classes $\rho$ of finite order characters $\rho \colon G_K \to \Gamma_K \to \overline{\QQ_p}^\times,$ the group $H^0(G_K,W\otimes_E E_\rho(\rho))$ is trivial.
\end{lem}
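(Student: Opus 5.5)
Since $H^0(G_K,W\otimes_E E_\rho(\rho)) = H^0(G_K,W_e\otimes E_\rho(\rho))\cap H^0(G_K,W^+_{\mathrm{dR}}\otimes E_\rho(\rho))$, it suffices to bound only one of the two pieces. I will use the de Rham part $W^+_{\mathrm{dR}}$, or, cleaner, its quotient by $t$. The plan is to show that a nonzero invariant vector of $W\otimes E_\rho(\rho)$ produces a nonzero $G_K$-equivariant map from $E_\rho(\rho^{-1})$ into a finite-dimensional object attached to $W$. Then only finitely many characters $\rho$ can occur, for dimension reasons.

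The first step is to pass to the Hodge--Tate graded pieces. Since $W$ is a $B$-pair, $W^+_{\mathrm{dR}}$ is a $\Bdr^+$-representation and $\Bdr^+$ is $t$-adically separated. An invariant element of $W^+_{\mathrm{dR}}\otimes E_\rho(\rho)$ lies in $t^kW^+_{\mathrm{dR}}\otimes E_\rho(\rho)$ but not in $t^{k+1}(\cdot)$ for some $k$. Hence its image in $t^kW^+_{\mathrm{dR}}/t^{k+1}W^+_{\mathrm{dR}}\otimes E_\rho(\rho)$ is a nonzero invariant. This gives an injection
$$H^0(G_K,W\otimes E_\rho(\rho))\hookrightarrow \bigoplus_{k\ge0} H^0\bigl(G_K,\,\CC_p(k)\otimes_{\CC_p}(W^+_{\mathrm{dR}}/t)\otimes E_\rho(\rho)\bigr).$$
This is not quite an injection into the direct sum, but a nonzero element gives a nonzero element in some summand, which is all we need. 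By Sen theory, $\CC_p(k)\otimes(\cdot)$ has no invariants unless $-k$ is a Sen weight, because $\rho$ has finite order and so has Sen weight $0$. So only finitely many $k$ (at most $\operatorname{rank}W\cdot[K:\QQ_p]$ of them) matter. It therefore suffices to treat a single $\CC_p$-semilinear representation $X:=\CC_p(k)\otimes W^+_{\mathrm{dR}}/t$, which is finite-dimensional over $\CC_p\otimes E$.

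For such $X$, I will use Sen's theorem: $X^{G_K}$-type invariants are detected by $D_{\mathrm{Sen}}(X)$ over $K_\infty^{\mathrm{cyc}}$. More precisely, $(X\otimes\rho)^{G_K}\otimes_K \CC_p\hookrightarrow X$, and $\rho$ factors through the finite quotient $\Gamma/\Gamma_n$. A nonzero invariant gives a $G_{K}$-stable line in $X$ isomorphic to $\CC_p\otimes\rho^{-1}$. Distinct characters $\rho$ (up to conjugacy over $E$) give linearly independent such lines by the usual Artin/Dedekind independence argument applied over $\CC_p$. This uses that Hom spaces between $\CC_p(\rho)$ and $\CC_p(\rho')$ vanish for $\rho\ne\rho'$, since $(\CC_p(\rho'\rho^{-1}))^{G_K}=0$ for a nontrivial finite-order character by Tate's theorem $\CC_p^{G_L}=L$ and $H^0$ of a nontrivial finite character twist of $L$. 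Hence the number of contributing $\rho$ is at most $\dim_{\CC_p}X$.

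Combining over the finitely many relevant $k$ gives a finite total bound. The main obstacle is the independence step: one has to make sure that conjugacy classes of characters with values in extensions $E_\rho$ of $E$ are correctly accounted for once scalars are extended to $\CC_p$. I would handle this by viewing everything over $\CC_p\otimes_{\QQ_p}E_\rho$ and decomposing along embeddings $E_\rho\to\CC_p$. Alternatively, I would use $W_e$ and the slope/Fargues--Fontaine description, but I expect the graded $\CC_p$ route to be the simplest.
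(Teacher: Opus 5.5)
Your argument has a genuine gap, and the de Rham route cannot be repaired. The claim that $(\CC_p(\chi))^{G_K}=0$ for a nontrivial finite-order character $\chi$ is false. If $\chi$ is trivial on $G_L$ with $L/K$ finite Galois, then $(\CC_p(\chi))^{G_K}=(L(\chi))^{\Gal(L/K)}$. By the normal basis theorem (or Hilbert 90), $L$ contains eigenvectors for every character of $\Gal(L/K)$, so this space is nonzero; equivalently, $\CC_p(\chi)\cong\CC_p$ as semilinear representations.

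The same happens over $\Bdr^+\supset\bar K$. One has $\Bdr^+\otimes_{\QQ_p}E_\rho(\rho)\cong(\Bdr^+\otimes_{\QQ_p}E)^{[E_\rho:E]}$; the paper uses exactly this fact for $\QQ_p[Q_n]\otimes\Bdr^+$ in the lemma preceding this one. Hence $H^0(G_K,W^+_{\mathrm{dR}}\otimes_E E_\rho(\rho))\cong H^0(G_K,W^+_{\mathrm{dR}})^{[E_\rho:E]}$, which is nonzero for \emph{every} $\rho$ as soon as $H^0(G_K,W^+_{\mathrm{dR}})\neq 0$ (e.g.\ $W$ trivial). So the lines $\CC_p\otimes\rho^{-1}$ you produce all coincide, and there is no independence over $\CC_p$. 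There is independence only over $K$, and $\CC_p$ is infinite-dimensional over $K$. No finite bound can come out of $W^+_{\mathrm{dR}}$ or its graded pieces, so bounding that piece was the wrong choice.

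The finiteness has to come from $W_e$, the route you mention only as an alternative; it is what the paper does. The point is that $\fB_e^{G_{K'}}=\QQ_p$ for every finite $K'/K$, since $\fB_e$ does not contain $\bar K$. Correspondingly, $\dim_E H^0(G_{K'},W_e)\le\operatorname{rank}W$ uniformly in $K'$, by the result of Berger cited in the paper.

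The argument then runs as follows.
\begin{enumerate}
\item One has $H^0(G_K,W\otimes\rho)\subseteq H^0(G_K,\Hom(\rho^*,W_e))$.
\item Since $E_\rho(\rho)$ is irreducible as an $E$-representation, a nonzero equivariant map $\rho^*\to W_e$ is injective.
\item Its image is fixed by $G_{K'}$, where $K'$ is cut out by $\ker\rho$, which forces $[E_\rho:E]\le\operatorname{rank}W$.
\item As $E_\rho$ contains the values of $\rho$, this bounds the order of $\rho$.
\item The topologically finitely generated abelian group $\Gamma_K$ has only finitely many characters of bounded order.
\end{enumerate}
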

\begin{proof} Suppose that $\rho$ is trivial on $\Gamma_n.$
	We denote by $E_\rho$ the residue field of $E[\Gamma/\Gamma_n]$ at the maximal ideal corresponding to $\rho.$ Observe that $E_\rho(\rho)$ is irreducible as an $E$-representation. 
Note that $H^0(G_K,W\otimes \rho)$ is a subspace of $H^0(G_K,W_e\otimes \rho) = H^0(G_K,\Hom(\rho^*, W_e)).$
	By irreducibility any such homomorphism is either $0$ or injective. And since the kernel of $\rho$ is open, this means that there exists an open subgroup $G_{K'}$ such that $W_e$ contains a $G_{K'}$-invariant subspace of dimension $\geq \dim_E{E_\rho}.$
	But $\dim_E H^0(K',W_e) \leq \operatorname{rank}(W_e)$ is bounded independently of $K'$ (cf. \cite[Corollary 2.14]{berger2021triangulable}). 
	Hence, as soon as the degree of $E_\rho$ exceeds this dimension, no such embedding can exist, but the field $E_\rho$ contains the image of $\rho.$ We conclude that there exists $n \gg 0 $ such that $H^0(K,\rho \otimes W) \neq 0 $ implies that $\rho$ factors over $\Gamma/\Gamma_n,$ but there are only finitely many $\rho$ of this form. 
\end{proof}
\begin{prop} \label{prop:smoothrank}
	Let $W$ be an $E$-$B$-pair which is de Rham at $J \subset \Hom(K,E).$ Let
	$N = \sum_{\sigma \in J} \dim_E(H^0(G_K,W_{\mathrm{dR},\sigma}^+)) $. Then 
	\begin{enumerate}
		\item $H^1_{\text{Iw},g,J,\nabla}(W):= \varprojlim H^1_{g,J}(K_n,W)$ is a coadmissible $\mathcal{D}_\infty$-module of rank $[K:\QQ_p] \operatorname{rank}(W)-N.$
		\item The image of the natural map 
		$\varprojlim H^1(K_n,W(\delta)) \to \varprojlim H^1(K_n,W)$ is $H^1_{\text{Iw},g,J,\nabla}(W).$
	\end{enumerate}
\end{prop}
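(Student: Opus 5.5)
The plan is to work componentwise over the characters $\rho$ of $\Gamma$ with open kernel, using Shapiro's Lemma together with Lemma \ref{lem:projectivelimitofgrouprings}. By Shapiro, $H^1(K_n,W)\cong H^1(G_K,E[Q_n]\otimes_E W)$, and likewise for $W(\delta)$ and for the de Rham components $W_{\mathrm{dR},\sigma}$. The decomposition $E[Q_n]\cong\prod_{\rho}E_\rho$ is compatible in $n$, so the corestriction maps on the left correspond to the projections $E[Q_{n+1}]\to E[Q_n]$ on the right. Taking limits, $\varprojlim_n H^1(K_n,W)\cong\prod_\rho H^1(G_K,W\otimes_E E_\rho(\rho))$. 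The same holds for $W(\delta)$ and for each $W_{\mathrm{dR},\sigma}$, and in every case $e_\rho$ of the limit is the $\rho$-component. Here we use that each $E[Q_n]\otimes W$-cohomology is a finite product over $\rho$, so no $\varprojlim^1$ issue arises: the transition maps are projections onto sub-products and hence surjective.

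Next, $H^1_{g,J}$ is functorial for the decomposition. Since $H^1_{g,J}(K_n,W)$ is the kernel of a $E[Q_n]$-linear map, it equals $\prod_{\rho\mid Q_n}H^1_{g,J}(G_K,W\otimes E_\rho(\rho))$. Kernels commute with inverse limits, so
$$H^1_{\text{Iw},g,J,\nabla}(W)\cong\prod_\rho H^1_{g,J}(G_K,W\otimes_E E_\rho(\rho)).$$
Each factor is finite dimensional, so Proposition \ref{prop:coad1} gives coadmissibility.

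To compute the rank, apply Lemma \ref{lem:dimensionformula} to the $B$-pair $W_\rho:=W\otimes_E E_\rho(\rho)$. It is still de Rham at $J$, since $\rho$ is potentially trivial, and $W_\rho(\delta)=W(\delta)\otimes\rho$. Working over $E_\rho$-dimensions, $\operatorname{rank}_{E_\rho}(W_\rho)=\operatorname{rank}(W)$. The argument of the lemma preceding Remark \ref{rem:HTweightsless1} (twisting by a finite-order character inside $\Bdr^+$, which contains $\bar K$) shows that $\dim_{E_\rho}H^0(G_K,W_{\rho,\mathrm{dR},\sigma}^+)=\dim_E H^0(G_K,W^+_{\mathrm{dR},\sigma})$, so $N$ is unchanged. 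This yields
$$\dim_{E_\rho}e_\rho H^1_{\text{Iw},g,J,\nabla}=[K:\QQ_p]\operatorname{rank}W-N+\dim H^2(G_K,W(\delta)\otimes\rho)+\dim H^0(G_K,W\otimes\rho).$$
The $H^0$ term vanishes for almost all $\rho$ by Lemma \ref{lem:trivialH0}. For the $H^2$ term I would use Tate/Nakamura duality for $B$-pairs: $H^2(G_K,W(\delta)\otimes\rho)$ is dual to $H^0(G_K,W(\delta)^\vee(1)\otimes\rho^{-1})$, and Lemma \ref{lem:trivialH0} applied to the $B$-pair $W(\delta)^\vee(1)$ again gives vanishing for all but finitely many $\rho$. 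Hence the module is good, of the claimed rank. This duality step is the main point needing an external input (Nakamura's duality), though it is standard.

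For (2), the exact sequence of Lemma \ref{lem:basicsequence}, applied to each $W_\rho$, shows that the image of $H^1(G_K,W_\rho(\delta))\to H^1(G_K,W_\rho)$ is $H^1_{g,J}(W_\rho)$. By the same compatibility, the map $\varprojlim H^1(K_n,W(\delta))\to\varprojlim H^1(K_n,W)$ is the product over $\rho$ of these maps. A product of maps has image equal to the product of the images, so the image is $\prod_\rho H^1_{g,J}(W_\rho)=H^1_{\text{Iw},g,J,\nabla}(W)$. Alternatively, one can argue directly: the surjections $H^1(K_n,W(\delta))\to H^1_{g,J}(K_n,W)$ have finite-dimensional kernels (images of $H^0$'s), so they form a Mittag-Leffler system and remain surjective in the limit.
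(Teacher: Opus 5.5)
Your proposal is correct and follows the paper's own proof. You decompose into character components via Shapiro's Lemma and Lemma~\ref{lem:projectivelimitofgrouprings}, apply the dimension formula componentwise with Lemma~\ref{lem:trivialH0} killing the $H^0$ terms and Tate duality killing the $H^2$ terms, and deduce (2) from the levelwise surjections of Lemma~\ref{lem:basicsequence} by Mittag-Leffler (or, equivalently, as a product of componentwise images). You also spell out details the paper leaves implicit, such as why twisting by a finite-order $\rho$ preserves $N$ and the de Rham property at $J$.
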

\begin{proof} 
	
	By \ref{lem:trivialH0}, we have that $H^0(G_K,E[Q_n] \otimes W)$ is bounded independently of $n,$ since all but finitely many components are trivial. Dually the same holds for the $H^2$ term. Hence the only contribution comes from the first two terms in the dimension formula of Lemma \ref{lem:dimensionformula}. Decomposing into character components and applying the formula componentwise shows that $H^1_{\text{Iw},g,J,\nabla}(W)$ is good and the rank of $H^1_{\text{Iw},g,J,\nabla}(W)$ is $[K:\QQ_p] \operatorname{rank}(W)-N.$ 
 For the second part we have on every level a surjection
$$H^1(G_K,E[\Gamma/\Gamma_n] \otimes W(\delta)) \to H^1_{g,J}(G_K,E[\Gamma/\Gamma_n] \otimes W)$$ by Lemma \ref{lem:basicsequence}.
The kernels at each level are finite dimensional and hence form a Mittag-Leffler system, which implies the claim.
\end{proof}
\begin{rem} The formula from Proposition \ref{prop:smoothrank} takes a simpler shape, if all weights of $W$ at $\sigma \in J$ are non-positive, in which case $N = \operatorname{rank}(W) \cdot |J|,$ i.e., the rank of $H^1_{\text{Iw},g,J,\nabla}(W)$ is $$([K:\QQ_p]-|J|) \operatorname{rank}(W).$$
\end{rem}

\begin{lem} 
	There are canonical isomorphisms 
	$$H^i_{\text{Iw}}(T) \cong \varprojlim_n H^i(K_n,T) \cong\varprojlim_n H^i(K,o_{E}[G(K_n,K)] \otimes T) \cong H^i(G_K,\Lambda \otimes_{o_E}T) $$
\end{lem}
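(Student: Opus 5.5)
The plan is to prove the three isomorphisms from left to right. The first is the definition of $H^i_{\text{Iw}}(T)$ (here we use the integral version, before inverting $p$). For the second we apply Shapiro's Lemma at each finite level: $o_E[G(K_n,K)] \otimes_{o_E} T$ is the induced module $\operatorname{Ind}_{G_{K_n}}^{G_K} T$, so $H^i(K,o_E[G(K_n,K)]\otimes T)\cong H^i(K_n,T)$. We must then check compatibility with the transition maps. Under Shapiro, corestriction $H^i(K_{n+1},T)\to H^i(K_n,T)$ corresponds to the map induced by the natural projection $o_E[G(K_{n+1},K)]\to o_E[G(K_n,K)]$. This is a standard verification on the level of cochains: the Shapiro isomorphism is evaluation at the identity composed with restriction, and corestriction is given by summing over coset representatives, which is exactly what the projection of group rings does on the coefficient. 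Taking $\varprojlim_n$ then gives the second isomorphism.

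For the third isomorphism we write $\Lambda\otimes_{o_E}T=\varprojlim_n o_E[G(K_n,K)]\otimes_{o_E} T$. This holds because $T$ is finite free over $o_E$, so the tensor product commutes with the limit. Each term is a finitely generated $o_E$-module, hence compact, and the transition maps are surjective. We then compare continuous cochains: $C^\bullet_{cts}(G_K,\varprojlim_n A_n)=\varprojlim_n C^\bullet_{cts}(G_K,A_n)$ for $A_n$ compact Hausdorff, since continuous maps into a limit are compatible systems of continuous maps. This yields a Milnor exact sequence
$$0\to \varprojlim\nolimits^1_n H^{i-1}(K,A_n)\to H^i(G_K,\Lambda\otimes T)\to \varprojlim_n H^i(K,A_n)\to 0,$$
where we also use that the cochain system $(C^j_{cts}(G_K,A_n))_n$ has surjective transition maps. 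Surjectivity holds because a continuous map into $A_n$ lifts continuously to $A_{n+1}$: the projection is a surjection of profinite $o_E$-modules and admits a continuous set-theoretic section.

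The main obstacle is the vanishing of the $\varprojlim^1$ term. Here we use that $H^{i-1}(K,A_n)\cong H^{i-1}(K_n,T)$ is a finitely generated $o_E$-module, since the cohomology of $p$-adic fields with coefficients in a finitely generated $\ZZ_p$-module is finitely generated. A projective system of finitely generated $o_E$-modules, i.e. of compact (profinite) modules with continuous maps, satisfies $\varprojlim^1=0$. To see this, topologize everything compactly and use that inverse limits of nonempty compact spaces are nonempty; equivalently, the images form a Mittag-Leffler-type system in the profinite category. Alternatively, one can cite Tate's or Nekovář's treatment of continuous cohomology with profinite coefficients, which gives $H^i(G_K,\varprojlim A_n)=\varprojlim H^i(G_K,A_n)$ directly for finitely generated $o_E$-modules $A_n$. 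Combining this with the previous step gives the canonical chain of isomorphisms. Canonicity follows since every map used is functorial in $T$.
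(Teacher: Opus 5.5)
Your proof is correct. It is the standard argument: Shapiro's lemma at each finite level, with corestriction matching the projection $o_E[G(K_{n+1},K)]\to o_E[G(K_n,K)]$, then the Milnor sequence for the tower of continuous cochain complexes, and finally $\varprojlim^1=0$ for a tower of finitely generated $o_E$-modules. The paper states this lemma without proof and treats it as standard; the Fukaya--Kato result cited in the next lemma rests on the same ideas. So there is no alternative route to compare against.

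Two small points of precision:
\begin{enumerate}
\item The first isomorphism, and the identity $\Lambda=\varprojlim_n o_E[\Gamma/\Gamma_n]$ that you use, rely on the $K_n$ being cofinal among the finite subextensions of $K_\infty/K$. This holds because $K_\infty=\bigcup K_n$, but it should be said.
\item For surjectivity of the cochain transition maps, argue from the module structure: $A_{n+1}\to A_n$ is a surjection of finite free $o_E$-modules, hence $o_E$-linearly split. Do not appeal to profinite spaces in general, since an arbitrary continuous surjection of profinite spaces need not admit a continuous section.
\end{enumerate}
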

\begin{lem}\label{lem:basechangespectralsequence} 
Let $T \subset V$ be a lattice inside an $E$-linear $p$-adic Galois representation. Let $A$ be a quotient of $\Lambda = o_E\llbracket \Gamma \rrbracket$ then the natural map induces an isomorphism
$$ A \otimes_{\Lambda}^\mathbb{L} \mathbf{R}\Gamma_{cts}(G_K,\Lambda \otimes_{o_E}T) \to \mathbf{R}\Gamma_{cts}(G_K,T\otimes_{o_E} A). $$
\end{lem}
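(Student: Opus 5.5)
The plan is the standard ``perfect complex plus completed base change'' argument, as in Nekovář's Selmer complexes or Fukaya--Kato. We reduce first to finite (discrete) quotients of $\Lambda$ and then pass to the limit.

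\textbf{Step 1: perfectness.} Write $C := C^\bullet_{cts}(G_K,\Lambda\otimes_{o_E}T)$. We show that $C$ is quasi-isomorphic to a bounded complex $P$ of finite free $\Lambda$-modules. Recall that $G_K$ has cohomological dimension $2$ and that $H^i(G_K,M)$ is finite for finite discrete $M$. These two facts, together with the Lemma above ($H^i_{\text{Iw}}(T)\cong H^i(G_K,\Lambda\otimes T)$, a finitely generated $\Lambda$-module since $\Lambda$ is noetherian and compact), give the usual criterion. Concretely, I would take the bounded, finitely generated cohomology plus finite Tor-amplitude, the latter detected by reduction modulo the maximal ideal $\mathfrak m$ of $\Lambda$. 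This is exactly Nekovář's Proposition 8.4.8.1 / Fukaya--Kato 1.6.5, which I would cite after checking its hypotheses.

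\textbf{Step 2: base change for open ideals.} Let $J\subset\Lambda$ be an open ideal, so that $\Lambda/J$ is finite. We claim
\[\Lambda/J\otimes_\Lambda C^n \cong C^n_{cts}(G_K,\Lambda/J\otimes T)\]
termwise. Surjectivity holds because $G_K^n$ is profinite and a continuous surjection of profinite sets admits a continuous section. For the kernel, choose generators $a_1,\dots,a_k$ of $J$ (using that $\Lambda$ is noetherian); the surjection $\Lambda^k\to J$ of compact modules again has a continuous set-theoretic section, so the kernel is $C^n_{cts}(G_K,J\otimes T)=J\,C^n$.

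It remains to upgrade the termwise tensor product to the derived one. The complex $C^\bullet$ itself is not known to be flat, so I would instead use $P$ from Step 1. The comparison map $P\to C$ is a quasi-isomorphism compatible with reduction modulo $J$: both sides compute $\mathbf R\Gamma(G_K,-)$ of the compact module, via $\mathbf R\lim$ over open $J$. Hence $\Lambda/J\otimes^{\mathbb L}_\Lambda C\simeq \Lambda/J\otimes_\Lambda P \simeq C^\bullet_{cts}(G_K,\Lambda/J\otimes T)$. I expect this identification, namely that the perfect representative is compatible with all finite reductions, to be the main obstacle. To handle it, I would construct $P$ together with compatible maps to $C^\bullet(G_K,\Lambda/J\otimes T)$ for all $J$, following Fukaya--Kato's construction.

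\textbf{Step 3: general quotients.} Let $A=\Lambda/\mathfrak a$ be an arbitrary quotient, for instance $o_E\llbracket\Gamma'\rrbracket$. Then $A=\varprojlim_J A/JA$ over open $J$. On the left-hand side, $A\otimes_\Lambda P$ is a bounded complex of finite free $A$-modules, hence $\mathfrak m$-adically complete, so
\[A\otimes_\Lambda P\simeq \mathbf R\lim_J (A/JA\otimes_\Lambda P).\]
On the right-hand side, $C^\bullet_{cts}(G_K,A\otimes T)=\varprojlim_J C^\bullet_{cts}(G_K,A/JA\otimes T)$ with surjective transition maps (by the section argument of Step 2), so it also computes the $\mathbf R\lim$. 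Applying Step 2 to the open ideals $J+\mathfrak a$ identifies the two inverse systems compatibly. Hence the natural map $A\otimes^{\mathbb L}_\Lambda \mathbf R\Gamma_{cts}(G_K,\Lambda\otimes T)\to\mathbf R\Gamma_{cts}(G_K,A\otimes T)$ is an isomorphism.
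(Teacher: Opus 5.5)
There is a genuine gap, and it sits exactly where the content of the lemma lies.

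\textbf{The gap in Step 2.} Your Step 3 (passing from open ideals to an arbitrary quotient by limits) is fine. So is the termwise identification $\Lambda/J\otimes_\Lambda C^n\cong C^n_{cts}(G_K,\Lambda/J\otimes T)$ in Step 2, which works for any ideal, not only open ones. The problem is the ``Hence'' in Step 2.

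From a quasi-isomorphism $P\to C$ with $P$ bounded free you get $\Lambda/J\otimes^{\mathbb L}_\Lambda C\simeq\Lambda/J\otimes_\Lambda P$. But the map $\Lambda/J\otimes_\Lambda P\to\Lambda/J\otimes_\Lambda C=C^\bullet_{cts}(G_K,\Lambda/J\otimes T)$ is a quasi-isomorphism if and only if $\Lambda/J\otimes^{\mathbb L}_\Lambda C\to\Lambda/J\otimes_\Lambda C$ is. That is precisely the lemma for $A=\Lambda/J$.

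Your justification (``both sides compute $\mathbf R\Gamma$ via $\mathbf R\lim$'') is circular. Both $P\simeq\mathbf R\lim_J P/JP$ and $C\simeq\mathbf R\lim_J C^\bullet_{cts}(G_K,\Lambda/J\otimes T)$ hold, but matching these limits requires exactly the levelwise quasi-isomorphisms you are trying to deduce. Constructing ``compatible maps'' $P\to C^\bullet_{cts}(G_K,\Lambda/J\otimes T)$ does not help unless you know they induce quasi-isomorphisms on $P/JP$.

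The sketch of Step 1 has the same issue. Detecting Tor-amplitude by reduction modulo the Jacobson radical presupposes derived base change to that residue ring. Also, $\Lambda$ is only semi-local when $\Gamma$ has prime-to-$p$ torsion, as for $\Gamma_K\cong o_K^\times$.

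\textbf{What the paper does.} It simply cites Fukaya--Kato, Proposition 1.6.5\,(3). That result is the base-change statement itself. It applies directly to any quotient $A$: ideals of the compact noetherian ring $\Lambda$ are finitely generated, hence closed, so $A$ is again a compact ring of the required type. With that citation your Step 3 becomes unnecessary.

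\textbf{A self-contained fix.} The missing input is flatness of the cochain modules together with $\operatorname{cd}_p(G_K)=2$.
\begin{enumerate}
\item For a profinite set $X$ and a finitely generated $\Lambda$-module $N$ one has $N\otimes_\Lambda C(X,\Lambda)\cong C(X,N)$. To see this, take a finite presentation of $N$ and use continuous sections of surjections of profinite spaces.
\item The functor $N\mapsto C(X,N)$ preserves injections. Hence every $C^n$ is $\Lambda$-flat.
\item Consequently $N\otimes_\Lambda C=C^\bullet_{cts}(G_K,N\otimes T)$ is acyclic in degrees $\ge 3$ for every finitely generated $N$, by cohomological dimension and Mittag-Leffler.
\item This makes the sequences $0\to Z^i\to C^i\to Z^{i+1}\to 0$ pure for $i\ge 4$. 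It follows that $Z^i$ is flat for every $i\ge 2$.
\item Thus $\tau_{\le2}C=[C^0\to C^1\to Z^2]$ is a bounded complex of flat modules, and $C/\tau_{\le 2}C$ is acyclic with flat cycles.
\item Therefore $A\otimes^{\mathbb L}_\Lambda C\simeq A\otimes_\Lambda\tau_{\le2}C\simeq A\otimes_\Lambda C=C^\bullet_{cts}(G_K,A\otimes T)$ for every quotient $A$.
\end{enumerate}
This route needs neither perfectness nor limits. In the literature the argument is typically run first at the finite levels $\Lambda/J$, where the cochain modules are filtered colimits of finite free modules. The perfect complex $P$ is then assembled as a limit of the resulting compatible perfect complexes, which is the reverse of the order in your Step 2.
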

\begin{proof}
This is a special case of \cite[Proposition 1.6.5 (3)]{fukaya2276851formulation}.
\end{proof}
The interesting part of the corresponding spectral sequence is the following: 
{\footnotesize \[\begin{tikzcd}
	{\operatorname{Tor}_3^{\Lambda}(A,H^2_{\text{Iw}}(T))} & {\operatorname{Tor}_2^{\Lambda}(A,H^2_{\text{Iw}}(T))} & {\operatorname{Tor}_1^{\Lambda}(A,H^2_{\text{Iw}}(T))} & {\operatorname{Tor}_0^{\Lambda}(A,H^2_{\text{Iw}}(T))} \\
	{\operatorname{Tor}_3^{\Lambda}(A,H^1_{\text{Iw}}(T))} & {\operatorname{Tor}_2^{\Lambda}(A,H^1_{\text{Iw}}(T))} & {\operatorname{Tor}_1^{\Lambda}(A,H^1_{\text{Iw}}(T))} & {\operatorname{Tor}_0^{\Lambda}(A,H^1_{\text{Iw}}(T))} \\
	&&& {}
	\arrow[from=1-1, to=2-3]
	\arrow[from=1-2, to=2-4]
\end{tikzcd}\]}
The terms above and below are $0$ since $H^i_{\text{Iw}}(T) = 0$ for $i \notin \{1,2\}$ and on the left we have $\operatorname{Tor}_i^{\Lambda}(A,-)= 0$ for $i > d+1$ if $\Gamma$ has no $p$-power torsion since $\Lambda$ has global dimension $d+1.$
From Lemma \ref{lem:basechangespectralsequence} we obtain in low degree terms (compare with \cite[Equation (47)]{venjakob2014kato}): 
\begin{lem} \label{lem:torsequence} Let $\mathfrak{J} \subset \Lambda$ be an ideal.
	Then we have an exact sequence
	\begin{align*}&\operatorname{Tor}^{\Lambda}_1(\Lambda/\mathfrak{J},H^1_{\text{Iw}}(T)) \to H^0(G_K,\Lambda/\mathfrak{J} \otimes_{o_E}T) \to \operatorname{Tor}^{\Lambda}_2(\Lambda/\mathfrak{J},H^2_{\text{Iw}}(T)) \\
	\to 	&\Lambda / \mathfrak{J} \otimes_{\Lambda}H^1_{\text{Iw}}(T) \to H^1(\Lambda/\mathfrak{J} \otimes_{o_E}T) \to \operatorname{Tor}^{\Lambda}_1(\Lambda/\mathfrak{J}, H^2_{\text{Iw}}(T)) \to 0
	\end{align*}
\end{lem}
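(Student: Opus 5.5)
The plan is to read off the sequence from the hyperhomology spectral sequence attached to the isomorphism of Lemma \ref{lem:basechangespectralsequence} with $A = \Lambda/\mathfrak{J}$. First I identify $\mathbf{R}\Gamma_{cts}(G_K,\Lambda\otimes_{o_E}T)$ as the complex computing $H^i_{\text{Iw}}(T)$; this uses the canonical isomorphisms of the preceding lemma. The relevant spectral sequence is the Tor spectral sequence for the derived tensor product of a complex:
$$E_2^{-i,j} = \operatorname{Tor}_i^{\Lambda}(\Lambda/\mathfrak{J}, H^j_{\text{Iw}}(T)) \Longrightarrow H^{j-i}(G_K, \Lambda/\mathfrak{J}\otimes_{o_E} T).$$
Its differential $d_2$ goes from $E_2^{-i,j}$ to $E_2^{-i+2,j-1}$. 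This matches the diagonal arrows drawn above, from $\operatorname{Tor}_{i}(H^2)$ to $\operatorname{Tor}_{i-2}(H^1)$.

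The key input is the vanishing $H^j_{\text{Iw}}(T)=0$ for $j\notin\{1,2\}$. Here $H^0_{\text{Iw}}=0$ because it is an inverse limit along corestriction of finite modules, and $H^j$ vanishes for $j>2$ by cohomological dimension. With only two nonzero rows, the spectral sequence degenerates at $E_3$, and the only nonzero differentials are
$$d_2\colon \operatorname{Tor}_{i+2}(\Lambda/\mathfrak{J},H^2_{\text{Iw}}) \to \operatorname{Tor}_{i}(\Lambda/\mathfrak{J},H^1_{\text{Iw}}).$$
So the sequence is the usual two-row long exact sequence:
$$\cdots \to \operatorname{Tor}_{n+1}(H^2)\xrightarrow{d_2}\operatorname{Tor}_{n-1}(H^1) \to H^{2-n}(\Lambda/\mathfrak{J}\otimes T) \to \operatorname{Tor}_{n}(H^2) \xrightarrow{d_2} \operatorname{Tor}_{n-2}(H^1)\to\cdots.$$
Specialising to total degrees $0$ and $1$ gives the displayed terms:
\begin{itemize}
\item $H^1$ sits between $\operatorname{Tor}_0(H^1)$ and $\operatorname{Tor}_1(H^2)$;
\item $H^0$ sits between $\operatorname{Tor}_1(H^1)$ and $\operatorname{Tor}_2(H^2)$;
\item the map $\operatorname{Tor}_2(H^2)\to\operatorname{Tor}_0(H^1)$ connecting them is $d_2$.
\end{itemize}
The final $\to 0$ after $\operatorname{Tor}_1(H^2)$ holds because the next term is the target of $d_2$ from $\operatorname{Tor}_1(H^2)$. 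That target would be $\operatorname{Tor}_{-1}(H^1)=0$.

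Alternatively, to avoid spectral sequence bookkeeping, I would use a truncation triangle $H^1_{\text{Iw}}[-1]\to C\to H^2_{\text{Iw}}[-2]\to$ for $C=\mathbf{R}\Gamma(G_K,\Lambda\otimes T)$. Applying $\Lambda/\mathfrak{J}\otimes^{\mathbb{L}}_\Lambda-$ and taking the long exact cohomology sequence gives the same sequence directly. I would present it this way, since it makes exactness transparent.

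The main subtle point is justifying that $C$ is concentrated in degrees $[1,2]$ as a complex, which the truncation approach needs, and that Lemma \ref{lem:basechangespectralsequence} applies to an arbitrary quotient $\Lambda/\mathfrak{J}$. The first follows from $H^0_{\text{Iw}}=0$ and cohomological dimension $2$. The second is exactly the cited statement of Fukaya–Kato, so it is not a real obstacle.
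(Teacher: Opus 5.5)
Your proposal is correct and is the paper's argument: the lemma is the low-degree part of the two-row Tor spectral sequence attached to Lemma \ref{lem:basechangespectralsequence} (equivalently, your truncation triangle $H^1_{\text{Iw}}(T)[-1]\to C\to H^2_{\text{Iw}}(T)[-2]$ tensored with $\Lambda/\mathfrak{J}$), and the map $\operatorname{Tor}_2(H^2)\to\operatorname{Tor}_0(H^1)$ is the differential $d_2$. One side justification needs fixing: $H^0_{\text{Iw}}(T)=0$ does not follow from the $H^0(K_n,T)$ being finite, since they are finitely generated torsion-free $o_E$-modules and inverse limits of finite modules need not vanish anyway; the correct reason is that the $T^{G_{K_n}}$ stabilise, after which corestriction is multiplication by $[K_m:K_n]$, whose $p$-adic valuation is unbounded because $\Gamma$ is infinite.
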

In ``practice'' $J$ will be the augmentation ideal of an open subgroup, which is given by a regular sequence. 
\begin{prop} \label{prop:torbound} Let $R$ be an $S$-algebra, where $S$ is a noetherian domain.
Let $I \subseteq R$ be an ideal generated by a regular sequence of length $r$ and $M$ an $R$-module which is $S$-finitely generated, then $\operatorname{Tor}_i^{R}(R/I,M)$ is $S$-finitely generated and there exists $C$ depending only on $M$ and $r$ such that $$\operatorname{rank}_{S} \operatorname{Tor}_i^{R}(R/I,M) \leq C$$ for all $i \geq 0.$
\end{prop}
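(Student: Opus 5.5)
Let $I=(x_1,\dots,x_r)$ with $x_1,\dots,x_r$ an $R$-regular sequence. The plan is to compute the Tor groups with the Koszul complex. Since the sequence is regular, the Koszul complex $K_\bullet(x_1,\dots,x_r;R)$ is a free resolution of $R/I$ of length $r$, with $K_i=\bigwedge^i R^r\cong R^{\binom{r}{i}}$. Hence
$$\operatorname{Tor}_i^R(R/I,M)\cong H_i\bigl(K_\bullet(x_1,\dots,x_r;R)\otimes_R M\bigr)=H_i\bigl(K_\bullet(x_1,\dots,x_r;M)\bigr).$$
In particular the groups vanish for $i>r$. The Koszul complex on $M$ has terms $M^{\binom{r}{i}}$, and its differentials are $R$-linear, hence $S$-linear.

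Since $M$ is finitely generated over the noetherian ring $S$, each term $M^{\binom{r}{i}}$ is a finitely generated $S$-module. Therefore every subquotient of it is finitely generated over $S$, and this includes the homology. This gives the first claim.

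For the rank bound, I use that $S$ is a domain, so $\operatorname{rank}_S$ (dimension after tensoring with $\operatorname{Frac}(S)$) is additive on exact sequences and monotone under subquotients. The $i$-th homology is a subquotient of $M^{\binom{r}{i}}$, so
$$\operatorname{rank}_S\operatorname{Tor}_i^R(R/I,M)\le \binom{r}{i}\operatorname{rank}_S M\le 2^r\operatorname{rank}_S M.$$
Thus $C:=2^r\operatorname{rank}_S M$ works for all $i$, and it depends only on $M$ and $r$. If one prefers a bound on the number of generators instead, the same argument works with $\operatorname{rank}_S M$ replaced by the minimal number of $S$-generators of $M$.

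The only step that needs checking is that the Koszul complex is acyclic and so resolves $R/I$. This is the standard fact for a regular sequence, proved by induction on $r$ using the mapping-cone description $K(x_1,\dots,x_r)=\operatorname{Cone}\bigl(x_r\colon K(x_1,\dots,x_{r-1})\to K(x_1,\dots,x_{r-1})\bigr)$. Beyond that the argument is routine. Regularity is used only to identify $\operatorname{Tor}$ with Koszul homology; the finiteness over $S$ comes entirely from $M$.
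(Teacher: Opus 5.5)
Your proof is correct and matches the paper's: the Koszul resolution of $R/I$ exhibits $\operatorname{Tor}_i^R(R/I,M)$ as an $S$-subquotient of $M^{\oplus\binom{r}{i}}$, which gives finite generation over $S$ and the bound $\binom{r}{i}\operatorname{rank}_S M$. Your closing remark about generator counts is false, because minimal numbers of generators do not pass to submodules: for a field $k$, take $S=k[x,y]$, $\mathfrak{m}=(x,y)$, $R=S[z]$, $I=(z)$ and $M=S/\mathfrak{m}^n$ with $z$ acting as $x$; then $\operatorname{Tor}_1^R(R/I,M)=\mathfrak{m}^{n-1}/\mathfrak{m}^n$ needs $n$ generators although $M$ is cyclic, but the statement only concerns ranks and does not depend on this remark.
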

\begin{proof}
Since the Tor groups can be computed using a Koszul complex we see that $\operatorname{Tor}_i(R/I,M)$ is a subquotient of $M^{\oplus \binom{r}{i}}$ and hence has $S$-rank bounded by $\binom{r}{i} \operatorname{rank}_{S}(M).$
\end{proof}

\subsection{Specialising to $H^1_g$}
When $J$ contains all embeddings then the group $H^1_{g,J}$ is, of course, just the usual $H^1_{g}.$
We begin by recalling some of Berger's results in the cyclotomic case (cf. \cite{Berger2005}) which have recently in \cite{Berger2026} been extended to the twisted cyclotomic case.
\begin{thm}(\cite{Berger2005}, \cite{Berger2026})
	Let $V$ be a de Rham $\QQ_p$-linear representation of $G_K$ and $K_\infty/K$ an extension containing an unramified twist of the cyclotomic extension. Let $\operatorname{Fil^1V}$ be the biggest sub representation having all Hodge--Tate weights $\geq 1.$ Then:
	\begin{enumerate} 
\item	$$H^1_{\text{Iw}}(\operatorname{Fil}^1V) \subseteq H^1_{g,\text{Iw}}(V)$$ and the quotient is a finite $\QQ_p$-dimensional vector space. 
	\item If $V$ admits no sub-quotient fixed under the action of $H_K,$ then the inclusion above is an equality.
	\end{enumerate}
\end{thm}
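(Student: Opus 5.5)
The plan is to split the statement into the containment, which is a formal consequence of the Hodge--Tate weights of $\operatorname{Fil}^1V$, and the finiteness of the quotient, which carries the real content. Throughout we pass between $T$-lattices and $[1/p]$ freely, using $H^1_{\text{Iw}}(V)=H^1(G_K,\Lambda\otimes T)[1/p]$.

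\textbf{Containment and injectivity.} Since $\operatorname{Fil}^1V\subseteq V$ is a subrepresentation of a de Rham representation, it is de Rham. Its Hodge--Tate weights are $k_i\geq 1$, so over $K_n$ we have
\[
\Bdr^+\otimes\operatorname{Fil}^1V\cong\bigoplus_i t^{k_i}\Bdr^+ .
\]
By Tate's computation $H^1(G_{K_n},t^k\Bdr^+)=0$ for $k\geq1$, hence $H^1(G_{K_n},\Bdr^+\otimes\operatorname{Fil}^1V)=0$. Consequently the composite $H^1(K_n,\operatorname{Fil}^1V)\to H^1(K_n,V)\to H^1(K_n,\Bdr\otimes V)$ factors through zero. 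So already at every finite level the image of $H^1(K_n,\operatorname{Fil}^1V)$ lies in $H^1_g(K_n,V)$, and passing to the limit gives $H^1_{\text{Iw}}(\operatorname{Fil}^1V)\to H^1_{g,\text{Iw}}(V)$. This map is injective because $H^0_{\text{Iw}}$ of the quotient $V'':=V/\operatorname{Fil}^1V$ vanishes: $\varprojlim$ of $H^0$ under corestriction is zero when $\Gamma$ is infinite.

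\textbf{Reduction to the quotient.} Apply $H^1_{\text{Iw}}$ to $0\to\operatorname{Fil}^1V\to V\to V''\to0$. By functoriality of the $g$-condition, the image of $H^1_{g,\text{Iw}}(V)$ in $H^1_{\text{Iw}}(V'')$ lies in $H^1_{g,\text{Iw}}(V'')$. It therefore suffices to prove that $H^1_{g,\text{Iw}}(V'')$ is finite-dimensional, and that it vanishes under the hypothesis of (2). Maximality of $\operatorname{Fil}^1V$ means precisely that $V''$ has no nonzero subrepresentation all of whose Hodge--Tate weights are $\geq1$.

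\textbf{The key step, for $V''$.} I would use the Fontaine/Cherbonnier--Colmez isomorphism $H^1_{\text{Iw}}(V'')\cong D(V'')^{\psi=1}$ (or the Robba version $D_{\mathrm{rig}}^\dagger(V'')^{\psi=1}$, as in Berger's work). The projection of $y\in D_{\mathrm{rig}}^\dagger(V'')^{\psi=1}$ to level $n$ is described via $\varphi^{-n}(y)$ in $K_n((t))\otimes D_{\mathrm{dR}}(V'')$, and Bloch--Kato's $H^1_g$-condition becomes a condition that $\varphi^{-n}(y)$ lies in an appropriate $\Bdr^+$-lattice, essentially the one cut out by $\operatorname{Fil}^0$. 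If this holds for all $n$, then the $\Lambda$-span of such $y$ generates a $(\varphi,\Gamma)$-submodule of $D_{\mathrm{rig}}^\dagger(V'')$ whose de Rham lattice at every level is contained in the twist by $t$, i.e.\ it has all Hodge--Tate weights $\geq1$. Saturating, I would try to show this submodule is étale, using slope considerations: it is a saturated submodule of an étale module whose $\psi=1$ part is large. It then comes from a subrepresentation of $V''$ with all weights $\geq1$, which must be zero. Hence $H^1_{g,\text{Iw}}(V'')$ is $\Lambda$-torsion.

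Torsion universal norms can only come from the $H^0$-type contributions. In Lemma \ref{lem:torsequence} with $\mathfrak{J}=I_n$, these are the terms $H^0(K_n,V'')$ and $\operatorname{Tor}_2(\Lambda/I_n,H^2_{\text{Iw}})$. They are bounded independently of $n$ by Lemma \ref{lem:trivialH0} and Proposition \ref{prop:torbound}. Hence the torsion part is finite-dimensional, and it vanishes as soon as $V''$ (equivalently $V$) has no subquotient fixed by $H_K$, since then $H^1_{\text{Iw}}$ is torsion-free and the relevant $H^0(K_\infty,\cdot)$ vanish. This yields (2).

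For the twisted cyclotomic case, I would restrict to the closed subgroup of $\Gamma_K$ cutting out the unramified twist of the cyclotomic $\ZZ_p$-extension. Universal norms along $K_\infty$ are in particular universal norms along this subextension, and the unramified twist only changes $\varphi$ by a unit.

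\textbf{Main obstacle.} I expect the saturation/étaleness step for $V''$ to be the hardest: passing from ``$\varphi^{-n}(y)$ lies in the right lattice for all $n$'' to an honest subrepresentation with weights $\geq1$. This requires controlling the $\Gamma$-span of $y$ inside $D_{\mathrm{rig}}^\dagger$ and a slope argument. Here I would first try Berger's Wach-module/$\mathbb{N}_{\mathrm{dR}}$ formalism, which turns the de Rham lattice condition into membership in an explicit $\varphi$-stable sub-lattice.
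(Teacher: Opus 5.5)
The paper does not prove this theorem; it quotes it from Berger, so I assess your outline on its own terms. Your opening steps are correct and standard:
\begin{itemize}
\item $H^1(G_{K_n},t^k\Bdr^+)=0$ for $k\geq 1$ gives the containment;
\item $H^0_{\text{Iw}}(V/\operatorname{Fil}^1V)=0$ gives injectivity;
\item functoriality of $H^1_g$ and maximality of $\operatorname{Fil}^1V$ reduce everything to showing that $H^1_{g,\text{Iw}}(V'')$ is finite-dimensional (and zero under (2)) when $\operatorname{Fil}^1V''=0$.
\end{itemize}
The gap is in the key step, and it begins with your translation of the $H^1_g$-condition. For $y\in D^\dagger_{\mathrm{rig}}(V'')$ one \emph{always} has $\varphi^{-n}(y)\in\Bdr^+\otimes V''$, i.e.\ $\varphi^{-n}(y)\in\operatorname{Fil}^0(K_n((t))\otimes_K D_{\mathrm{dR}}(V''))$. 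So ``lying in the lattice cut out by $\operatorname{Fil}^0$'' is vacuous. The actual condition comes from Berger's explicit formula $\exp^*(\mathrm{pr}_n y)=p^{-n}\partial_{V''}(\varphi^{-n}(y))$ together with $\ker\exp^*=H^1_g$: the $t^0$-coefficient of $\varphi^{-n}(y)$ must vanish. This is a lattice condition, namely $\varphi^{-n}(y)\in tK_n[[t]]\otimes_K D_{\mathrm{dR}}(V'')$, only if all Hodge--Tate weights of $V''$ are $\geq 0$. If $V''$ has a weight $-j<0$, the $t^{-j}$-coefficient, which lies in $K_n\otimes_K\operatorname{Fil}^jD_{\mathrm{dR}}(V'')$, is completely unconstrained. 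Then the set of $y$ satisfying the condition is not a $\fB^\dagger_{\mathrm{rig},K}$-module, and the $(\varphi,\Gamma)$-module spanned by $H^1_{g,\text{Iw}}(V'')$ inherits no constraint from it. Neither $\Gamma$ nor $\nabla=t\frac{d}{dt}$ can help, since both preserve $t$-degree. So your claim that the spanned module has ``de Rham lattice contained in the twist by $t$'' is unjustified exactly in the case that carries the content of the theorem.

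The obstacle you single out is also not the real one, and the tool you propose for it cannot work. Since $H^1_{\text{Iw}}(V'')=D(V'')^{\psi=1}\subseteq D^\dagger(V'')$ consists of bounded elements, the smallest $(\varphi,\Gamma)$-stable $\fB^\dagger_K$-subspace containing $H^1_{g,\text{Iw}}(V'')$ is automatically étale. It is therefore $D^\dagger(V')$ for a subrepresentation $V'\subseteq V''$, and $H^1_{g,\text{Iw}}(V'')$ lands in $H^1_{g,\text{Iw}}(V')$, because $H^1(K_n,\Bdr\otimes V')\to H^1(K_n,\Bdr\otimes V'')$ is injective for de Rham $V''$. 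In the weights-$\geq 0$ case your argument, with the corrected condition, then finishes at once. Indeed, $D^\dagger_{\mathrm{rig}}(V')$ lies in the modification with lattice $(\Bdr^+\otimes V'')\cap t\Bdr^+\otimes_K D_{\mathrm{dR}}(V'')$, so $V'\subseteq\operatorname{Fil}^1V''=0$. Conversely, ``large $\psi=1$-part'' cannot detect étaleness, since every $(\varphi,\Gamma)$-module over the Robba ring has $\psi=1$-part of full rank over the distribution algebra (cf.\ \cite{KPX}).

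What is missing is a proof that $V'$ has all weights $\geq1$ when negative weights are present, and such a proof must use the $\Lambda$-integrality of $H^1_{\text{Iw}}$ essentially. Proposition~\ref{prop:comparisonberger} and the remark after it show why. For irreducible $V$ with mixed weights, $\varprojlim_n H^1_g(K_n,V)$ has positive $\mathcal{D}_\infty$-rank $[K:\QQ_p]\dim V-N_0$ although $\operatorname{Fil}^1V=0$. Hence any argument using the conditions only in a $\mathcal{D}$- or Robba-linear way would prove something false.

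A smaller point concerns your final step. ``Torsion implies finite-dimensional'' is just Weierstrass preparation when $\Gamma$ is one-dimensional, but it fails for higher-dimensional $\Gamma$, which the statement allows. Restricting to the twisted cyclotomic subextension $K'_\infty$ does not fix this, because $H^1_{\text{Iw}}(K_\infty/K,-)\to H^1_{\text{Iw}}(K'_\infty/K,-)$ has a large kernel.
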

In the situation where we consider $J = \Hom(K,E)$ we can also forget the $E$-linear structure and view all objects just as $\QQ_p$-linear representations to put ourselves in the situation of Berger. 
\begin{lem} \label{lem:sesinvertp}Let $K/\QQ_p$ be finite and suppose that $\Gamma$ is one-dimensional, then 
we have a short exact sequence
$$0
\to 	\Lambda / \mathfrak{J} \otimes_{\Lambda}H^1_{\text{Iw}}(V) \to H^1(\Lambda/\mathfrak{J} \otimes_{o_E}V) \to \operatorname{Tor}^{\Lambda[1/p]}_1(\Lambda/\mathfrak{J}[1/p], H^2_{\text{Iw}}(V)) \to 0$$
\end{lem}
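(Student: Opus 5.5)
Starting from Lemma \ref{lem:torsequence} with $\mathfrak{J}$ the kernel of $\Lambda \to o_E[\Gamma/\Gamma_n]$, or more generally any ideal, I will invert $p$. This gives the exact sequence
\begin{align*}
&\operatorname{Tor}^{\Lambda}_1(\Lambda/\mathfrak{J},H^1_{\text{Iw}}(T))[1/p] \to H^0(G_K,\Lambda/\mathfrak{J} \otimes_{o_E}V) \to \operatorname{Tor}^{\Lambda}_2(\Lambda/\mathfrak{J},H^2_{\text{Iw}}(T))[1/p] \\
\to &\Lambda/\mathfrak{J} \otimes_{\Lambda}H^1_{\text{Iw}}(V) \to H^1(\Lambda/\mathfrak{J} \otimes_{o_E}V) \to \operatorname{Tor}^{\Lambda[1/p]}_1(\Lambda/\mathfrak{J}[1/p], H^2_{\text{Iw}}(V)) \to 0.
\end{align*}
Inverting $p$ is harmless: localisation is exact and commutes with $\operatorname{Tor}$, so $\operatorname{Tor}^\Lambda_i(\Lambda/\mathfrak{J},-)[1/p] = \operatorname{Tor}^{\Lambda[1/p]}_i(\Lambda/\mathfrak{J}[1/p],(-)[1/p])$. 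Moreover $H^i(G_K,\Lambda/\mathfrak{J}\otimes T)[1/p] = H^i(G_K,\Lambda/\mathfrak{J}\otimes V)$, because continuous cohomology of a finitely generated $o_E$-module (or of a compact one) commutes with $\otimes_{o_E}E$. The claimed short exact sequence therefore reduces to showing
$$\operatorname{Tor}^{\Lambda[1/p]}_2(\Lambda/\mathfrak{J}[1/p],H^2_{\text{Iw}}(V))=0.$$

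To prove this vanishing I use that $\Gamma$ is one-dimensional. Then $\Gamma \cong \Delta \times \ZZ_p$ with $\Delta$ finite, and $\Lambda[1/p] \cong E[\Delta]\otimes \Lambda(\ZZ_p)[1/p]$. Since $E[\Delta]$ is semisimple by Maschke, $\Lambda[1/p]$ is a finite product of rings of the form $E'\otimes_{o_{E'}} o_{E'}\llbracket X\rrbracket$. Each factor $o_{E'}\llbracket X\rrbracket[1/p]$ is a localisation of a regular local ring of dimension $2$ at an element, so it is noetherian and regular of Krull dimension $1$, i.e.\ a PID. Hence $\Lambda[1/p]$ has global dimension $\le 1$, and all $\operatorname{Tor}_{\geq 2}$ over $\Lambda[1/p]$ vanish. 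This gives the short exact sequence; the injectivity on the left comes from the vanishing of the $\operatorname{Tor}_2$ term.

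The main point to check is the global dimension claim. The same argument also works without decomposing $\Delta$: $\Lambda[1/p]$ is noetherian, and each localisation at a maximal ideal is a localisation of $o_E\llbracket \Gamma\rrbracket[1/p]$, which has dimension $1$ and is regular because the group ring is regular away from $p$ in characteristic $0$. Either route works, and I would use the explicit product decomposition. I also need to make sure the maps in the sequence are the natural ones, which follows directly from the edge maps of the spectral sequence of Lemma \ref{lem:basechangespectralsequence}.
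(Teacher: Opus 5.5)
Your argument is correct and is essentially the paper's proof: invert $p$ in the sequence of Lemma \ref{lem:torsequence}, use that $\operatorname{Tor}$ commutes with the flat base change $\Lambda \to \Lambda[1/p]$, and kill the $\operatorname{Tor}_2$-term because $\Lambda[1/p]$ is a finite product of PIDs (your decomposition via $E[\Delta]$ into factors $o_{E'}\llbracket X\rrbracket[1/p]$ is in fact more precise than the paper's ``copies of $\ZZ_p\llbracket T\rrbracket[1/p]$''). The only slip is that a regular noetherian domain of dimension one is a Dedekind domain, not necessarily a PID; that already gives global dimension $\le 1$, which is all you need, and the PID property holds here because $o_{E'}\llbracket X\rrbracket$ is a UFD.
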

\begin{proof}
We apply the exact functor $\Lambda[1/p] \otimes_{\Lambda} - = \QQ_p \otimes_{\ZZ_p}-$ to the sequence from Lemma \ref{lem:torsequence}. Note that $\operatorname{Tor}$ commutes with flat base change (cf. \cite[\href{https://stacks.math.columbia.edu/tag/00M8}{Tag 00M8}]{stacks-project}). Furthermore, for $\Gamma \cong \ZZ_p \times \Gamma_{tors}$ the ring $\Lambda[1/p]$ is a product of copies of $\ZZ_p\llbracket T\rrbracket[1/p]$ and hence a product of PIDs which forces the $\operatorname{Tor}_2$-terms to vanish. This yields the claim. 
\end{proof}
\begin{lem} \label{lem:injectivefindim} Suppose that $\Gamma$ is one-dimensional, then for any ideal $\mathfrak{J} \subseteq \Lambda$ we have
 $$ \operatorname{dim}_{\QQ_p}\operatorname{Tor}^{\Lambda[1/p]}_1(\Lambda/\mathfrak{J}[1/p], H^2_{\text{Iw}}(V)) \leq \operatorname{dim}_{\QQ_p} H^2_{\text{Iw}}(V) < \infty.$$ 
\end{lem}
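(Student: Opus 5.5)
The bound will come from the Koszul description of $\operatorname{Tor}$ over the one-dimensional ring $\Lambda[1/p]$ combined with the classical fact that $H^2_{\text{Iw}}(V)$ is finite-dimensional. As in the proof of Lemma \ref{lem:sesinvertp}, write $\Gamma \cong \ZZ_p \times \Gamma_{tors}$. After enlarging $E$ if necessary (this does not affect $\QQ_p$-dimensions up to the fixed factor $[E:\QQ_p]$, and the inequality can be checked after a faithfully flat base change), $\Lambda[1/p]$ decomposes via the idempotents of the finite group $\Gamma_{tors}$ as a finite product $\prod_{\chi} o_E\llbracket T\rrbracket[1/p]$. Every $\Lambda[1/p]$-module splits accordingly, and $\operatorname{Tor}$ is compatible with this decomposition. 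It therefore suffices to treat each factor, where the ring $R:=o_E\llbracket T\rrbracket[1/p]$ is a PID.

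First I would recall that $H^2_{\text{Iw}}(T)$ is a finitely generated $\ZZ_p$-module. It is the projective limit of $H^2(K_n,T)$, and by local Tate duality it is dual to $\varinjlim H^0(K_n, V^*(1)/T^*(1))$, whose corank is bounded by $\dim V$. Hence $H^2_{\text{Iw}}(V)$ is finite-dimensional over $\QQ_p$, which gives the second inequality. On each factor $R$, the ideal $\mathfrak{J}R$ is principal, say generated by $f$. If $f=0$, then $\operatorname{Tor}_1$ vanishes because $R$ is flat over itself. If $f$ is a unit, $\operatorname{Tor}_1$ vanishes as well. Otherwise $f$ is a nonzerodivisor and the resolution $0\to R\xrightarrow{f}R\to R/f\to 0$ applies. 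This is exactly the Koszul situation of Proposition \ref{prop:torbound} with $r=1$, and it identifies
$$\operatorname{Tor}_1^{R}(R/f, M) \cong M[f] \subseteq M,$$
where $M$ is the corresponding component of $H^2_{\text{Iw}}(V)$.

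Consequently $\operatorname{Tor}_1$ is, componentwise, a subspace of the corresponding component of $H^2_{\text{Iw}}(V)$. Summing over the components yields
$$\dim_{\QQ_p}\operatorname{Tor}_1^{\Lambda[1/p]}(\Lambda/\mathfrak{J}[1/p],H^2_{\text{Iw}}(V)) \le \dim_{\QQ_p} H^2_{\text{Iw}}(V).$$
The main point requiring care is the first step: the finiteness of $H^2_{\text{Iw}}(V)$, and checking that the ideal decomposes compatibly with the idempotent splitting. The splitting is harmless because the idempotents are central and $\mathfrak{J}[1/p]$ is an ideal in the product ring, so it is a product of ideals. Everything else is a formal consequence of the PID structure.
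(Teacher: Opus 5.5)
Your proposal is correct and follows essentially the same route as the paper: split $\Lambda[1/p]$ into a finite product of PIDs, then on each factor identify $\operatorname{Tor}_1$ with the $f$-torsion submodule of the corresponding component of $H^2_{\text{Iw}}(V)$. Your local Tate duality argument for $\dim_{\QQ_p} H^2_{\text{Iw}}(V) < \infty$ is sound and fills in what the paper takes for granted; enlarging $E$ is unnecessary, and if you did enlarge to $E'$ the dimensions would scale by $[E':E]$, not $[E:\QQ_p]$.
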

\begin{proof}
	The ring $\Lambda[1/p]$ is a product of PIDs. For a module $M$ over a PID $R$ and an ideal $I\subseteq R$ we have that $\operatorname{Tor}^{R}_1(R/I, M)$ can be identified with the $I$-torsion submodule of $M.$ Applying this to each component of $\Lambda[1/p]$ we conclude that the dimension of $\operatorname{Tor}^{\Lambda[1/p]}_1(\Lambda/\mathfrak{J}[1/p], H^2_{\text{Iw}}(V))$ is bounded by the dimension of $H^2_{\text{Iw}}(V).$
\end{proof}
\begin{prop} \label{prop:injectivecokerfinite}Suppose $\Gamma$ is one-dimensional, then the natural map 
	$$ \mathcal{D}_\infty \otimes_\Lambda H^1_{\text{Iw}}(V) \to \varprojlim_n H^1(K_n,V)$$ is injective and the cokernel is finite-dimensional. 
\end{prop}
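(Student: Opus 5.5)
We apply Lemma \ref{lem:sesinvertp} with $\mathfrak{J}=I_n$, the kernel of $\Lambda \to E[\Gamma/\Gamma_n]$, at every level $n$, and then pass to the projective limit. By Shapiro's lemma $H^1(K,\Lambda/I_n \otimes_{o_E} V)\cong H^1(K_n,V)$, so for each $n$ we get a short exact sequence
$$0 \to H^1_{\text{Iw}}(V)/I_nH^1_{\text{Iw}}(V) \to H^1(K_n,V) \to T_n \to 0,$$
where $T_n:=\operatorname{Tor}_1^{\Lambda[1/p]}(\Lambda[1/p]/I_n, H^2_{\text{Iw}}(V))$. These sequences are compatible in $n$: the transition maps are the projections on the left, corestriction in the middle, and the induced maps on $\operatorname{Tor}_1$ on the right. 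Compatibility holds because the sequence comes from the base change isomorphism of Lemma \ref{lem:basechangespectralsequence}, which is natural in the quotient $A=\Lambda/I_n$.

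All three terms are finite-dimensional $E$-vector spaces. The middle term is a Galois cohomology group of a finite-dimensional representation, and Lemma \ref{lem:injectivefindim} gives $\dim T_n \le \dim_{\QQ_p} H^2_{\text{Iw}}(V)<\infty$. Hence the left-hand system satisfies Mittag-Leffler, and taking $\varprojlim_n$ preserves exactness:
$$0 \to \varprojlim_n H^1_{\text{Iw}}(V)/I_n \to \varprojlim_n H^1(K_n,V) \to \varprojlim_n T_n \to 0.$$
By Proposition \ref{prop:DmodNablaLimit}, the left-hand term is $\mathcal{D}/\operatorname{Lie}(\Gamma)\otimes_\Lambda H^1_{\text{Iw}}(V)=\mathcal{D}_\infty\otimes_\Lambda H^1_{\text{Iw}}(V)$, using Lemma \ref{lem:smoothdistr}; this applies because $H^1_{\text{Iw}}(V)$ is finitely generated over $\Lambda[1/p]$. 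One still has to check that this identification is compatible with the natural map in the statement, but that is a diagram chase. This proves injectivity.

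For the cokernel we need $\varprojlim_n T_n$ to be finite-dimensional. Write $H:=H^2_{\text{Iw}}(V)$, which is finite-dimensional over $\QQ_p$. As in the proof of Lemma \ref{lem:injectivefindim}, decompose $\Lambda[1/p]$ into a finite product of PIDs. In each factor, $T_n$ is identified with the $I_n$-torsion submodule $H[I_n]\subseteq H$, and the transition map $T_{n+1}\to T_n$ under this identification is induced by the inclusion $I_{n+1}\subseteq I_n$ of ideals. This map is multiplication by a generator of $I_n/I_{n+1}$, essentially the norm element $\sum_{g \in \Gamma_n/\Gamma_{n+1}} g$, which maps $H[I_{n+1}]$ into $H[I_n]$.

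The projective limit of such a system is abstractly a subspace of $\prod_n H[I_n]$. To bound its dimension we use that every term lies in the single finite-dimensional space $H$ and every transition map is an endomorphism of $H$ restricted to a subspace. Then $\varprojlim_n T_n$ injects into $H$: the images of $\varprojlim_n T_n \to T_m \subseteq H$ stabilize for each $m$, and a compatible family is determined by its component at a large $m$ once the transition maps are injective on the stable images. Hence $\dim \varprojlim_n T_n \le \dim H$.

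We expect the main obstacle to be making this last step rigorous, namely the explicit description of the transition maps on $\operatorname{Tor}_1$ in terms of torsion submodules, together with the finite-dimensionality of the limit. A cleaner alternative, which we would try first, is to apply Lemma \ref{lem:torRlim} (with $d=1$): it gives $\varprojlim_n T_n\cong \operatorname{Tor}_1^{\mathcal{D}}(\mathcal{D}_\infty,\mathcal{D}\otimes_\Lambda H)$. Since $\mathcal{D}\otimes_\Lambda H$ is finite-dimensional and $\operatorname{Tor}_1$ can be computed from the length-one resolution of $\mathcal{D}_\infty$ over $\mathcal{D}$ (in the one-dimensional case $\mathcal{D}_\infty\cong\mathcal{D}/\nabla$), this $\operatorname{Tor}_1$ is the $\nabla$-torsion submodule of the finite-dimensional space $\mathcal{D}\otimes_\Lambda H$, and is therefore finite-dimensional.
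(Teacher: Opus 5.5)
Your proposal is correct and follows the paper's proof: you take the levelwise short exact sequences from Lemma \ref{lem:sesinvertp}, apply Mittag-Leffler to the finite-dimensional left-hand terms, and use Proposition \ref{prop:DmodNablaLimit} to identify $\varprojlim_n H^1_{\text{Iw}}(V)/I_n$ with $\mathcal{D}_\infty\otimes_\Lambda H^1_{\text{Iw}}(V)$. The step you flag as the main obstacle is not one, and the paper settles it with the bound $\dim T_n\le \dim H^2_{\text{Iw}}(V)$ that you already quoted from Lemma \ref{lem:injectivefindim}: a projective limit of finite-dimensional spaces of dimension at most $C$ has dimension at most $C$ (given $C+1$ elements of the limit, the kernels of the induced maps $E^{C+1}\to T_n$ form a decreasing chain that stabilises at the kernel of $E^{C+1}\to\varprojlim_n T_n$), so you need neither the explicit transition maps on $\operatorname{Tor}_1$ nor Lemma \ref{lem:torRlim}, which in any case is only stated for $\Gamma\cong\ZZ_p^d$.
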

\begin{proof}
	We have $$\mathcal{D}_\infty \otimes_\Lambda H^1_{\text{Iw}}(V) \cong \varprojlim_n [\mathcal{D}/\mathcal{I}_n \otimes_\mathcal{D} (\mathcal{D} \otimes_{\Lambda} H^1_{\text{Iw}}(V))] \cong \varprojlim_n H^1_{\text{Iw}}(V)/I_n$$
	by Proposition \ref{prop:DmodNablaLimit} using that we have for each level 
	
	$$(\mathcal{D}\otimes_\Lambda H^1_{\text{Iw}}(V))/\mathcal{J}_n \cong \Lambda[1/p]/I_n \otimes_{\Lambda} H^1_{\text{Iw}}(V).$$
	From Lemma \ref{lem:sesinvertp} we have short exact sequences
	$$0
	\to 	\Lambda / I_n \otimes_{\Lambda}H^1_{\text{Iw}}(V) \to H^1(K_n,V) \to \operatorname{Tor}^{\Lambda[1/p]}_1(\Lambda/I_n[1/p], H^2_{\text{Iw}}(V)) \to 0$$
	The kernels are finite-dimensional and hence a Mittag-Leffler system, while the dimension of the right-hand terms is bounded independently of $n.$ Passing to the limit yields the claim. 
\end{proof}

\begin{prop} \label{prop:comparisonberger}
	Let $K_\infty/K$ be the cyclotomic extension, let $V$ be de Rham and let 
	$N_0$ be the number of Hodge--Tate weights of $V$ which are $<1.$
	\begin{enumerate}[(i)]
		\item The natural map 
		 $\mathcal{D}_\infty \otimes_\Lambda H^1_{\text{Iw},g}(V) \to H^1_{\text{Iw},g,J,\nabla}(V)$ has finite-dimensional kernel. 
		 \item 
		 The cokernel is finite-dimensional if and only if $[K:\QQ_p]\dim V - N_0= [K:\QQ_p]\dim\operatorname{Fil}^1V,$
		 if and only if $[K:\QQ_p]\dim\operatorname{Fil}^1V$ is equal to the number of HT weights $\geq 1.$
	\end{enumerate}
\end{prop}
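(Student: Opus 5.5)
The plan is to compare both sides through the injection of Proposition \ref{prop:injectivecokerfinite}. We work with $J = \Hom(K,E)$, so $H^1_{g,J} = H^1_g$.

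\textbf{Kernel.} The module $H^1_{\text{Iw},g}(V)$ is a $\Lambda$-submodule of $H^1_{\text{Iw}}(V)$, and $\mathcal{D}_\infty$ is not flat over $\Lambda$. The first step is therefore to factor the map as
$$\mathcal{D}_\infty \otimes_\Lambda H^1_{\text{Iw},g}(V) \to \mathcal{D}_\infty \otimes_\Lambda H^1_{\text{Iw}}(V) \hookrightarrow \varprojlim_n H^1(K_n,V).$$
The second arrow is injective by Proposition \ref{prop:injectivecokerfinite}. By Proposition \ref{prop:DmodNablaLimit}, the kernel of the first arrow is the limit of the images of $\operatorname{Tor}_1^{\Lambda}(\Lambda/I_n, Q)$ in $H^1_{\text{Iw},g}(V)/I_n$, where $Q = H^1_{\text{Iw}}(V)/H^1_{\text{Iw},g}(V)$. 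To compute this, I would apply $\varprojlim$ to the levelwise exact sequences
$$\operatorname{Tor}_1(\Lambda/I_n,Q)\to H^1_{\text{Iw},g}/I_n \to H^1_{\text{Iw}}/I_n.$$
All terms are finite dimensional, so Mittag-Leffler holds.

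Now $\Lambda[1/p]$ is a product of PIDs, so $\operatorname{Tor}_1(\Lambda/I_n,Q)$ is the $I_n$-torsion of $Q$. The key point is that $Q$ has bounded finite-dimensional $I_n$-torsion, uniformly in $n$. I would obtain this by embedding $Q$ into $\varprojlim_n H^1(K_n,V)/H^1_g(K_n,V) \subseteq \varprojlim_n H^1(K_n, \Bdr\otimes V)$. Via Bloch--Kato duality, the $\Gamma$-torsion of $Q$ lies in a finitely generated torsion $\Lambda$-module whose characteristic ideal is prime to almost all $I_n$. For instance, Berger's theorem identifies $H^1_{\text{Iw},g}$ with $H^1_{\text{Iw}}(\operatorname{Fil}^1V)$ up to finite dimension, and then $Q$ is, up to finite dimension, a submodule of $H^1_{\text{Iw}}(V/\operatorname{Fil}^1V)$, whose torsion is finite dimensional. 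Hence the kernel is finite dimensional.

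\textbf{Cokernel.} By Berger's theorem, $H^1_{\text{Iw},g}(V)$ agrees with $H^1_{\text{Iw}}(\operatorname{Fil}^1V)$ up to a finite-dimensional space. Since $\mathcal{D}_\infty\otimes_\Lambda -$ of a finite-dimensional module is finite dimensional (by Proposition \ref{prop:DmodNablaLimit}), the source is, up to finite dimension, $\mathcal{D}_\infty\otimes_\Lambda H^1_{\text{Iw}}(\operatorname{Fil}^1V)$. By Proposition \ref{prop:injectivecokerfinite} applied to $\operatorname{Fil}^1V$, this is good of rank equal to that of $\varprojlim_n H^1(K_n,\operatorname{Fil}^1V)$. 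Using the Euler characteristic formula levelwise, together with the boundedness of $H^0$ and $H^2$ from Lemma \ref{lem:trivialH0} and its dual, that rank is $[K:\QQ_p]\dim \operatorname{Fil}^1 V$.

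By Proposition \ref{prop:smoothrank} with $J$ all embeddings, the target has rank $[K:\QQ_p]\dim V - N_0$, using Remark \ref{rem:HTweightsless1} to identify $N$ with $N_0$. A map of good coadmissible $\mathcal{D}_\infty$-modules with finite-dimensional kernel has finite-dimensional cokernel if and only if the ranks agree. This follows by comparing fibres at almost all $\rho$, via Lemma \ref{lem:goodexactsequence} and Proposition \ref{prop:coad1}, once we know the image is coadmissible. Coadmissibility of the image holds because the image is a quotient of a coadmissible module by a finite-dimensional one.

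The last equivalence is arithmetic. $[K:\QQ_p]\dim V - N_0$ is the number of HT weights $\geq 1$ counted over all embeddings, since $V$ has $[K:\QQ_p]\dim V$ weights in total over $\Hom(K,E)$.

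\textbf{Main obstacle.} The hardest step is the uniform control of the $I_n$-torsion of $Q$ in the kernel argument. I would try first to handle it entirely through Berger's comparison, reducing to torsion of $H^1_{\text{Iw}}(V/\operatorname{Fil}^1V)$, which is controlled by $H^0$ terms and therefore finite dimensional.
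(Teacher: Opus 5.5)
Your proof is correct and follows the same skeleton as the paper's. You factor through $\mathcal{D}_\infty\otimes_\Lambda H^1_{\text{Iw}}(V)$ and use Proposition \ref{prop:injectivecokerfinite} for injectivity of the second arrow. You bound the kernel of the first arrow by $\operatorname{Tor}_1$ of $Q=H^1_{\text{Iw}}(V)/H^1_{\text{Iw},g}(V)$, and you settle (ii) by comparing ranks componentwise. The paper phrases the Tor step as $\operatorname{Tor}_1^{\mathcal{D}}(\mathcal{D}_\infty,\mathcal{D}\otimes_\Lambda Q)$, using flatness of $\mathcal{D}$ over $\Lambda[1/p]$. You work levelwise with $\operatorname{Tor}_1^{\Lambda}(\Lambda/I_n,Q)$ and pass to the limit; the two are equivalent (cf.\ Lemma \ref{lem:torRlim}).

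The real difference is the step you call the main obstacle, which is in fact immediate. $Q$ is a quotient of the finitely generated module $H^1_{\text{Iw}}(V)$. After shrinking $\Gamma$, it is a finitely generated module over the PID $\Lambda[1/p]\cong o_E\llbracket T\rrbracket[1/p]$. So it is free plus a torsion part killed by a Weierstra\ss{} polynomial, and $Q_{\mathrm{tors}}$ is finite-dimensional. It therefore bounds every $I_n$-torsion uniformly. The paper uses exactly this, so part (i) needs neither Berger's theorem nor any duality.

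Your detour through $H^1_{\text{Iw}}(V/\operatorname{Fil}^1V)$ is valid. Write $A=H^1_{\text{Iw}}(V)/H^1_{\text{Iw}}(\operatorname{Fil}^1V)$ and let $F$ be Berger's finite-dimensional quotient; then $Q_{\mathrm{tors}}=A_{\mathrm{tors}}/F$. But this rests on the same general fact applied to a different module. The sentence about Bloch--Kato duality and characteristic ideals prime to almost all $I_n$ is not an argument and should be dropped.

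For (ii) you are more explicit than the paper, which simply asserts the rank of the source. Your computation via Berger's theorem, Proposition \ref{prop:injectivecokerfinite} for $\operatorname{Fil}^1V$, and the levelwise Euler characteristic is fine. One small point: when you replace $H^1_{\text{Iw},g}(V)$ by $H^1_{\text{Iw}}(\operatorname{Fil}^1V)$, you also need $\operatorname{Tor}_1(\mathcal{D}_\infty,F)$ to be finite-dimensional, not only $\mathcal{D}_\infty\otimes_\Lambda F$. This follows from the same torsion argument.
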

\begin{proof} By restricting to an open subgroup we can without loss of generality assume that $\Lambda[1/p]$ is a PID (isomorphic to $\ZZ_p\llbracket T\rrbracket[1/p]$). 
Since $\mathcal{D}_\infty \otimes_\Lambda H^1_{\text{Iw}}(V) \to \varprojlim_n H^1(K_n,V)$ is injective by Proposition \ref{prop:injectivecokerfinite} it suffices to show that the kernel of 
$$\mathcal{D}_\infty \otimes_\Lambda H^1_{\text{Iw},g}(V) \to \mathcal{D}_\infty \otimes_\Lambda H^1_{\text{Iw}}(V) $$ is finite-dimensional. Since $\Lambda[1/p]$ is a PID we can see that the $$X:=H^1_{\text{Iw}}(V)/H^1_{\text{Iw},g}(V)$$ is a direct sum of a free $\Lambda[1/p]$-module and a finitely generated torsion module. A finitely generated torsion module over $\ZZ_p\llbracket T\rrbracket[1/p]$ is killed by some Weierstraßpolynomial and hence seen to be finite-dimensional. Using that $\mathcal{D}$ is flat over $\Lambda[1/p]$ we obtain a short exact sequence
$$0 \to \mathcal{D}\otimes_\Lambda H^1_{\text{Iw},g}(V) \to \mathcal{D}\otimes_\Lambda H^1_{\text{Iw}}(V) \to \mathcal{D} \otimes_\Lambda X \to 0 $$ and applying 
$\mathcal{D}_\infty \otimes_{\mathcal{D}}-$ to the above sequence we conclude that
$$\operatorname{ker}(\mathcal{D}_\infty \otimes_\Lambda H^1_{\text{Iw},g}(V) \to \mathcal{D}_\infty \otimes_\Lambda H^1_{\text{Iw}}(V) )$$
is a quotient of $\operatorname{Tor}_1^{\mathcal{D}}(\mathcal{D}_\infty,\mathcal{D} \otimes_\Lambda X),$ which is finite-dimensional as required. 
For the second part note that the $\mathcal{D}_\infty$-rank of the left-hand side is equal to $[K:\QQ_p]\dim \operatorname{Fil}^1V$ while the dimension of the right-hand side is $[K:\QQ_p]\dim V-N_0$ by \ref{rem:HTweightsless1}. 
The second if and only if statement is clear. 
\end{proof}
The above proposition shows that our $H^1_{\text{Iw}g,\nabla}(V)$ is a good approximation of $\mathcal{D}_\infty \otimes_\Lambda H^1_{\text{Iw},g}(V)$ when the weights $\geq1$ can all be packaged into a subrepresentation, which is maximal with this property.
We remark that the assumptions of Proposition \ref{prop:comparisonberger} are trivially satisfied if $V$ is de Rham with all Hodge--Tate weights $\geq 1$ or all weights $\leq 0$ but we warn the reader that there exist irreducible representations having arbitrary weights from this it is easy to construct examples where the map from \ref{prop:comparisonberger} has an infinite-dimensional cokernel (cf. \cite{berger2004construction}).

\section{Coadmissibility of $M^{\Psi=1}$} \label{section:analytic}
Throughout this section, $K/\QQ_p$ is a finite Galois extension and $K_\infty/K$
is the Lubin-Tate extension attached to a fixed uniformiser $\pi_K$.
We put $\Gamma_K:=\operatorname{Gal}(K_\infty/K)$ and choose $n\gg0$
such that
\[
 \Gamma_n:=\operatorname{Gal}(K_\infty/K_n)\cong o_K.
\]

\subsection{Preliminaries on analytic $(\varphi,\Gamma)$-modules and their cohomology}
\begin{defn} We say a $B$-pair $W$ is $K$-\textbf{analytic} if it is de Rham of weight $0$ at all $\sigma \neq \id_K.$ 
	We denote $$H^i_{an}(K_n,W):= H^i_{g,\Gal(K/\QQ_p)\setminus \{ \id\}}(K_n,W).$$
\end{defn}
For a complete field extension $F$ of $K$ we define the Robba ring with coefficients in $F$ as  $\cR_F:= \varinjlim_{r \in (0,1)} \cR_F^{[r,1)},$ where $\cR_F^{[r,1)}$ is the ring of Laurent series $\sum_{k\in \ZZ}a_kT^k$ converging on the annulus $r\leq \abs{T} <1$ with coefficients in $F.$
From the Lubin--Tate extension one obtains a one dimensional formal group and we denote by $[a](T)$ the endomorphism corresponding to multiplication by $a \in o_K,$ we also denote $\varphi_K(T):=[\pi_K](T).$ The action on $o_K\llbracket T \rrbracket$ induced by $T \mapsto [a](T)$ can be extended $F$-linearly to the Robba ring $\cR_F.$
\begin{defn}
A $(\varphi_K,\Gamma_K)$-module $M$ over the Robba ring $\cR_F$ is a finitely generated projective $\cR_F$-module  which arises as the base change a finitely generated projective $\cR_K^{[r,1)}$-module $M^{[r,1)}$ (which we call \textbf{model} of $M$) for some $r \gg 0$ \footnote{Note that $r$ needs to be larger than the absolute value of the zeroes of $\varphi_K,$ in order for the map $\varphi_K\colon \cR_K^{[r,1)}\to \cR_K^{[r^{1/q},1)}$ to be well-defined.} with a continuous semi-linear $\Gamma_K$-action and a $\varphi$-semilinear map $$M^{[r,1)} \to M^{[r^{1/q},1)} = \cR_K^{[r^{1/q},1)} \otimes_{\cR_K^{[r,1)}} M^{[r,1)}$$ which commutes with the action of $\Gamma_K$ and such that the linearised map
$$M^{[r^{1/q},1)} \otimes_{\cR_K^{[r,1)},\varphi_K}M^{[r,1)} \to M^{[r^{1/q},1)}$$ is an isomorphism. 
We say $M$ is $K$-analytic if the $\Gamma_K$-action is $K$-analytic with respect to the $K$-analytic group structure obtained from the chart $$\Gamma_K \xrightarrow{\chi_{\mathrm{LT}}} o_K^\times,$$ i.e., if the induced derived action 
$\operatorname{Lie}(\Gamma_K) \times M \to M$  is $K$-bilinear. 
\end{defn}
If $M$ is a $K$-analytic $(\varphi_K,\Gamma_K)$-module over $\cR_F,$ then the action of $\Gamma_K$ extends uniquely by $K$-linearity to an action of the $K$-analytic distribution algebra $\mathcal{D}^{K}(\Gamma_K,F),$ which is a quotient of the algebra $\mathcal{D}^{\QQ_p}(\Gamma_K,F).$
We denote by $\Psi$-the canonical left-inverse operator of $\varphi_M.$ It coincides with the $\Psi$-operator used in \cite{BFFanalytic} and differs by a factor of $\frac{q}{\pi}$ from the operator $\psi_L$ from \cite{SchneiderVenjakobRegulatorBook}.
\begin{defn}
We define the analytic cohomology $H^i_{an}(\Psi \times \Gamma_n,M)$ of $M$ as the cohomology of the total complex $C_{an}(\Gamma_n,M)$ of the double complex
$$\mathbf{R}\Hom_{\mathcal{D}^{K}(\Gamma_n,K)}(K,M) \xrightarrow{\Psi-1} \mathbf{R}\Hom_{\mathcal{D}^{K}(\Gamma_n,K)}(K,M).$$
\end{defn}
In the following, 
all completed tensor products are Hausdorff completed projective tensor products. Recall that the projective tensor product (\cite[Remark 10.3.3]{PGS}) represents jointly continuous bilinear maps. It is easy to see that the Hausdorff completion of the projective tensor product represents jointly continuous bilinear maps valued in complete Hausdorff spaces. If $A$ is a topological $F$-algebra, we denote by $-\widehat{\otimes}_A-$ the Hausdorff completion of the algebraic tensor product $-\otimes_A-$ with respect to the quotient topology from $-\otimes_F-.$ If $X,Y$ are two topological $A$-modules, then a (jointly) continuous $A$-bilinear map valued in a complete Hausdorff $A$-module factors uniquely over $X \widehat{\otimes}_AY.$
\begin{rem}
If $M$ is defined over $\cR_F$ for some complete field extension it is easy to check that 
$$\mathbf{R}\Hom_{\mathcal{D}^{K}(\Gamma_K,K)}(K,M) \cong \mathbf{R}\Hom_{\mathcal{D}^{K}(\Gamma_K,F)}(F,M).$$ Indeed by \cite[Theorem 6.5]{Kohlhaase} one can find a resolution of $K$ by finite free $\mathcal{D}^K(\Gamma_K,K)$-modules and the completed base change of said resolution is a projective resolution of $F$ as a $F\hat{\otimes}_K\mathcal{D}(\Gamma_K,K)=\mathcal{D}(\Gamma_K,F)$-module.
\end{rem}
\begin{prop}
The following hold:
\begin{enumerate}
\item The category of $K$-analytic $K$-$B$-pairs is equivalent to the category of $K$-analytic $(\varphi_K,\Gamma_K)$-modules. 
\item Under the above equivalence we have isomorphisms 
 $$H^i_{an}(K_n,W) \cong H^i_{an}(\Psi \times \Gamma_n,M(W)),$$ where $M(W)$ denotes the $(\varphi_K,\Gamma_K)$-module attached to a $K$-analytic $B$-pair $W.$
 \item $H^i_{an}(\Psi\times \Gamma_n,M)$ is isomorphic to the $\Gamma_n$-invariants of the cohomology of the complex 
 $$0 \to M \xrightarrow{\Psi-1,\nabla} M^2 \xrightarrow{\nabla \circ p_1 - (\Psi-1)\circ p_2}M \to 0,$$ where $\nabla \in \operatorname{Lie}(\Gamma_n) \cong K$ is a generator and $p_i \colon M^2 \to M $ are the projections. 
\end{enumerate}
\end{prop}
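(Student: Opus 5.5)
The proposition has three parts of different natures, and I would treat them in order, relying on the literature for the foundational equivalence.

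For (1), I would use the equivalence between $K$-$B$-pairs and $(\varphi,\Gamma)$-modules over $K\otimes_{\QQ_p}\fB^\dagger_{\mathrm{rig},K}$ (Nakamura, \cite[Theorem 1.36]{nakamura2009classification}), combined with the Lubin--Tate theory of Berger \cite{Berger2016} and Poyéton \cite{Poyeton2023-sw}. That theory identifies $B$-pairs which are Hodge--Tate of weight $0$ at all $\sigma\neq\id$ with $K$-analytic $(\varphi_K,\Gamma_K)$-modules over $\cR_K$. The comparison runs through the descent from $\tilde{\fB}^\dagger_{\mathrm{rig}}$ to the Lubin--Tate Robba ring via locally analytic vectors. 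The only extra input is \cite[Proposition 2.3.5]{steingart2025partial}. It says our notion of $K$-analytic (de Rham of weight $0$ at $\sigma\neq\id$) agrees with Berger's weaker one (Hodge--Tate of weight $0$ at $\sigma \neq \id$). With this, the two full subcategories match and (1) follows.

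For (3), I would take the explicit Koszul-type resolution of the trivial module $K$ over $\mathcal{D}^K(\Gamma_n,K)$. Since $\Gamma_n\cong o_K$ is one-dimensional as a $K$-analytic group, Kohlhaase's resolution \cite[Theorem 6.5]{Kohlhaase} can be made explicit as $0\to\mathcal{D}^K(\Gamma_n,K)\xrightarrow{\cdot\nabla}\mathcal{D}^K(\Gamma_n,K)\to K\to0$ after first passing to $\operatorname{Lie}$-cohomology. More precisely, I would write $\mathbf{R}\Hom_{\mathcal{D}^K(\Gamma_n)}(K,-)$ as the composite of $\operatorname{Lie}$-algebra cohomology, given by the complex $M\xrightarrow{\nabla}M$, with smooth invariants under $\Gamma_n$. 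Smooth invariants are exact on the relevant modules: the group acts through a finite quotient after the derived action is killed, and we are in characteristic $0$. Hence the Hochschild--Serre type spectral sequence degenerates to the $\Gamma_n$-invariants of $\operatorname{Lie}$-cohomology. Taking the total complex with $\Psi-1$, which commutes with $\nabla$, gives exactly the displayed three-term complex, and taking $\Gamma_n$-invariants of its cohomology gives (3).

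For (2), I would compare the Galois cohomology of $W$ with that of $M(W)$. The first step is the known result \cite{steingart2025partial} (or Fourquaux--Xie type comparisons) that $H^i(G_{K_n},W)$ is computed by the $\varphi$/$\Psi$ and full $\QQ_p$-analytic $\Gamma_n$-cohomology of the $(\varphi,\Gamma)$-module. In that setting the $\operatorname{Lie}$-algebra over $\QQ_p$ has $[K:\QQ_p]$ directions, and the directions $\nabla_\sigma$ for $\sigma\neq\id$ correspond to the de Rham conditions at $\sigma$. The main step, and the expected obstacle, is to identify the subcomplex of classes killed by the maps to $H^1(W_{\mathrm{dR},\sigma})$ for $\sigma\neq\id$ with the classes represented by $K$-analytic cocycles. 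I would argue via Lemma \ref{lem:basicsequence}: the partial Selmer group is the image of $H^1(W(\delta))$. On the $(\varphi,\Gamma)$-module side, twisting by $\delta$ corresponds to passing to $t_\sigma$-multiples, which kills exactly the non-analytic directions. Comparing dimensions using Lemma \ref{lem:dimensionformula} and the known Euler characteristic $-\operatorname{rank}$ of analytic cohomology then upgrades the natural injection to an isomorphism. I would cite \cite{steingart2025partial} for this identification rather than reprove it.
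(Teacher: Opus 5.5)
For (1) and (2) your proposal ultimately rests on the same citations as the paper. The paper's proof consists only of \cite[Theorem A]{Poyeton2023-sw}, with the two notions of $K$-analyticity matched via \cite[Proposition 2.3.5]{steingart2025partial} as recalled in the introduction, and of \cite[Theorem 1]{steingart2025partial}.

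For (3) the paper invokes \cite[Theorem 4.10]{Kohlhaase}: analytic cohomology equals the $\Gamma_n$-invariants of Lie algebra cohomology, with the hypotheses supplied by Theorem 6.5 there. You instead reprove this in the present one-dimensional abelian case. That route works and is more self-contained, but two statements need correcting.

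First, with $\mathcal{D}=\mathcal{D}^K(\Gamma_n,K)$, the sequence $0\to\mathcal{D}\xrightarrow{\cdot\nabla}\mathcal{D}\to K\to 0$ is not a resolution of $K$. Every finite-order character kills $\nabla$, so the cokernel of $\cdot\nabla$ is $\mathcal{D}_\infty$, as the paper notes later. What your argument actually uses is
\[
\mathbf{R}\Hom_{\mathcal{D}}(K,M)\cong\mathbf{R}\Hom_{\mathcal{D}_\infty}\bigl(K,\mathbf{R}\Hom_{\mathcal{D}}(\mathcal{D}_\infty,M)\bigr),\qquad \mathbf{R}\Hom_{\mathcal{D}}(\mathcal{D}_\infty,M)\simeq[M\xrightarrow{\nabla}M].
\]
The second isomorphism holds because $\nabla$ is a non-zero-divisor in the domain $\mathcal{D}$ and generates $\ker(\mathcal{D}\to\mathcal{D}_\infty)$.

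Second, invariants are not exact because $\Gamma_n$ acts through a finite quotient; it does not, e.g.\ on $\mathcal{D}_\infty$ itself or on $M^{\Psi=1}/\nabla$. The correct reason is that $K=e_1\mathcal{D}_\infty$ is a direct factor of $\mathcal{D}_\infty\cong\prod_\rho K_\rho$ (Lemma \ref{lem:projectivelimitofgrouprings}), hence projective. Moreover $\Hom_{\mathcal{D}_\infty}(K,X)=e_1X=X^{\Gamma_n}$, since $1-e_1$ lies in the ideal generated by the $\gamma-1$ for $\gamma\in\Gamma_n$. Because $\Psi$ commutes with $\Gamma_n$, this exact functor commutes with the cone of $\Psi-1$, which gives (3). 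Kohlhaase's theorem buys generality (higher dimension, non-abelian groups); your version buys an elementary proof using only the structure of $\mathcal{D}_\infty$.

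The heuristic you sketch for (2) should not be presented as an argument; only the citation carries weight.

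Its first step is wrong for $M(W)$ over $\cR_K$. On a $K$-analytic module the $\QQ_p$-Lie algebra of $\Gamma_n$ acts only through $K\nabla$. The other $[K:\QQ_p]-1$ directions act by zero and merely contribute exterior-algebra factors, so the full $\QQ_p$-analytic cohomology of $M(W)$ is not $H^\bullet(G_{K_n},W)$. The operators $\nabla_\sigma$ for $\sigma\neq\id$ are only non-trivial on rings larger than $\cR_K$.

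More seriously, the dimension count is circular. The formula $\chi_{an}(\Gamma_n,M)=-[K_n:K]\operatorname{rank}(M)$ is deduced from this very comparison with partial Selmer groups, both in \cite{steingart2025partial} and in Theorem \ref{thm:EPCformel} here. The paper stresses that no intrinsic proof of it is known. Besides, an equality of Euler characteristics could not upgrade an injection in degree $1$ to isomorphisms in all degrees.
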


\begin{proof}
For the first point see \cite[Theorem A]{Poyeton2023-sw}.
The second point is part of \cite[Theorem 1]{steingart2025partial}.
The third point is a consequence of \cite[Theorem 4.10]{Kohlhaase}. Note that the abelian group $\Gamma_K$ satisfies the required hypotheses by Theorem 6.5 \textit{loc.cit.}. 
\end{proof}
In \cite{STEINGART202424} the analytic cohomology of $K$-analytic $(\varphi_K,\Gamma_K)$-modules over affinoid families is studied. For an affinoid algebra $A$ over $F$ one can analogously define $(\varphi_K,\Gamma_K)$-modules over the relative Robba ring $A \hat{\otimes}_F \cR_F$ and their analytic cohomology.

\subsection{Consistent complexes and Deformations}

\begin{defn}
Let $(A_n)_{n \in \NN}$ be a countable projective system of noetherian rings and $A:= \varprojlim_n A_n.$
Let $\mathcal{C}$ be the abelian category whose objects are projective systems of abelian groups $M_n,$ such that each $M_n$ is an $A_n$-module and the transition map $M_{n+1} \to M_n$ is $A_{n+1}$-linear. 
Given an object $C$ in the derived category $D(\mathcal{C})$ we can obtain a family of complexes $C_n \in D(A_n)$ in an obvious way. 
We say $C$ is \textbf{consistent} if 
\begin{enumerate}
\item $C$ is bounded below.
\item $A_n \otimes_{A_{n+1}}^\mathbb{L} C_{n+1} \to C_n$ is an isomorphism in $D(A_n)$ for every $n.$
\item The cohomology groups of $C_n$ are finitely generated over $A_n$ for every $n.$
\end{enumerate}
\end{defn}
\begin{lem} \label{lem:consistent} Let $A = \varprojlim_n A_n$ be a Fréchet-Stein algebra, where each $A_n$ is Banach and $A_{n+1} \to A_n$ is flat with dense image. Then the functors
	$$C \mapsto (A_n \otimes_A C)_n$$
	and 
	$$C_n \mapsto \mathbf{R}\varprojlim C_n$$ define an equivalence between the full subcategory $D^+_c(A)$ of $D^+(A)$ consisting of bounded below complexes with coadmissible cohomology and the full subcategory of consistent complexes in $D^+(\mathcal{C}).$

\end{lem}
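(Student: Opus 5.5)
The plan is to reduce to the equivalence for modules (Schneider--Teitelbaum) together with Mittag-Leffler vanishing. First I check that both functors land where they should; then I show that the unit and counit are isomorphisms.

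\textbf{Step 1 (from $D^+_c(A)$ to consistent complexes).} Let $C\in D^+_c(A)$. I represent $C$ by a bounded below complex and note that $A_n\otimes_A-$ is exact on coadmissible modules, since $A_n$ is flat over $A$ on coadmissible modules by \cite{schneiderteitlbaumlocallyanalytic}. Hence $H^i(A_n\otimes^{\mathbb{L}}_A C)=A_n\otimes_A H^i(C)$. This can be seen with the hypercohomology spectral sequence, whose higher Tor terms vanish on coadmissible modules. Each $A_n\otimes_A H^i(C)$ is finitely generated over $A_n$, which gives condition (3). Condition (2) follows from the identity $A_n\otimes_{A_{n+1}}A_{n+1}\otimes_A=A_n\otimes_A$ together with flatness of $A_{n+1}\to A_n$.

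\textbf{Step 2 (from consistent complexes to $D^+_c(A)$).} Let $(C_n)$ be consistent. By flatness of $A_{n+1}\to A_n$, condition (2) gives $H^i(C_n)\cong A_n\otimes_{A_{n+1}}H^i(C_{n+1})$. So for each $i$ the system $(H^i(C_n))_n$ is a coherent sheaf in the Schneider--Teitelbaum sense, and its limit $X^i=\varprojlim_n H^i(C_n)$ is coadmissible. Theorem B of \cite{schneider2003algebras} gives $\varprojlim^1_n H^i(C_n)=0$; the density of the images is used here. From the Milnor sequence
\[0\to \varprojlim\nolimits^1_n H^{i-1}(C_n)\to H^i(\mathbf{R}\varprojlim C_n)\to \varprojlim_n H^i(C_n)\to 0\]
I then get $H^i(\mathbf{R}\varprojlim C_n)=X^i$. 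This is coadmissible, and the complex is bounded below.

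\textbf{Step 3 (unit and counit).} Both composites come with natural maps. Their effect on cohomology is computed by Steps 1 and 2. The first composite gives
\[H^i(C)\to\varprojlim_n A_n\otimes_A H^i(C),\]
which is an isomorphism for coadmissible modules. The second gives
\[A_m\otimes_A\varprojlim_n H^i(C_n)\to H^i(C_m),\]
which is again an isomorphism by Theorem A/B of \cite{schneider2003algebras}. Since both maps are quasi-isomorphisms, both composites are isomorphic to the identity. Full faithfulness on the consistent side is automatic, because we restrict to full subcategories.

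\textbf{Main obstacle.} The delicate point is making the identification $H^i(A_n\otimes^{\mathbb{L}}_A C)=A_n\otimes_A H^i(C)$ rigorous for unbounded-above complexes. $A_n$ is only flat relative to coadmissible modules, not globally, so I have to check that $\operatorname{Tor}^A_j(A_n,X)=0$ for $j>0$ and $X$ coadmissible. I would try to deduce this from Theorem B applied to a resolution. Failing that, I would use the fact that $A_n\otimes_A$ commutes with the projective limit of a free resolution of the coherent sheaf. If that also fails, I would fall back on \cite[Proposition 3.15]{steingart2024iwasawa}, where this is treated.
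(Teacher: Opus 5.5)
Your outline is correct, and it does more than the paper, which gives no argument and simply cites \cite[Proposition 3.15]{steingart2024iwasawa}. That citation is your third fallback, and it would be circular as a proof. The rest is a self-contained proof: Theorems A and B for the cohomology sheaves $(H^i(C_n))_n$, the Milnor sequence giving $H^i(\mathbf{R}\varprojlim C_n)=\varprojlim_n H^i(C_n)$, and checking unit and counit on cohomology. The only open point is the one you flag as the main obstacle.

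That point is not an obstacle, and your diagnosis of it is wrong: $A_n$ is flat over $A$ outright, not only relative to coadmissible modules. To see this, let $I\subseteq A$ be a finitely generated ideal. Then $I$ is the image of a map $A^d\to A$ of coadmissible modules, hence coadmissible, so $0\to I\to A$ is an exact sequence of coadmissible modules. The functor $A_n\otimes_A-$ is exact on coadmissible modules: under the equivalence with coherent sheaves it corresponds to $X\mapsto X_n$, by Theorems A and B of \cite{schneider2003algebras}. Hence $A_n\otimes_A I\to A_n$ is injective, and the ideal criterion gives flatness.

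This simplifies your argument in two places:
\begin{enumerate}
\item In Step 1, $\operatorname{Tor}^A_j(A_n,-)=0$ for all $j>0$ on all $A$-modules, so $A_n\otimes^{\mathbb{L}}_A C$ is the termwise tensor product. The identity $H^i(A_n\otimes_A C)=A_n\otimes_A H^i(C)$ then holds for every complex, bounded above or not, with no spectral sequence or truncation argument.
\item In Step 3, the functor $C\mapsto (A_n\otimes_A C)_n$ is exact and left adjoint to $\varprojlim$ from $\mathcal{C}$ to $A$-modules. It is therefore left adjoint to $\mathbf{R}\varprojlim$ on $D^+$, and this adjunction supplies the unit and counit whose effect on cohomology you compute.
\end{enumerate}
The equivalence then follows because unit and counit are isomorphisms on the two full subcategories. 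It does not follow from the subcategories being full, which is what your last sentence of Step 3 suggests.
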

\begin{proof}
This is \cite[Proposition 3.15]{steingart2024iwasawa}.
\end{proof}
\begin{prop}\label{prop:DFMconsistent}
Let $M$ be a $K$-analytic $(\varphi_K,\Gamma_K)$-module over $\cR_{\CC_p}.$ Write $D:=\mathcal{D}^K(\Gamma_n,\CC_p) \cong \varprojlim D_\nu$ and equip 
$$\mathbf{Dfm}_\nu(M):=M \hat{\otimes}_K D_\nu$$ with $\Gamma$-action induced by
$\gamma(m \otimes \lambda):= \gamma m \otimes [\gamma^{-1}]\lambda.$
Then the projective system $(C_{an}(\mathbf{Dfm}_\nu(M)))_\nu$ defines a consistent complex of $D$-modules. 
\end{prop}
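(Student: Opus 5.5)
The plan is to check the three conditions in the definition of a consistent complex for the system $C_\nu := C_{an}(\Psi\times\Gamma_n,\mathbf{Dfm}_\nu(M))$, viewed as an object of $D^+(\mathcal{C})$ with $A_\nu = D_\nu$. Boundedness below is immediate: the analytic complex is the total complex of the double complex $\mathbf{R}\Hom_{\mathcal{D}^K(\Gamma_n,\CC_p)}(\CC_p,-)\xrightarrow{\Psi-1}\mathbf{R}\Hom(\CC_p,-)$. By Kohlhaase's finite free resolution (\cite[Theorem 6.5]{Kohlhaase}), and since $\Gamma_n\cong o_K$ is one-dimensional as a $K$-analytic group, this is represented by the explicit three-term complex
$$0\to \mathbf{Dfm}_\nu \xrightarrow{(\Psi-1,\nabla)} \mathbf{Dfm}_\nu^2 \to \mathbf{Dfm}_\nu \to 0,$$
followed by $\Gamma_n$-invariants (a finite group quotient once we pass to $\Gamma_n$ itself, so the invariants are exact in characteristic $0$). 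It sits in degrees $[0,2]$, and it is functorial in $\nu$ via the transition maps $D_{\nu+1}\to D_\nu$. So every $C_\nu$ is a bounded complex of $D_\nu$-modules, with $D_\nu$ acting through the second tensor factor.

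For condition (3), I view $\mathbf{Dfm}_\nu(M)$ as a $K$-analytic $(\varphi_K,\Gamma_K)$-module over the relative Robba ring $D_\nu\hat\otimes\cR_{\CC_p}$. Here $D_\nu$ is an affinoid algebra over $\CC_p$: a closed disc in the Fourier-theoretic description of $\mathcal{D}^K(\Gamma_n,\CC_p)$ as functions on the open unit disc. I then invoke the finiteness theorem of \cite{STEINGART202424}, which says that analytic cohomology of such families is computed by a perfect complex of $D_\nu$-modules. In particular the cohomology groups are finitely generated. Some care is needed: the twisted action $\gamma(m\otimes\lambda)=\gamma m\otimes[\gamma^{-1}]\lambda$ must again be $K$-analytic and continuous, and this holds because both $M$ and $D_\nu$ carry $K$-analytic actions.

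For condition (2), perfectness makes the base change statement accessible. Each term of the three-term complex is a completed tensor product $M\hat\otimes_K D_{\nu+1}$. Since $D_{\nu+1}\to D_\nu$ is flat, I expect
$$D_\nu\otimes_{D_{\nu+1}}(M\hat\otimes_K D_{\nu+1})\cong M\hat\otimes_K D_\nu.$$
The reason is that $M$ is a filtered colimit of Fréchet models $M^{[r,1)}$, and $D_\nu$ is finitely presented over $D_{\nu+1}$ as a Banach module, given a quotient description of the disc restriction. Granting this termwise identification, and since the differentials $\Psi-1$ and $\nabla$ are $D$-linear for the twisted action, the derived tensor product $D_\nu\otimes^{\mathbb L}_{D_{\nu+1}}C_{\nu+1}$ is computed termwise by flatness and agrees with $C_\nu$.

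The main obstacle is exactly this termwise base change. Algebraic and completed tensor products need not agree, and $\Gamma_n$-invariants need not commute with base change. For the first issue I would instead use the base change property for perfect complexes in \cite{STEINGART202424}: analytic cohomology of families commutes with base change along maps of affinoids, $A\to B$, in the derived sense. Applying it to the restriction of the closed disc of radius $r_{\nu+1}$ to the disc of radius $r_\nu$ gives (2) directly. The second issue disappears once we work with $\Gamma_n\cong o_K$, whose Lie algebra already captures the full analytic cohomology.
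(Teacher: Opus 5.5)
Your overall strategy is in line with the paper: verify the three conditions by quoting finiteness and derived base change for analytic cohomology of the families $\mathbf{Dfm}_\nu(M)$ over the affinoids $D_\nu$. The paper obtains (2) and (3) by citing Remark 3.16 of \cite{steingart2024iwasawa}.

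The gap is in the one step the paper actually carries out. That step is producing a strictly functorial complex representing $C_{an}(\mathbf{Dfm}_\nu(M))$ to which those results apply. Your representative, the $\nabla$-complex followed by $\Gamma_n$-invariants, is not such a complex.

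\begin{enumerate}
\item $\Gamma_n\cong o_K$ is infinite, not a finite quotient. Moreover, taking invariants is an operation on cohomology groups, not on the complex.
\item The claim that ``the Lie algebra already captures the full analytic cohomology'' is false. The $\nabla$-complex computes $\mathbf{R}\Hom_D(D/\nabla D,-)$. Here $D/\nabla D\cong\mathcal{D}_\infty$ is the product of the residue fields at \emph{all} finite-order characters $\rho$ of $\Gamma_n$, which are the zeros of $\nabla$ on the character variety. Its $\rho$-component is the analytic cohomology of the $\rho^{-1}$-twist, and only the trivial component is $C_{an}$.
\item Invariants are indeed exact, but the reason is that $\CC_p$ is a direct factor, hence projective, over $D/\nabla D$. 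It has nothing to do with finiteness of $\Gamma_n$.
\end{enumerate}

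So you cannot quote perfectness or base change for the $\nabla$-complex. Your ``second issue'' about invariants is therefore not actually disposed of by your argument.

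The missing idea is Fourier theory, and it is the whole content of the paper's proof. The algebra $D=\mathcal{D}^K(\Gamma_n,\CC_p)$ is the ring of functions on the open unit disc in a coordinate $Z$ vanishing at the trivial character, and $D$ is a domain. Hence $0\to D\xrightarrow{Z}D\to\CC_p\to 0$ is a projective resolution. It follows that $C_{an}(\mathbf{Dfm}_\nu(M))$ is quasi-isomorphic to $C_{\Psi,Z}(\mathbf{Dfm}_\nu(M))$. This is the three-term complex $\mathbf{Dfm}_\nu\to\mathbf{Dfm}_\nu^{2}\to\mathbf{Dfm}_\nu$ with differentials built from $\Psi-1$ and $Z$, where $Z$ acts through the twisted $\Gamma_n$-action.

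This complex involves no invariants, lives in degrees $[0,2]$, and is visibly compatible with the transition maps $D_{\nu+1}\to D_\nu$. It therefore defines an honest object of $D^+(\mathcal{C})$. Conditions (2) and (3) for exactly this complex are what the paper takes from the cited remark. Once the complex is fixed, your finiteness-plus-derived-base-change argument over the affinoids is the natural justification, and your proof and the paper's coincide.

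Your aside that $D_\nu$ is finitely presented as a $D_{\nu+1}$-module is also wrong. Restriction from a larger to a smaller closed disc is flat with dense image but not finite. You were right to abandon the termwise route.
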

\begin{proof}
Recall that the distribution algebra $D^K(\Gamma_n,\CC_p)$ is isomorphic to the convergent power series on the open unit disc in some coordinate $Z,$ hence $[D \xrightarrow{Z}D]$ is a projective resolution of the $\CC_p$ as a $D$-module. The result is now just a translation of \cite[Remark 3.16]{steingart2024iwasawa} using that the complex $C_{\Psi,Z}(\mathbf{Dfm}_n(M))$ used in \textit{loc.cit.} is quasi-isomorphic to our $C_{an}(\mathbf{Dfm}_n(M)).$
\end{proof}

\subsection{Coadmissibility and Descent}

\begin{lem}\label{lem:fixed-vectors}
Let 
\[
V_0\xrightarrow{\iota_0}V_1 \xrightarrow{\iota_1} \dots V_j \xrightarrow{\iota_j} V_{j+1}\dots
\]
be a direct system of abelian groups. Suppose that each $V_j$ comes with an endomorphism $T_j\colon V_j \to V_j$
and suppose that there exist maps $\rho_{j+1}\colon V_{j+1} \to V_j$ such that 
\[
\rho_{j+1} \circ \iota_j = T_j, \qquad \iota_j\circ \rho_{j+1} =T_{j+1},
\]
which implies
\[ 
\iota_j \circ T_{j} = T_{j+1} \circ \iota_j. 
\]
Let $V=\varinjlim_j V_j$ and denote by $T\colon V\to V$ the induced endomorphism. Then for all $j\ge 0$, the canonical map 
\[
V_j^{T_j=1} \to V^{T=1}
\]
is an isomorphism.
\end{lem}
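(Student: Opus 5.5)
The argument is a direct diagram chase using the two relations $\rho_{j+1}\circ\iota_j = T_j$ and $\iota_j\circ\rho_{j+1} = T_{j+1}$. The point is that on fixed vectors the maps $\iota_j$ and $\rho_{j+1}$ become mutually inverse. The compatibility $\iota_j\circ T_j = T_{j+1}\circ\iota_j$ follows from $\iota_j T_j = \iota_j\rho_{j+1}\iota_j = T_{j+1}\iota_j$. Hence $\iota_j$ maps $V_j^{T_j=1}$ into $V_{j+1}^{T_{j+1}=1}$, and $T$ is well defined on $V$.

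The first step is to show that each $\iota_j\colon V_j^{T_j=1}\to V_{j+1}^{T_{j+1}=1}$ is a bijection.

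\emph{Injectivity.} If $v\in V_j^{T_j=1}$ and $\iota_j v=0$, then $v=T_jv=\rho_{j+1}\iota_j v=0$.

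\emph{Surjectivity.} Given $w\in V_{j+1}^{T_{j+1}=1}$, put $v:=\rho_{j+1}w$. Then $\iota_j v = T_{j+1}w = w$. Moreover $T_j v=\rho_{j+1}\iota_j\rho_{j+1}w=\rho_{j+1}T_{j+1}w=\rho_{j+1}w=v$, so $v$ is $T_j$-fixed.

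By induction, all transition maps $V_j^{T_j=1}\to V_k^{T_k=1}$ with $k\ge j$ are isomorphisms.

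The second step identifies the colimit with $V^{T=1}$. Since filtered colimits of abelian groups are exact, and $V^{T=1}=\ker(T-1)$ with $T-1=\varinjlim (T_j-1)$, we get $V^{T=1}=\varinjlim_j V_j^{T_j=1}$. One can also check this by hand.

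\emph{Injectivity of $V_j^{T_j=1}\to V^{T=1}$.} An element dying in $V$ dies in some $V_k$, and the transition maps on fixed vectors are injective by the first step.

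\emph{Surjectivity.} An element $x\in V^{T=1}$ is represented by some $w\in V_k$, and $T_kw-w$ vanishes in some $V_m$ with $m\ge k$. So we may assume $w\in V_m^{T_m=1}$. By the first step, $w$ comes from $V_j^{T_j=1}$ if $m\ge j$. If $m<j$, push $w$ forward to $V_j$; it remains fixed by the compatibility.

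Combining the two steps gives that $V_j^{T_j=1}\to\varinjlim V_k^{T_k=1}=V^{T=1}$ is an isomorphism. There is no real obstacle here. The only care needed is in the surjectivity step of the first part, where one must verify that $\rho_{j+1}w$ is again a fixed vector; this uses both relations together.
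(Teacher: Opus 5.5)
Your proof is correct and is essentially the paper's argument. The paper runs the same diagram chase directly with the composites $\rho_{k,j}=\rho_{j+1}\circ\dots\circ\rho_k$ and $\iota_{j,k}$: it uses $\rho_{k,j}\circ\iota_{j,k}=T_j^{k-j}$ for injectivity and takes $\rho_{k,j}(v_k)$ as the preimage for surjectivity, whereas you first prove the one-step bijection $V_j^{T_j=1}\cong V_{j+1}^{T_{j+1}=1}$ and then pass to the colimit, which also makes the needed commutation $\rho_{j+1}T_{j+1}=T_j\rho_{j+1}$ explicit.
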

\begin{proof}
Let $x\in V^{T=1} =\left( \varinjlim_j V_j\right)$. Represent $x$ by some $v\in V_m$. Assume 
$m\ge j$. Since $T(x)=x$, there exists some $k\ge m$ such that \[
v_k:=\iota_{m,k}(v)\in V_k
\]
and $T_k(v_k)=v_k$. Let
\[
\rho_{k,j}:= \rho_{j+1} \circ \dots \circ \rho_k \colon V_k\to V_j 
\]
and put $w:=\rho_{k,j}(v_k)$. Then \[
\rho_{k,j} \circ T_k = T_j \circ \rho_{k,j},
\]
so that $T_j(w) = \rho_{k,j}(v_k)=w$, whence \[
\iota_{j,k}(w)=T_k^{k-j}(v_k)=v_k.
\]
This shows that $w\in V_j^{T_j=1}$ represents $x$ and shows surjectivity.
On the other hand, suppose that $w\in V_j^{T_j=1}$ is mapped to $0$
under $\iota_j\colon V_j\to V$. Then, we find $k\ge j$ such that $\iota_{j,k}(w)=0$. Applying $\rho_{k,j}$ we then find \[
0=\rho_{k,j}\circ \iota_{j,k} (w)= T_j^{k-j}(w)=w,
\]
which shows injectivity.
\end{proof}
\begin{lem}
For every $j\ge n(M)$, put \[
E_{v,j}:= D_v \hat{\otimes}_{\CC_p,\pi} M^{(j)},\qquad Q_{v,j}:= D_v\hat{\otimes}_D M^{(j)}.
\]
Then the natural map \[
\theta_j\colon M^{(j)}\to \varprojlim_v Q_{v,j}
\]
is a $D$-linear, $\Psi$-equivariant topological isomorphism.
\end{lem}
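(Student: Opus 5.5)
The plan is to reduce the statement to the fact that $D=\varprojlim_v D_v$ is a Fréchet–Stein presentation with dense transition maps, together with the observation that the $D$-action on $M^{(j)}$ is, "annulus by annulus", already a $D_v$-action for $v$ large. I first fix the setup. $M^{(j)}$ is a Fréchet $\CC_p$-space, a projective limit of Banach spaces $M^{(j)}_w$ (the sections of the model over an exhausting sequence of closed sub-annuli). The $K$-analytic $\Gamma_n$-action makes $M^{(j)}$ a topological $D$-module, and $\Psi$ is continuous and commutes with $\Gamma_n$. Hence $\Psi$ is $D$-linear, and it induces continuous endomorphisms of $E_{v,j}$ and $Q_{v,j}$ via $\id\hat\otimes\Psi$. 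Then $\theta_j$ is well defined, $D$-linear and $\Psi$-equivariant. It is continuous because each $M^{(j)}\to Q_{v,j}$, $m\mapsto 1\otimes m$, is continuous. The real content is therefore bijectivity and openness.

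The key step is to show that for each $w$ there is $v(w)$ such that the $\Gamma_n$-action on the Banach module $M^{(j)}_w$ is analytic on a fixed disc in the coordinate $Z$. Consequently the action $D\times M^{(j)}_w\to M^{(j)}_w$ extends continuously to $D_v$ for all $v\ge v(w)$. I would prove this from the $K$-analyticity: the operator $\nabla$ and the orbit maps have uniform growth bounds on a closed annulus, as in the construction of $\mathbf{Dfm}$ in \cite{steingart2024iwasawa} used in Proposition~\ref{prop:DFMconsistent}. Granting this, the multiplication map $D_v\hat\otimes_{\CC_p}M^{(j)}_w\to M^{(j)}_w$ has the continuous $D_v$-linear section $m\mapsto 1\otimes m$. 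Since $D\to D_v$ has dense image, the relations $\lambda d\otimes m-\lambda\otimes dm$ have closure equal to its kernel. This yields $D_v\hat\otimes_D M^{(j)}_w\cong M^{(j)}_w$ topologically for $v\ge v(w)$.

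Next I would pass to the limit over $w$. Since completed projective tensor products commute with reduced projective limits of Fréchet spaces, $E_{v,j}=\varprojlim_w D_v\hat\otimes M^{(j)}_w$. The quotients $Q_{v,j}$ then become $\varprojlim_w D_v\hat\otimes_D M^{(j)}_w$ by a topological Mittag-Leffler argument (the transition maps have dense image). Taking $\varprojlim_v$ and interchanging the two limits, cofinality of the diagonal $v\ge v(w)$ gives $\varprojlim_v Q_{v,j}\cong\varprojlim_w M^{(j)}_w=M^{(j)}$. I would then check that this isomorphism is inverse to $\theta_j$, so that $\theta_j$ is a bijection. Since both sides are Fréchet, the open mapping theorem makes $\theta_j$ a topological isomorphism.

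I expect the main obstacle to be the interchange of completed quotients with projective limits. Hausdorff completions of quotient topologies need not commute with $\varprojlim$, and surjectivity onto the limit requires a genuine topological Mittag-Leffler statement. I would handle this by working with strict, dense-image systems of Fréchet spaces, and by using the explicit section $1\otimes(-)$, which shows that every relevant quotient map is split as a map of topological vector spaces. The uniform analyticity radius $v(w)$ on each closed annulus is the other input that has to be extracted carefully from the earlier results.
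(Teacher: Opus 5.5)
Your argument is correct, but it is organised differently from the paper's proof.

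The paper never decomposes $M^{(j)}$ along closed annuli. It quotes \cite[Lemma 3.21]{steingart2024iwasawa} for strict short exact sequences $0\to E_{v,j}\xrightarrow{Z_{diag}}E_{v,j}\to Q_{v,j}\to 0$ and $0\to D\hat{\otimes}_{\CC_p,\pi}M^{(j)}\xrightarrow{Z_{diag}}D\hat{\otimes}_{\CC_p,\pi}M^{(j)}\to M^{(j)}\to 0$. This is a one-term Koszul presentation, available because over $\CC_p$ the algebra $D$ is the ring of functions on the open disc in the single coordinate $Z$. The paper then applies Mittag-Leffler in $v$ (the $E_{v,j}$ have dense transition maps), uses $\varprojlim_v E_{v,j}=D\hat{\otimes}_{\CC_p,\pi}M^{(j)}$, and identifies the two cokernels.

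You instead run Mittag-Leffler in the annulus index $w$. You use that $D_v\hat{\otimes}_D M^{(j)}_w$ stabilises to $M^{(j)}_w$ once $v\ge v(w)$, and then swap the two limits. What each route buys:
\begin{itemize}
\item Your route uses nothing about $D$ beyond a Fréchet--Stein presentation with dense transition maps. It would therefore also work for distribution algebras that are not one-variable discs, and it makes transparent why the limit over $v$ recovers $M^{(j)}$.
\item The paper's route needs the coordinate $Z$. In exchange it produces the strict $Z_{diag}$-sequences, which are what tie $Q_v$ to the $Z$-Koszul complexes of $\mathbf{Dfm}_v(M)$ in the subsequent theorem.
\end{itemize}

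The input you flag as delicate is easier than you suggest. You do not need to compare group-theoretic analyticity radii with Fourier discs. Your phrase ``analytic on a fixed disc in the coordinate $Z$'' should be read as an operator bound on $Z$, since $Z$ is a coordinate on the character variety, not on $\Gamma_n$. Suppose $\Gamma_n$ acts locally $K$-analytically on the Banach space $M^{(j)}_w$ of sections over a closed annulus, as is standard for $K$-analytic modules. Then $D$ acts on $M^{(j)}_w$ separately continuously. A separately continuous bilinear map on a Fréchet space times a Banach space is jointly continuous. Hence $\|\lambda x\|\le C\|\lambda\|_{D_v}\|x\|$ for some $v$, which is exactly the extension to $D_v$ you need.

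One small imprecision: $m\mapsto 1\otimes m$ is $D_v$-linear only as a map into the quotient $D_v\hat{\otimes}_D M^{(j)}_w$, not into $D_v\hat{\otimes}_{\CC_p}M^{(j)}_w$. This does not affect the argument.
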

\begin{proof}
This is contained in \cite[Lemma 3.21.]{steingart2024iwasawa}. By this lemma, for every $v$ there is a compatible strict short exact sequence 
of Fréchet spaces \[
0 \to E_{v,j} \xrightarrow{Z_{diag}} E_{v,j}\to Q_{v,j}\to 0
\]
Since the transition maps $E_{v+1,j}\to E_{v,j}$ are dense, by Mittag Leffler, this gives a strict short exact sequence \[
0 \to \varprojlim_v E_{v,j}\xrightarrow{Z_{diag}} \varprojlim_{v} E_{v,j} \to \varprojlim_v Q_{v,j}\to 0.
\]
Since projective limits commute with projective tensor products $\varprojlim_v E_{v,j} = D\hat{\otimes}_{\CC_p,\pi} M^{(j)}$.
Again by \cite[Lemma 3.21.]{steingart2024iwasawa}, we have a strict exact sequence 
\[
0\to  D\hat{\otimes}_{\CC_p,\pi} M^{(j)}\xrightarrow{Z_{diag}}  D\hat{\otimes}_{\CC_p,\pi} M^{(j)} \to M^{(j)} \to 0.
\]
Combining with the above proves the lemma.
\end{proof}
\begin{thm} \label{thm:psicoadmissible}
Let $M$ be a $K$-analytic $(\varphi_K,\Gamma_K)$-module over $\cR_{\CC_p}$. Then $M^{\Psi=1}$ is a coadmissible $D^K(\Gamma_n,\CC_p)$-module.
\end{thm}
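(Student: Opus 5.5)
We show directly that $M^{\Psi=1}$ is the $\mathbf{R}\lim$ of the $\Psi$-invariants of the deformations, so that coadmissibility follows from consistency.

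\emph{Step 1: reduce $M^{\Psi=1}$ to a single model.} Write $M=\varinjlim_j M^{(j)}$, where $M^{(j)}$ are the models over $\cR_{\CC_p}^{[r_j,1)}$ with $r_j=r^{1/q^j}$ and $\iota_j$ the restriction maps. The operator $\Psi$ maps $M^{(j+1)}$ back to $M^{(j)}$, using $\varphi\colon M^{(j)}\to M^{(j+1)}$ and $\Psi\circ\varphi=\id$. We apply Lemma \ref{lem:fixed-vectors} with $T_j=\Psi|_{M^{(j)}}$ and $\rho_{j+1}=\Psi\colon M^{(j+1)}\to M^{(j)}$. Then $\rho_{j+1}\circ\iota_j=T_j$ and $\iota_j\circ\rho_{j+1}=T_{j+1}$ hold by construction, so $M^{\Psi=1}=(M^{(j)})^{\Psi=1}$ for every $j\ge n(M)$. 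This replaces the LF-space $M$ by a Fréchet space $M^{(j)}$, on which completed tensor products behave well.

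\emph{Step 2: pass to the deformations.} By the preceding lemma, $M^{(j)}\cong\varprojlim_v Q_{v,j}$ topologically, $D$-linearly and $\Psi$-equivariantly, where $Q_{v,j}=D_v\hat\otimes_D M^{(j)}$. Taking $\Psi$-fixed points commutes with $\varprojlim_v$, so
$$M^{\Psi=1}=(M^{(j)})^{\Psi=1}\cong\varprojlim_v Q_{v,j}^{\Psi=1}.$$
Next we identify $Q_{v,j}^{\Psi=1}$, after letting $j$ vary and applying Lemma \ref{lem:fixed-vectors} again to the system $(Q_{v,j})_j$, with $(D_v\hat\otimes_D M)^{\Psi=1}$. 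Via the strict exact sequence $0\to E_{v,j}\xrightarrow{Z_{diag}}E_{v,j}\to Q_{v,j}\to0$, this is the $H^1$ of the complex $C_{an}(\mathbf{Dfm}_v(M))$ computed with the resolution $[D\xrightarrow{Z}D]$. In other words, we identify $Q_{v,j}$ with the $Z$-coinvariants of $E_{v,j}=\mathbf{Dfm}_v$ (twisted action), so that its $\Psi$-invariants sit in the Koszul complex $C_{\Psi,Z}$. Proposition \ref{prop:DFMconsistent} says these complexes form a consistent system, so each $H^1$ is finitely generated over $D_v$ and the system is compatible under $D_v\otimes_{D_{v+1}}$.

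\emph{Step 3: conclude via Lemma \ref{lem:consistent}.} Set $C:=\mathbf{R}\lim_v C_{an}(\mathbf{Dfm}_v(M))$. This complex has coadmissible cohomology, and $H^1(C)\cong\varprojlim_v H^1(C_v)$ because the $\varprojlim^1$ of the finitely generated $H^0$'s over the Banach algebras $D_v$ vanishes (Schneider–Teitelbaum Theorem B). The compatible maps $M^{\Psi=1}\to H^1(C_v)$ from Step 2 assemble into a map $M^{\Psi=1}\to H^1(C)$. We then show this map is bijective, using the exact sequences at each level together with the topological isomorphism $\theta_j$: the $\varprojlim^1$ terms vanish by the Mittag-Leffler property for the dense transitions $E_{v+1,j}\to E_{v,j}$. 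Once the map is an isomorphism, $M^{\Psi=1}$ inherits coadmissibility from $H^1(C)$.

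The main obstacle is Step 2, namely matching $Q_{v,j}^{\Psi=1}$ with the cohomology $H^1(C_v)$ exactly and not merely up to the $\Psi-1$ cokernel terms, and controlling $\varprojlim^1$ of the $\Psi$-coinvariants. To handle this, we first use strictness of the sequences from Lemma 3.21 and the topological nature of $\theta_j$, so that all relevant images are closed and the Fréchet Mittag-Leffler argument applies.
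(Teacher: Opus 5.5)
Your proposal is correct and follows essentially the paper's proof: Lemma \ref{lem:fixed-vectors} applied to $(M^{(j)})_j$ and to $(Q_{v,j})_j$, the isomorphism $\theta_j$ combined with the fact that $\Psi$-invariants commute with $\varprojlim_v$, the identification $H^1(C_{an}(\mathbf{Dfm}_v(M)))\cong Q_v^{\Psi=1}$, and consistency via Proposition \ref{prop:DFMconsistent} and Lemma \ref{lem:consistent}; for the identification, the paper quotes the proof of Theorem 3.24 in \cite{steingart2024iwasawa}, while you derive it from the strict sequences $0\to E_{v,j}\xrightarrow{Z_{diag}}E_{v,j}\to Q_{v,j}\to 0$. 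The ``main obstacle'' you flag is not really one: injectivity of $Z_{diag}$ collapses $C_{an}(\mathbf{Dfm}_v(M))$ to $[Q_v\xrightarrow{\Psi-1}Q_v][-1]$, so $H^1$ is exactly $Q_v^{\Psi=1}$ and the $\Psi$-coinvariants never enter; moreover Step 3 needs no further Mittag-Leffler argument, because the chain of isomorphisms from Step 2 already gives the bijection.
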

\begin{proof}
For every $v$ set\[
A_v:= C_{an}(\operatorname{Dfm}_v(M))
\]
By Proposition~\ref{prop:DFMconsistent}, the system $(A_v)_v$ is consistent. It follows that 
\[
Y:= H^1(R\varprojlim_v A_v)
\]
is a coadmissible $D$-module and that the natural map \[
Y\xrightarrow{\sim} \varprojlim_v H^1(A_v)
\]
is an isomorphism. Put\[
Q_v:= \varinjlim_j Q_{v,j}. 
\]\footnote{ $Q_v$ is denoted by $D_v\tilde{\otimes}_D M$ in \cite{steingart2024iwasawa} .}
In the proof of \cite[Theorem 3.24]{steingart2024iwasawa} it was shown that there are compatible quasi-isomorphisms
\[
A_v \xrightarrow{\sim} C_{\Psi}(Q_v).
\]
Note that this part of the proof in \emph{loc. cit.} does not make use of the coadmissible assumption. Together with the isomorphism above 
we obtain a $D$-linear isomorphism 
\[
Y \xrightarrow{\sim} \varprojlim_v Q_v^{\Psi=1}.
\]
We now choose $j_0\ge n(M)$ large enough such that $\iota_{j,j+1}\colon M^{(j)}\to M^{(j+1)}$
and $\rho_{j+1,j}:= \Psi\colon M^{(j+1)}\to M^{(j)}$ and $\Psi:= \rho_{j+1,j} \circ \iota_{j,j+1}$ 
are all defined for all $j\ge j_0$, so that in particular $M=\varinjlim_{j\ge j_0}M^{(j)}$ and similarly 
for $Q_v$. Then, by the lemma above we have 
\[
Q_{v,j_0}^{\Psi=1} = Q_v^{\Psi=1}, \qquad (M^{(j_0)})^{\Psi=1} = M^{\Psi=1}.
\]
Combining everything, we obtain 
\begin{align*}
Y &\cong \varprojlim_v Q_v^{\Psi=1}\\
&\cong \varprojlim Q_{v,j}^{\Psi=1}\\
&\cong \left(\varprojlim_v Q_{v,j}\right)^{\Psi=1}\\
&\cong \left(M^{(j)}\right)^{\Psi=1}\\
& \cong M^{\Psi=1}.
\end{align*}
One checks that all the involved maps are $D$-linear.
\end{proof}

In a next step we descend our results from $\CC_p$ to $K$. 
For now let $K\subseteq F\subset \CC_p$ be a complete intermediate field. 
Our descent result can be seen as a topological version of classical faithfully flat descent for modules in the setting 
of Fréchet--Stein algebras. Attempting a Galois descent argument in this context runs into trouble, for example the trivial (semi-linear) representation $\CC_p$ already has non-trivial $H^1(G_{\QQ_p},\CC_p).$ Instead we will work with an abstract section $\CC_p \to K$ (obtained from the Hahn--Banach Theorem) and some functional analysis to arrive at a ``faithful flatness'' result for completed base change which will be sufficient for our specific situation.  Let $\mathcal{C}$  
denote the category of complete Hausdorff locally convex $K$-vector spaces with continuous $K$-linear maps as morphisms.
The key input is the following straightforward consequence of the Hahn--Banach theorem.
\begin{lem}\label{lem:split-descend}
Let $E, E'\in \mathcal{C}$.
\begin{enumerate}
	\item $E=0$ if and only if $F\hat{\otimes}_K E =0$;
	\item Let $f\colon E\to E'$ be a morphism in $\mathcal{C}$ and put $f_F:=\operatorname{id}_F\hat{\otimes}f$. If 
	$f_F$ is injective (resp. surjective) (resp. bijective) (resp. a homeomorphism), then so is $f$.
\end{enumerate}
\end{lem}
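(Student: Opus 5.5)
The plan is to produce a continuous $K$-linear retraction $\lambda\colon F\to K$ with $\lambda|_K=\id_K$, and then use it to show that $E\to F\hat{\otimes}_K E$ is a topological embedding with a continuous left inverse. Since $K$ is spherically complete (it is a finite extension of $\QQ_p$ with discrete valuation), the Hahn--Banach theorem over spherically complete fields applies. It extends the identity $K\to K$, a continuous linear functional on the one-dimensional subspace $K\subseteq F$, to a continuous $K$-linear functional $\lambda\colon F\to K$.

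Consider the map
$$\lambda\hat{\otimes}\id_E\colon F\hat{\otimes}_K E\to K\hat{\otimes}_K E=E.$$
It is continuous, because the completed projective tensor product is functorial for continuous linear maps. Composing it with the natural map $j_E\colon E\to F\hat{\otimes}_K E$, $x\mapsto 1\otimes x$, gives the identity of $E$. Hence $j_E$ is injective and a homeomorphism onto its image, and $E$ is a topological direct summand.

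Part (1) follows at once. If $F\hat{\otimes}_K E=0$, then $\id_E$ factors through $0$, so $E=0$. The converse direction is trivial.

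For part (2), naturality gives the commutative square
$$f_F\circ j_E=j_{E'}\circ f,\qquad f\circ(\lambda\hat{\otimes}\id_E)=(\lambda\hat{\otimes}\id_{E'})\circ f_F.$$
The second identity holds on elementary tensors, and since both sides are continuous maps into the Hausdorff space $E'$, it holds on the whole completion. The four cases then go as follows.

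\emph{Injectivity.} If $f_F$ is injective, then $f_F\circ j_E=j_{E'}\circ f$ is injective, so $f$ is injective.

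\emph{Surjectivity.} Given $y\in E'$, choose $z$ with $f_F(z)=j_{E'}(y)$. Then
$$f\bigl((\lambda\hat{\otimes}\id)(z)\bigr)=(\lambda\hat{\otimes}\id)(j_{E'}(y))=y.$$

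\emph{Bijectivity.} This combines the two previous cases.

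\emph{Homeomorphism.} If $f_F$ is a homeomorphism, then $f$ is bijective by the above. Its inverse is
$$f^{-1}=(\lambda\hat{\otimes}\id_E)\circ f_F^{-1}\circ j_{E'}.$$
To check this, note that
$$f\circ(\lambda\hat{\otimes}\id_E)\circ f_F^{-1}\circ j_{E'}=(\lambda\hat{\otimes}\id_{E'})\circ j_{E'}=\id.$$
Since $f$ is injective, this map is indeed $f^{-1}$. It is continuous as a composite of continuous maps.

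The main obstacle is the existence of $\lambda$. The Hahn--Banach theorem needs spherical completeness of the base field, which $K$ has, and it needs $F$ to be a locally convex $K$-space, which holds because $F$ is normed. The remaining steps are formal. One more point needs care: the natural identification $K\hat{\otimes}_K E\cong E$ for complete Hausdorff $E$, which justifies regarding $\lambda\hat{\otimes}\id_E$ as landing in $E$.
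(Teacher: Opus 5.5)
Your proposal is correct and follows the paper's proof: a Hahn--Banach retraction $\lambda\colon F\to K$ with $\lambda(1)=1$, the section $j_E$ with continuous left inverse $\lambda\hat{\otimes}\id_E$, and the two naturality relations, which transfer injectivity, surjectivity and the continuous inverse $(\lambda\hat{\otimes}\id_E)\circ f_F^{-1}\circ j_{E'}$. The only difference is cosmetic: you read off (1) directly from the retraction, whereas the paper deduces it from (2).
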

\begin{proof}
(1) follows from (2). By the Hahn-Banach theorem the identity on the subspace 
$K\cdot 1\subseteq F$ can be extended to a bounded $K$-linear map  \[
\ell\colon F\to K,\qquad \ell(1)=1.
\]
For every complete Hausdorff locally convex $K$-space $E$, we therefore have natural continuous maps \[
\iota_E\colon E \to F\hat{\otimes}_K E, \qquad e\mapsto 1\otimes e,
\]
and \[
\pi_E:=\ell\hat{\otimes} \operatorname{id}_E\colon F\hat{\otimes}_K E\to E,
\]
satisfying \begin{equation}\label{eq:pi-iota-id}
\pi_E\circ \iota_E = \operatorname{id}_E.
\end{equation} 
In particular, $\iota_E$ identifies $E$ with a closed complemented subspace in $F\hat{\otimes}_K E$.
Suppose now that $f\colon E\to E'$ is continuous. We have the following relations \[
f_F\circ \iota_E = \iota_{E'} f, \qquad \pi_{E'}f_F = f \pi_E.
\]
For injectivity, if $f(x)=0$, then \[
f_F(\iota_E(x))=\iota_{E'}(f(x))=0.
\]
Injectivity of $f_F$ gives $\iota_E(x)=0$ and applying $\pi_E$ gives $x=0$. For surjectivity, given $y\in E'$, choose \[
z\in F\hat{\otimes}_K E \text{ with } f_F(z)=\iota_{E'}(y).
\]
Again using the relations above, we obtain \[
f(\pi_E(z))=\pi_{E'}(f_F(z))=\pi_{E'}(\iota_{E'}(y))=y.
\]
Finally if $f_F$ is a topological isomorphism, its inverse is continuous and \[
f^{-1} = \pi_E \circ f_F^{-1}\iota_{E'}
\]
is a continuous inverse to $f$.
\end{proof}

Let $A=\varprojlim A_m$ be a Fréchet--Stein algebra over $K$ and define $B_m:=F\hat{\otimes}_K A_m$ and $B=\varprojlim B_m$.
Assume that $B$ is a Fréchet--Stein algebra over $F$.
\begin{thm}\label{thm:galoisdescentadmissible}
Let $N$ be a Fréchet space over $K$ and suppose that \begin{enumerate}
	\item $N$ admits a continuous action of $A$;
	\item $P:=F\hat{\otimes}_K N$ is a coadmissible $B$-module, whose given Fréchet topology coincides with its coadmissible topology.
\end{enumerate}
Then $N$ is a coadmissible $A$-module.
\end{thm}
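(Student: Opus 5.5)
The plan is to build the natural candidate $N_m:=\widehat{A_m\otimes_A N}$ and show that the system $(N_m)_m$ exhibits $N$ as coadmissible. We then transfer every needed property from the $B$-side to the $A$-side using Lemma \ref{lem:split-descend}. Concretely, for each $m$ let $N_m$ be the Hausdorff completion of $A_m\otimes_A N$, where the tensor product carries the quotient topology from $A_m\otimes_K N$, i.e.\ $N_m = A_m\widehat{\otimes}_A N$. Associativity of completed tensor products gives
\[
F\widehat{\otimes}_K N_m \cong B_m\widehat{\otimes}_B P .
\]
Since $P$ is coadmissible and its Fréchet topology is the canonical one, $B_m\widehat{\otimes}_B P = B_m\otimes_B P =: P_m$ is a finitely generated $B_m$-module with its Banach topology, and $P\cong\varprojlim_m P_m$ topologically.

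\textbf{Step 1 (finite generation of $N_m$).} Choose finitely many generators of $P_m$ over $B_m$. Using that $F\otimes_K N$ is dense in $P$, and that $P \to P_m$ has dense image, we can perturb these generators to lie in the image of $N$ and still generate; here the open mapping theorem for finitely generated Banach modules over the noetherian Banach algebra $B_m$ shows that small perturbations of a generating set still generate. Next take $x_1,\dots,x_r\in N$ whose images in $P_m$ generate. The map $f\colon A_m^r\to N_m$ then satisfies that $f_F\colon B_m^r\to P_m$ is surjective. By Lemma \ref{lem:split-descend}(2) $f$ is surjective, so $N_m$ is a finitely generated $A_m$-module.

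\textbf{Step 2 (compatibility).} The map $A_m\otimes_{A_{m+1}}N_{m+1}\to N_m$ becomes, after applying $F\widehat{\otimes}_K-$, the isomorphism $B_m\otimes_{B_{m+1}}P_{m+1}\cong P_m$. To compare algebraic and completed tensor products we use that all modules involved are finitely generated over noetherian Banach algebras, so they carry canonical Banach topologies and algebraic tensor products are already complete. Lemma \ref{lem:split-descend} then gives that the map is a bijection. Hence $(N_m)_m$ is a coherent sheaf for $A$, and $\widetilde N:=\varprojlim N_m$ is coadmissible.

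\textbf{Step 3 ($N\cong \widetilde N$).} Consider the natural continuous map $g\colon N\to\widetilde N$ of Fréchet spaces. Since completed tensor products with the Banach space $F$ commute with countable projective limits of Fréchet spaces, $g_F$ identifies with $P\to\varprojlim_m P_m$. By hypothesis (2) this is a topological isomorphism. Lemma \ref{lem:split-descend}(2) then gives that $g$ is a topological isomorphism, which proves the claim.

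The main obstacle I expect is the careful identification of the topologies. One issue is justifying $F\widehat{\otimes}_K N_m\cong P_m$ and its compatibility with projective limits, i.e.\ that $F\widehat{\otimes}_K-$ commutes with the relevant quotients and limits. The other is the perturbation argument in Step 1, which produces generators coming from $N$. For the first I would use that $F$ is of countable type (or work with the projective tensor product, which is exact on strict sequences of Fréchet spaces after Hausdorff completion). If perturbation turns out to be delicate, a fallback is to replace Step 1 by applying the surjectivity part of Lemma \ref{lem:split-descend} directly to a map $A_m^r\to N_m$ built from a dense subset.
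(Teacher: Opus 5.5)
Your argument is correct, and Steps 1 and 2 follow the paper's proof closely. Like the paper, you set $N_m=A_m\widehat{\otimes}_A N$, identify $F\widehat{\otimes}_K N_m\cong P_m$, perturb generators using the open mapping theorem and a Neumann series, and descend surjectivity and the coherence isomorphisms via Lemma \ref{lem:split-descend}.

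There are three small points in these steps.
\begin{enumerate}
\item The perturbed generators must land in the image of $F\otimes_K N$, not of $N$ alone. The image of $N$ lies in the closed complemented $K$-subspace $1\otimes N_m$ of $P_m$, which is not dense unless $F=K$ or $N_m=0$. Expanding the perturbed generators as $\sum_j\lambda_{ij}\otimes x_j$ and using $F\subset B_m$ then produces your $x_j\in N$, which is what your next sentence effectively does.
\item You rely on density of the image of $P\to P_m$, a standard property of coadmissible modules. The paper instead perturbs inside $F\otimes_K N_m$, which is dense in $F\widehat{\otimes}_K N_m=P_m$ by definition.
\item For the canonical Banach topology on $N_m$ in Step 2, you need that $N_m$ is Fr\'echet (it is the completion of a metrizable space) and then the open mapping theorem. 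The paper gets this more directly by exhibiting $N_m$ as a closed complemented subspace of $P_m$.
\end{enumerate}

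Where you genuinely differ is Step 3. You apply $F\widehat{\otimes}_K-$ to $g\colon N\to\varprojlim_m N_m$ and commute the completed tensor product with the countable projective limit. This identifies $g_F$ with the canonical map $P\to\varprojlim_m P_m$, which is a homeomorphism by hypothesis (2), and the homeomorphism clause of Lemma \ref{lem:split-descend} finishes the proof. This is valid. The field $F$ is of countable type over the discretely valued field $K$, so $F\cong c_0(\NN)$ or $F$ is finite-dimensional. Then $F\widehat{\otimes}_K V\cong c_0(\NN,V)$, and $c_0(\NN,-)$ visibly commutes with projective limits. You should state this justification explicitly rather than leave it as an expected obstacle.

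The paper avoids this extra functional-analytic input. It uses the compatible sections $\iota_m$ and retractions $\pi_m$, with $v_m\circ\iota_N=\iota_m\circ u_m$ and $\pi_m\circ v_m=u_m\circ\pi_N$, to prove bijectivity of $N\to\varprojlim_m N_m$ by hand. It then invokes the open mapping theorem for Fr\'echet spaces. Your version is shorter and routes every step through the same descent lemma. The paper's version is more elementary and needs nothing beyond the retraction identities.
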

\begin{proof}
We use the notation from the proof of Lemma~\ref{lem:split-descend}. We define $N_m:=A_m\hat{\otimes}_A N$. 
Write $P=\varprojlim P_m$ with $P_m$ being finitely generated Banach $B_m$-modules. We claim that 
\begin{equation}\label{eq:distributive-complexted-tp}
F\hat{\otimes}_K N_m \cong B_m \hat{\otimes}_B P \cong B_m\otimes_B P \cong P_m.
\end{equation}
The second and third isomorphisms are clear by coadmissibility of $P$, hence it suffices to show that\[
\alpha\colon F\hat{\otimes}_K (A_m\hat{\otimes}_A N) \to (F\hat{\otimes}_K A_m)\hat{\otimes}_{F\hat{\otimes}_K A}(F\hat{\otimes}_K N)
\]
given on elementary tensors by \[
\lambda\otimes (a\otimes x) \mapsto (\lambda\otimes a) \otimes (1\otimes x)
\]
is a topological isomorphism. Its inverse is induced by
$$((\lambda\otimes a),(\mu\otimes x))\longmapsto\lambda\mu\otimes(a\otimes x).$$
The universal property of the completed projective tensor product, followed by the balanced quotient, gives continuous maps in both directions. 
They are inverse on elementary tensors and hence everywhere by density and completeness.

We claim that $N_m$ is finite over $A_m$ for which we follow the argument of \cite[Lemma 2.1.12]{KPX}. By \eqref{eq:distributive-complexted-tp} and \eqref{eq:pi-iota-id}, 
we can identify $N_m$ with a closed complemented $K$-subspace of the Banach space $P_m$. Thus $N_m$ is itself Banach.
Choose $B_m$-module generators $y_1,\dots, y_t\in P_m$ of $P_m$. The continuous surjection \[
q\colon B_m^t\to P_m,\qquad (b_i)\mapsto \sum_i b_iy_i
\]
is open by open mapping for Banach spaces. We define \[
U:=\{(b_j)\in B_m^t\colon \operatorname{max}_j \norm{b_j}_{B_m} < \frac{1}{2}\}.
\]
Since $q$ is open, $q(U)\subset P_m$ is open. In particular, we find $\delta>0$ such that
\begin{equation}\label{eq:small-enough}
\{z\in P_m\colon \norm{z}_{P_m} <\delta\} \subset q(U).
\end{equation}
By \eqref{eq:distributive-complexted-tp}, we have the topological identification $P_m\cong F\hat{\otimes}_K N_m$, and by definition,
$F\otimes_K N_m$ is a dense subspace. Hence, for each $i$, we can choose \[
y_i' \in F\otimes_K N_m \text{ such that } \norm{y_i'-y_i}_{P_m} <\delta.
\]
By \eqref{eq:small-enough}, applied to $z=y_i'-y_i$ we find that 
\begin{equation}\label{eq:new-generators}
y_i'-y_i = \sum_j a_{ij} y_j,\qquad \norm{a_{ij}}_{B_m} < \frac{1}{2} < 1.
\end{equation}
Put $C:=(a_{ij})_{ij}$. Since $\norm{C} = \max_{i,j} \norm{a_{ij}}_{B_m} < 1$, the series \[
(I+C)^{-1} = I-C+C^2-C^3+\dots
\]
converges in $M_t(B_m)$. Hence the $y_i'$ generate $P_m$. We can find finitely many $x_1,\dots x_s\in N_m$ such that\[
y_i' =\sum_j \lambda_{ij} \otimes x_j.
\]
Therefore, the $1\otimes x_1,\dots, 1\otimes x_s$ generate $F\hat{\otimes}_K N_m$. Consider\[
f\colon A_m^s\to N_m,\qquad (a_j) \mapsto \sum_j a_jx_j.
\]
Then $f_F=\operatorname{id}_F\hat{\otimes}_K f$ is surjective, hence by Lemma~\ref{lem:split-descend}, $f$ is surjective as well.
Thus $N_m$ is finitely generated over $A_m$. Next we claim that $(N_m)_{m\ge1}$ defines a coherent system over $A$. 
The natural transition maps induce maps
\begin{equation}\label{eq:transition-maps-blabla}
	f_m\colon A_m\otimes_{A_{m+1}}N_{m+1} \to N_m,
\end{equation}
with the source and target being finitely generated $A_m$-modules. We claim that
\begin{equation}\label{eq:transition-maps-completed}
A_m\otimes_{A_{m+1}} N_{m+1} \cong A_m\hat{\otimes}_{A_{m+1}} N_{m+1}.
\end{equation}
Indeed, since $A_{m+1}$ is noetherian, we find a finite presentation\[
A_{m+1}^r \to A_{m+1}^s \to N_{m+1} \to 0.
\]
Base change to $A_m$ shows that $A_m \otimes_{A_{m+1}} N_{m+1}$ is the quotient of a finite free $A_m$ module by a closed submodule.
Hence $A_m \otimes_{A_{m+1}} N_{m+1}$ with its quotient topology is already complete and it suffices to show that this coincides with the 
projective tensor topology. To verify this, write $R=A_{m+1}$, $S=A_m$ and $Q=N_{m+1}$, 
and let $T=S\otimes_RQ$ carry the quotient Banach topology from the finite presentation. 
The balanced map $S\times Q\to T$ is jointly continuous: lift it to $S\times R^s$ 
and descend along the open quotient $R^s\to Q$. It induces $S\widehat{\otimes}_RQ\to T$. 
Conversely, the chosen generators of $Q$ induce a continuous map $S^s\to S\widehat{\otimes}_RQ$ vanishing on the relation submodule, 
hence a map $T\to S\widehat{\otimes}_RQ$. The maps are inverse on elementary tensors, and then on their completions.
By \eqref{eq:distributive-complexted-tp}, the completed base change of \eqref{eq:transition-maps-blabla} from $K$ to $F$
identifies with \[
B_m\otimes_{B_{m+1}} P_{m+1} \to P_m.
\]
Since $P$ is coadmissible, this is an isomorphism, so that $f_m$ is already an isomorphism by Lemma~\ref{lem:split-descend}.
It remains to identify $N$ with $\varprojlim N_m$. Let \[
u\colon N\to \varprojlim_m N_m
\]
be the canonical map. Using \eqref{eq:distributive-complexted-tp} together with the maps in the proof of Lemma~\ref{lem:split-descend}
we obtain inclusion and retraction maps \[
\iota_m\colon N_m \to P_m,\qquad  \pi_m \colon P_m\to N_m,
\]
Furthermore, let \[
u_m\colon N\to N_m,\qquad v_m\colon P\to P_m
\]
be the natural maps and let $\iota_N\colon N\to P$ and $\pi_N\colon P\to N$
be the inclusion and retraction maps from the proof of Lemma~\ref{lem:split-descend}.
One easily verifies the following identities:
\begin{equation}\label{eq:some-identities}
	v_m\circ \iota_N = \iota_m \circ u_m,\qquad \pi_m\circ v_m = u_m \circ \pi_N.
\end{equation}
We directly show that $u$ is an isomorphism. For injectivity, suppose that $u(x)=0$. Then,
$u_m(x)=0$ for all $m\ge 1$. By \eqref{eq:some-identities}, we have\[
v_m(\iota_N(x)) = \iota_m(u_m(x))= 0\qquad {\text{ for all }} m.
\]
Since $P\cong \varprojlim_m P_m$ via $y\mapsto (v_m(y))_m$, we see that $\iota_N(x)=0$.
Applying the retraction $\pi_N$ we obtain \[
x=\pi_N(\iota_N(x)) = 0,
\]
proving injectivity. For surjectivity, let $(x_m)_m \in \varprojlim_m N_m$. Then $(\iota_m(x_m))_m \in \varprojlim_m P_m = P$. 
So, there exists some $y\in P$, such that \[
v_m(y)= \iota_m(x_m) \qquad \text{for all } m.
\]
Set $x=\pi_N(y)$. Then by \eqref{eq:some-identities}\[
u_m(x)=\pi_m v_m(y)=\pi_m\iota_m (x_m) = x_m.
\]
Thus $u(x)=(x_m)_m$ proving surjectivity.
Since $u$ is a continuous bijection of Fréchet spaces, $u$ is a topological isomorphism, finishing the proof.
\end{proof}

\begin{lem}\label{lem:coadm-top-given-top}
Let $M$ be a Fréchet space over $F$ and let $A$ be a Fréchet-Algebra over $F$. Suppose that 
$M$ admits a separately continuous action of $A$, which makes it a coadmissible, finitely generated $A$-module.
Then, the coadmissible topology of $M$ coincides with its given Fréchet topology.
\end{lem}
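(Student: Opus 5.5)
Since $M$ is finitely generated over $A$, the plan is to present $M$ as a quotient of $A^t$ and apply the open mapping theorem for Fréchet spaces.

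Write $A = \varprojlim_m A_m$ as a Fréchet--Stein algebra. By definition the coadmissible topology on $M$ is the projective limit topology of $M \cong \varprojlim_m M_m$, where $M_m = A_m \otimes_A M$ is finitely generated over the Banach algebra $A_m$ and carries its canonical Banach topology. Equivalently, it is the quotient topology from any surjection $A^t \to M$. By \cite[Lemma 3.6]{schneider2003algebras}, coadmissible modules are closed under cokernels of $A$-linear maps between coadmissible modules, so the kernel of such a surjection is closed for the coadmissible topology. Both topologies are therefore Fréchet topologies on the same vector space.

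Step one: fix generators $x_1,\dots,x_t$ of $M$ and the surjection
$$q\colon A^t \to M,\qquad (a_i)\mapsto \sum_i a_i x_i.$$
For the given Fréchet topology of $M$, each map $a\mapsto a x_i$ is continuous, because the action is separately continuous and we fix the second variable. Hence $q$ is a continuous surjection of Fréchet spaces. By the open mapping theorem (valid for Fréchet spaces over a spherically complete or arbitrary complete nonarchimedean field, \cite[Prop. 8.6]{schneiderteitlbaumlocallyanalytic} or PGS), $q$ is open. So the given topology equals the quotient topology induced from $A^t$ via $q$.

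Step two: show that the coadmissible topology is also the quotient topology via $q$. Tensoring $q$ with $A_m$ gives surjections $A_m^t \to M_m$. These are open by the Banach open mapping theorem, and they are compatible in $m$. The projective limit of these surjections is $q$ itself, and the kernels $\ker(A_m^t\to M_m)$ form a coadmissible system, so $\varprojlim^1$ vanishes (Theorem B). Then $A^t \to \varprojlim M_m$ is a continuous surjection of Fréchet spaces from $A^t = \varprojlim A_m^t$. By open mapping again it is open, so the coadmissible topology is the quotient topology from $q$.

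Combining the two steps, both topologies equal the quotient topology of $A^t$ under $q$, so they coincide. Alternatively, one can observe that the identity map from the given topology to the coadmissible topology is a continuous bijection and then apply open mapping.

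The main obstacle is the continuity in step one: we only have separate continuity. Because the generators are fixed, continuity of $a\mapsto ax_i$ is all that is needed, so the obstacle disappears. A secondary point is making sure the coadmissible topology is Hausdorff and complete. This follows from $M\cong\varprojlim M_m$ with each $M_m$ Banach, the Hausdorffness of $M_m$ coming from submodules of finitely generated $A_m$-modules being closed.
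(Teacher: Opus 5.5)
Your proposal is correct and follows the paper's argument. The paper's proof is exactly your Step one: a surjection $A^d \to M$ is a continuous surjection of Fréchet spaces by separate continuity, hence open. Your Step two makes explicit the standard fact, left implicit in the paper, that the coadmissible (canonical) topology is also the quotient topology from any such surjection.
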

\begin{proof}
Take any surjection $A^d \to M.$ This map is a continuous surjection of Fréchet spaces and hence open by the Open Mapping Theorem. 
\end{proof}

\subsection{Computing the rank of $M^{\Psi=1}$}
Having shown that $M^{\Psi=1}$ is coadmissible over $D^K(\Gamma_n,\CC_p)$, we now turn to investigating its finiteness and rank. Let $F$ be either $\CC_p$ or $K$.
For our choice of $\Gamma_n$ we have that $D=\mathcal{D}^K(\Gamma_n,F)$ is a Prüfer domain, which
can be written as a projective limit of Dedekind domains $D_n$ with flat transition maps each having dense image with respect to the 
corresponding Banach norms. 
\begin{defn}
Let $M$ be a $K$-analytic $(\varphi_K,\Gamma_K)$-module over $\cR_F$, we define the heart of $M$ as $$\mathcal{C}(M):= (1-\varphi_K)M^{\Psi=1}.$$
\end{defn}
\begin{lem}
Let $M$ be a $K$-analytic $(\varphi_K,\Gamma_K)$-module over $\cR_F$.
There is a short exact sequence
$$0 \to M^{\varphi_K=1} \to M^{\Psi=1} \to \mathcal{C}(M) \to 0.$$
The heart of $M$ is a flat module over $D$ and $M^{\varphi_K=1}$ is finite dimensional. 
\end{lem}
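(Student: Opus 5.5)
The exact sequence is formal. On $M^{\Psi=1}$ we have $\Psi\circ\varphi_K=\id$, so $\Psi(1-\varphi_K)m = m - m = 0$ for $m\in M^{\Psi=1}$. Hence $1-\varphi_K$ maps $M^{\Psi=1}$ into $M^{\Psi=0}$. By definition its image is $\mathcal{C}(M)$, and its kernel is $M^{\Psi=1}\cap M^{\varphi_K=1}$. This intersection equals $M^{\varphi_K=1}$, because $\varphi_K m = m$ implies $\Psi m = \Psi\varphi_K m = m$. The map is $D$-linear since $\varphi_K$ and $\Psi$ commute with $\Gamma_K$, hence with the extended $\mathcal{D}^K(\Gamma_n,F)$-action. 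This gives the short exact sequence.

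For finite dimensionality of $M^{\varphi_K=1}$ I would use slope/Frobenius arguments: $M^{\varphi_K=1}\subseteq M^{\Psi=1}$ is the $H^0$ of the $\varphi$-part. The standard argument (as in \cite{KPX}) shows $\dim_F M^{\varphi_K=1}\le \operatorname{rank}(M)$. Fix a model $M^{[r,1)}$; a $\varphi$-invariant element lies in every $M^{[r^{1/q^k},1)}$ via iterating $\varphi$ back, so it is determined on a fixed closed annulus where $M$ is a finite Banach module. There $\varphi$-invariants form a space on which $\varphi$ acts compactly relative to restriction, and a compact operator with eigenvalue one has finite-dimensional eigenspace. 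Alternatively, I would compare with $H^0$ of the attached $B$-pair $W_e$ (over $\CC_p$ through $\tilde{\mathbf{B}}$), which is bounded by the rank as in \cite[Corollary 2.14]{berger2021triangulable}.

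The main step is flatness of $\mathcal{C}(M)$ over $D$. Since $D$ is Prüfer, flat is equivalent to torsion-free, so it suffices to show $\mathcal{C}(M)\subseteq M^{\Psi=0}$ has no $D$-torsion. I would prove that $M^{\Psi=0}$ itself is torsion-free over $D$. The Lubin--Tate analogue of the structure theorem states that $M^{\Psi=0}\cong \bigoplus \mathcal{R}_F(\Gamma)$-type modules: $M^{\Psi=0}$ is free over the Robba ring of $\Gamma_n$, hence over $D$ (in the form $D\hat\otimes$ of a basis), compare \cite{SchneiderVenjakobRegulatorBook} and \cite{BFFanalytic}. Concretely, for $\lambda\in D$ nonzero, Fourier theory identifies $D$ with functions on an open disc (over $\CC_p$, or after completed base change to $\CC_p$ using Lemma~\ref{lem:split-descend} for $F=K$). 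Any $\lambda$ factors as a product of linear factors $Z-z$ times a unit on each affinoid, so it suffices to show that $Z-z$ acts injectively on $M^{\Psi=0}$. Twisting by a character reduces this to injectivity of $Z$, i.e.\ of $\gamma-1$ on $M^{\Psi=0}$, and that operator is invertible on $M^{\Psi=0}$ (the Lubin--Tate analogue of the invertibility of $\gamma-1$ on $\psi=0$ parts).

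I expect this last injectivity to be the main obstacle, in particular handling infinite products of zeros of $\lambda$ while staying inside $M^{\Psi=0}$. The local-to-global reduction will rely on $M^{\Psi=0}$ being a module over a Fréchet completion in which torsion is detected on each $D_\nu$.
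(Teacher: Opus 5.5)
Your derivation of the exact sequence is fine. The input you name for flatness, that $M^{\Psi=0}$ is finitely generated projective (over $\CC_p$: free) over the Robba ring $\cR(\Gamma_n)$ of $\Gamma_n$, is exactly what the paper uses (\cite[Theorem 2.4.5]{STEINGART202424}). From there you should stop. $D$ embeds into the domain $\cR(\Gamma_n)$, since a nonzero function on the open disc cannot vanish identically on an annulus near the boundary. A projective module over a domain is torsion-free, so every nonzero $\lambda\in D$ acts injectively on $M^{\Psi=0}$, hence on $\mathcal{C}(M)$, and the Prüfer property gives flatness.

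The ``main obstacle'' you anticipate therefore does not exist, and the concrete route you propose instead would not work as stated:
\begin{itemize}
\item Writing $\lambda$ as a polynomial times a unit only holds in the Banach algebras $D_\nu$, and $M^{\Psi=0}$ is not a $D_\nu$-module. So injectivity of each $Z-z$ gives nothing for $\lambda$ with infinitely many zeros.
\item For $K\neq\QQ_p$ the Fourier coordinate $Z$ is not $\delta_\gamma-1$. For any $\gamma\in\Gamma_n$, the function $\delta_\gamma-1$ vanishes at all finite-order characters trivial on $\gamma^{\ZZ_p}$. There are infinitely many of these, accumulating at the boundary, so $\delta_\gamma-1$ is not a unit of $\cR(\Gamma_n)$. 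Thus $\gamma-1$ is injective but \emph{not} invertible on $M^{\Psi=0}$; only $Z$ and its translates $Z-z$ are units there.
\item $M^{\Psi=0}$ is not free over $D$, only torsion-free.
\item For $F=K$ no Hahn--Banach descent is needed: $\mathcal{C}(M)\subseteq\mathcal{C}(M_{\CC_p})$ and $D^K(\Gamma_n,K)\subseteq D^K(\Gamma_n,\CC_p)$, which is how the paper argues.
\end{itemize}

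For finite dimensionality of $M^{\varphi_K=1}$, the paper cites \cite[Remark 3.8]{steingart2024iwasawa} over $\CC_p$. It then descends to $K$ via the injection $\CC_p\otimes_K M^{\varphi_K=1}\hookrightarrow M_{\CC_p}^{\varphi_K=1}$, which comes from $\CC_p\otimes_K\cR_K\hookrightarrow\cR_{\CC_p}$. Your compact-operator idea gives a genuinely independent proof, valid for any $F$, but as written it is not yet an argument. Membership in $M^{[r^{1/q^k},1)}$ is mere restriction, and $\varphi_K$ itself is not compact. One way to make it precise:
\begin{itemize}
\item Using $m=\Psi(m)$, all $\varphi_K$-invariants lie in a fixed $M^{[r,1)}$.
\item Restriction to $I_0=[r,r^{1/q}]$ is injective on them, because $m|_{I_k}=\varphi_K^k(m|_{I_0})$. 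Its image lies in the closed subspace $V\subseteq M^{I_0}$ of those $x$ that agree with $\varphi_K x$ on $\abs{T}=r^{1/q}$.
\item For $x\in V$, glue $x,\varphi_K x,\varphi_K^2x$ to an element of $M^{[r,r^{1/q^3}]}$. Restrict it to the interior annulus $[r^{1/q},r^{1/q^2}]$, which is a compact map, and apply $\Psi$; this returns $x$.
\item Hence the inclusion $V\hookrightarrow M^{I_0}$ is compact, the unit ball of $V$ is compactoid, and $\dim V<\infty$.
\end{itemize}
Your alternative via \cite[Corollary 2.14]{berger2021triangulable} does not apply. $M^{\varphi_K=1}$ is not contained in $H^0(G_{K'},W_e)$ for any finite $K'/K$, only in $W_e^{H_K}$, which is infinite-dimensional; and over $\cR_{\CC_p}$ there is no attached $B$-pair. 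The bound $\dim\le\operatorname{rank}(M)$ is not needed.
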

\begin{proof}
Exactness is clear by definition.
Consider first the case $F=\CC_p$.
The heart is torsion free since it is a submodule of 
$M^{\Psi=0}$ which is projective as a module over the Robba ring of $\Gamma_n$ \cite[Theorem 2.4.5.]{STEINGART202424}, in particular torsion free as a module over the subring $D.$ 
A torsion free module over a Prüfer domain is flat. Finite dimensionality of $M^{\varphi_K=1}$ holds by \cite[Remark 3.8]{steingart2024iwasawa}.
Now consider the case $F=K$. The heart of $M$ is a submodule of the heart of $M_{\CC_p}=\CC_p\widehat{\otimes}_KM$ and the latter is torsion free over $D^K(\Gamma_n,K)\subset D^K(\Gamma_n,\CC_p)$,
hence the former is torsion free as well. For the finite dimensionality of $M^{\varphi_K=1}$ consider the injective map \[
\CC_p\otimes_K \cR_K \to \cR_{\CC_p}.
\]
Since $M$ is finite free over $\cR_K$, there is also an injective map $\CC_p\otimes_K M \to M_{\CC_p}$. Tensoring the inclusion $M^{\varphi_K=1} \subset M$ with $\CC_p$ over $K$ 
gives an injective map $\CC_p\otimes_K M^{\varphi_K=1} \to M_{\CC_p}$, which factors through $M_{\CC_p}^{\varphi_K=1}$ and finite dimensionality of 
$M^{\varphi_K=1}$ follows from finite dimensionality of $M_{\CC_p}^{\varphi_K=1}$.
\end{proof}
\begin{lem} \label{lem:finitedimcoad}
Let $V$ be a finite dimensional $F$-vector space equipped with a continuous (with respect to the Hausdorff topology) action of $D,$ then $V$ is finitely presented, in particular, coadmissible.
\end{lem}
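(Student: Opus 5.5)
Since $D=\mathcal{D}^K(\Gamma_n,F)$ is isomorphic, after choosing a coordinate, to a ring of rigid analytic functions on a one-dimensional open disc (over $\CC_p$; over $K$ it is a twisted form of such a disc, but still a Prüfer domain with $D=\varprojlim D_\nu$ and each $D_\nu$ a Dedekind domain), the plan is to show that $V$ is a torsion $D$-module annihilated by a nonzero element $f\in D$ which is in fact a polynomial. Then $V$ becomes a finitely generated module over a quotient $D/fD$ that is finite-dimensional, which yields a finite presentation.

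First, the action of $D$ on $V$ is by continuous $F$-linear endomorphisms, so we obtain an $F$-algebra map $\alpha\colon D\to \operatorname{End}_F(V)$, continuous for the Fréchet topology on $D$ and the (unique Hausdorff) topology on $V$. The key element is $Z$ (or $\gamma-1$ for a topological generator-type element; over $K$ one uses the element $Z$ of the Lubin--Tate/Fourier coordinate, or directly the polynomial subalgebra generated by the Dirac distributions $\delta_\gamma$). Second, by Cayley--Hamilton applied to $\alpha(Z)$ there is a monic polynomial $P$ with $P(\alpha(Z))=0$. Hence the closed ideal $\ker\alpha$ contains $P(Z)$. Third, one shows $D/P(Z)D$ is finite-dimensional over $F$: over $\CC_p$ this is Weierstrass division in each $D_\nu$ (the zeros of $P$ inside the open disc are finitely many), so $D/P(Z)D\cong \prod_{\text{roots in disc}} F[Z]/(Z-a)^{m_a}$, and $P(Z)$ is a finite product of elements $Z-a$, which are units if $|a|\geq 1$. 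Over $K$ one can either do the same Weierstrass argument in the twisted form, or use Lemma \ref{lem:split-descend}-style base change: $\CC_p\hat\otimes_K (D/P(Z)D)=D_{\CC_p}/P(Z)$, which is finite-dimensional, forcing $D/P(Z)D$ to be finite-dimensional.

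Finally, $V$ is a finitely generated module over the finite-dimensional algebra $D/P(Z)D$ (any $F$-basis generates). Taking generators $v_1,\dots,v_d$, the kernel of $D^d\to V$ contains $P(Z)D^d$ and its image in $(D/P(Z))^d$ is a finite-dimensional subspace, hence finitely generated. Thus the kernel is generated by $P(Z)e_i$ together with finitely many lifts, giving a finite presentation. Coadmissibility then follows because finitely presented modules over a coherent Fréchet--Stein algebra (in particular over the Prüfer domain $D$) are coadmissible, by Schneider--Teitelbaum.

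The main obstacle is the third step over $F=K$, namely making precise which element plays the role of $Z$ and ensuring that $D/P(Z)D$ is finite-dimensional when $D$ is only a twisted disc. I would first try to avoid coordinates entirely: use the Dirac element $\delta_\gamma-1$ for $\gamma\in\Gamma_n$ a generator as an $o_K$-module direction, and reduce the finite-dimensionality of $D/(\ldots)$ to the $\CC_p$ case via completed base change and Lemma \ref{lem:split-descend}.
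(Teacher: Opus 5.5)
Over $F=\CC_p$ your argument is correct and takes a different route from the paper. Cayley--Hamilton for the Fourier coordinate $Z$ puts $P(Z)$ in the annihilator, $D/P(Z)D\cong\prod_{|a|<1}\CC_p[Z]/(Z-a)^{m_a}$ is finite-dimensional, and finite presentation is then formal; this even shows that continuity of the action is automatic. The paper uses continuity only to see that $\ker(D^d\to V)$ is closed. It then applies Berger's Lemma 1.9 (a closed submodule of a finitely generated projective $D$-module is finitely generated projective). That needs no coordinate and works verbatim for both $F=\CC_p$ and $F=K$.

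The gap is the case $F=K$. The lemma covers it, and it is used later for $M^{\varphi_K=1}$ over $\cR_K$ in Corollary~\ref{cor:perfectpsi}. Your mechanism needs an element of $D^K(\Gamma_n,K)$ whose characteristic-polynomial expressions have finite-dimensional quotients, and you never produce one.

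\begin{itemize}
\item The coordinate $Z$ exists only in $D^K(\Gamma_n,\CC_p)$; as you note, over $K$ the character variety is only a twisted form of the disc. So the identity $\CC_p\hat\otimes_K(D/P(Z)D)=D_{\CC_p}/P(Z)$ presupposes $P(Z)\in D$, which is not available.
\item Your fallback $\delta_\gamma-1$ fails as soon as $K\neq\QQ_p$. First, $\Gamma_n\cong o_K\cong\ZZ_p^{[K:\QQ_p]}$ is not topologically cyclic. Second, let $\chi_0$ be a character of $\Gamma_n$ occurring in $\overline{K}\otimes_K V$ and $P$ the characteristic polynomial of $\delta_\gamma$ on $V$. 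Then $P(\delta_\gamma)$ vanishes at every $\chi_0\chi$ with $\chi$ of finite order and trivial on $\ZZ_p\gamma$.
\item There are infinitely many such $\chi$: they are characters of $o_K/\ZZ_p\gamma$, which has $\ZZ_p$-rank $[K:\QQ_p]-1\geq 1$. Each is locally constant, hence $K$-analytic. So $D/P(\delta_\gamma)D$ has infinitely many maximal ideals and is infinite-dimensional.
\end{itemize}

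To repair your route, take a $\ZZ_p$-basis $\gamma_1,\dots,\gamma_d$ of $\Gamma_n$ and set $J=(P_1(\delta_{\gamma_1}),\dots,P_d(\delta_{\gamma_d}))\subseteq\operatorname{Ann}_D(V)$. This ideal has only finitely many zeros, since a character is determined by its values on the $\gamma_i$. You would then still have to prove $\dim_K D/J<\infty$, for instance via Theorem B on the quasi-Stein character variety. Only after that does your Artinian-quotient argument go through. That finite-codimension statement is exactly the input missing from the proposal, and it is what the paper avoids by using closedness of the kernel.
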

\begin{proof}
Take any $D$-linear surjection $D^d \to V.$ Since $V$ is Hausdorff the kernel is closed and by \cite[Lemma 1.9]{Berger} a closed submodule of a finitely generated projective $D$-module is finitely generated projective. Finitely presented $D$-modules are coadmissible by \cite[Corollary 3.4]{schneider2003algebras}.
\end{proof}
\begin{lem} \label{lem:finitepres} Let $M$ be a $K$-analytic $(\varphi_K,\Gamma_K)$-module over $\cR_{\CC_p}$. The heart
$\mathcal{C}(M)$ is finitely generated projective as a module over $D$ and $M^{\Psi=1}$ is finitely presented of the same rank as $\mathcal{C}(M).$
\end{lem}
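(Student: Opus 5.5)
We work over $F=\CC_p$ with $D=\mathcal{D}^K(\Gamma_n,\CC_p)$, which is isomorphic to the ring of rigid functions on the open unit disc. Our first goal is to show that the heart $\mathcal{C}(M)$ is coadmissible. By Theorem~\ref{thm:psicoadmissible}, $M^{\Psi=1}$ is coadmissible. By the preceding lemma, $M^{\varphi_K=1}$ is finite dimensional, and it carries a continuous action of $D$ (it is a closed subspace of $M$, stable under $\Gamma_n$). Hence it is finitely presented and coadmissible by Lemma~\ref{lem:finitedimcoad}. The category of coadmissible modules is abelian, so the cokernel $\mathcal{C}(M)$ of the injection $M^{\varphi_K=1}\to M^{\Psi=1}$ is coadmissible as well.

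Next we combine coadmissibility with flatness. The preceding lemma shows that $\mathcal{C}(M)$ is torsion free over the Prüfer domain $D$. For coadmissible modules over $D=\varprojlim D_\nu$ (each $D_\nu$ an affinoid disc algebra, hence a PID), I would invoke the standard structure theory: a coadmissible module $X$ has finitely generated components $X_\nu=D_\nu\otimes_D X$. If $X$ is torsion free, then each $X_\nu$ is torsion free over the PID $D_\nu$. One can deduce this from flatness, since $X_\nu=D_\nu\otimes_D X$ and $D\to D_\nu$ is flat, so torsion-freeness passes along. Therefore $X_\nu$ is free, of some rank $r_\nu$. Compatibility $D_\nu\otimes_{D_{\nu+1}}X_{\nu+1}\cong X_\nu$ forces $r_\nu$ to be constant, equal to $r$. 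A torsion-free coadmissible module over $\mathcal{O}(\text{open disc})$ with constant finite rank is then finitely generated projective of rank $r$. I would justify this last step either via \cite[Lemma 1.9]{Berger} (closed submodules of finitely generated free modules are finitely generated projective, combined with the fact that coadmissible modules of bounded rank are finitely generated over such one-dimensional Fréchet--Stein algebras, cf. \cite{schneider2003algebras}), or directly by the Lazard/Gruson type result that coadmissible modules with uniformly bounded numbers of generators of $X_\nu$ are finitely generated.

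For $M^{\Psi=1}$, we use the short exact sequence $0\to M^{\varphi_K=1}\to M^{\Psi=1}\to\mathcal{C}(M)\to 0$. Since $\mathcal{C}(M)$ is projective, the sequence splits, so $M^{\Psi=1}\cong M^{\varphi_K=1}\oplus\mathcal{C}(M)$. This is finitely presented because both summands are (Lemma~\ref{lem:finitedimcoad}). Its rank, i.e. its dimension at the generic point (equivalently at all but finitely many points of the disc), equals the rank of $\mathcal{C}(M)$, since the finite-dimensional summand is torsion.

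The main obstacle is the step from ``coadmissible and torsion free'' to ``finitely generated''. Over a Prüfer Fréchet--Stein algebra, torsion free coadmissible modules can have infinite rank, e.g. products. So the real input is that the ranks $r_\nu$ are finite and independent of $\nu$. Finiteness comes from each $X_\nu$ being finitely generated, and constancy from the compatibility isomorphisms. The remaining point is that a compatible system of free $D_\nu$-modules of fixed rank $r$ glues to a finitely generated projective $D$-module. I would argue this by choosing generators at a large level, approximating them by elements of $X$ using the density of $X\to X_\nu$, and using that the open disc algebra has trivial Picard group and projective limits of free modules of constant rank are finite projective.
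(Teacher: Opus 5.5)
Your outline matches the paper's proof. You get coadmissibility of $\mathcal{C}(M)$ from Theorem~\ref{thm:psicoadmissible} and Lemma~\ref{lem:finitedimcoad}, and show that the components $D_\nu\otimes_D\mathcal{C}(M)$ are free of one rank $r$. You then need finite generation from a uniform bound on generators, projectivity since finitely generated torsion-free modules over the Prüfer domain $D$ are projective, and finite presentation of $M^{\Psi=1}$. Your splitting argument is a harmless substitute for the paper's appeal to extensions of finitely presented modules.

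Everything rests on the finite generation step, and of your three justifications only the second works. It is exactly what the paper takes from the proof of \cite[Proposition 2.1.13]{KPX}: a coadmissible module over the one-dimensional quasi-Stein algebra $D$ whose components $X_\nu$ need at most $m$ generators is generated by $2m$ global sections (the paper uses $m=r+1$). Commit to that and drop the alternatives, which fail as stated.

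Your first alternative is circular. \cite{schneider2003algebras} contains no result that coadmissible modules of bounded rank are finitely generated, and \cite[Lemma 1.9]{Berger} only applies once $X$ is known to be a closed submodule of a finitely generated free module.

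The sketch in your last paragraph fails in two places. First, approximating a basis of $X_\nu$ by global elements, as in the proof of Theorem~\ref{thm:galoisdescentadmissible}, only yields elements generating $X_\mu$ for $\mu\le\nu$. Already for $X=D$, the polynomials $1-(Z/a)^k$ with $\rho_\nu<\abs{a}<1$ (where $\rho_\nu$ is the radius of the $\nu$-th disc) approximate the generator $1$ arbitrarily well on that disc but vanish further out. So there is no control at higher levels; the choices must be corrected inductively along all levels, which is what the argument behind \cite[Proposition 2.1.13]{KPX} does, producing $2m$ rather than $m$ generators. Second, the open disc algebra over $\CC_p$ does not have trivial Picard group. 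Since $\CC_p$ is not spherically complete, Lazard's theorem gives divisors on the open disc that are not divisors of functions, and the corresponding ideals are invertible but not principal. So you cannot expect $\mathcal{C}(M)$ to be free over $\CC_p$, and indeed the lemma only asserts projectivity. Also, ``a compatible system of free modules of constant rank glues to a finite projective module'' is precisely what must be proved, not an input.

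Two smaller corrections. $X_\nu$ is torsion free because $X$ is flat over $D$ and flatness is stable under base change; flatness of $D\to D_\nu$ is irrelevant here. And torsion-free coadmissible modules cannot have infinite rank: your own argument shows this, and $\prod_{\NN}D$ is not coadmissible.
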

\begin{proof}
By Theorem \ref{thm:psicoadmissible} and Lemma \ref{lem:finitedimcoad} we have that $\mathcal{C}(M)$ is coadmissible. 
For every $n$ the module  $D_n \otimes_D \mathcal{C}(M)$ is finitely generated and flat and since $D_n$ is a Dedekind domain (even a PID), 
we conclude that it is finite projective and because $$D_{n}\otimes_{D_{n+1}}(D_{n+1} \otimes_D\mathcal{C}(M)) = D_n \otimes_D \mathcal{C}(M)$$ they all have the same rank which we denote by $r.$
For a finite projective module $P$ over a Dedekind domain $R$ we have that $P\cong R^{r-1}\oplus I$ for some invertible ideal $I\subset R$. In particular, $P$ is generated by at most 
$r+1$ elements. Hence, $D_n \otimes_D \mathcal{C}(M)$ is generated by at most $r+1$ elements for all $n$.
The proof of \cite[Proposition 2.1.13.]{KPX} shows that $\mathcal{C}(M)$ can be generated by $2(r+1)$ global sections. Hence $\mathcal{C}(M)$ is a finitely generated flat module over the Prüfer domain $D$ which implies it is finitely generated projective hence finitely presented. 
$M^{\Psi=1}$ is thus an extension of finitely presented modules and by \cite[\href{https://stacks.math.columbia.edu/tag/0519}{Tag 0519}]{stacks-project} finitely presented. 
The statement about the rank follows from $M^{\varphi_K=1}$ being finite dimensional and hence torsion. 
\end{proof}

We can now descend coadmissibility and finiteness of $M^{\Psi=1}$ from $\CC_p$ to $K$.
Recall from the proof of \cite[Propostion~A.2]{SchneiderVenjakobRegulatorBook} that $M^r/(\Psi-1)$ is finite-dimensional for $r\gg 0$, hence $\Psi-1\colon M^r\to M^r$ is a strict 
map of Fréchet spaces by \cite[Lemma 1.11]{steingart2024iwasawa} and by Lemma~1.12 in loc.~cit. we have 
\[
(M^r)^{\Psi=1}\widehat{\otimes}_K\CC_p \cong (M^r_{\CC_p})^{\Psi=1}.
\]

\begin{cor} \label{cor:perfectpsi}
Let $M$ be a $K$-analytic $(\varphi,\Gamma_K)$-module over $\cR_K$. Then $M^{\Psi=1}$ is a coadmissible $D^K(\Gamma_n,K)$-module. 
The heart $\mathcal{C}(M)$ is finitely generated projective as a module over $D^K(\Gamma_n,K)$ and $M^{\Psi=1}$ is finitely presented of the same rank as $\mathcal{C}(M).$
\end{cor}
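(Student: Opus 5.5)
The plan is to apply the descent theorem, Theorem \ref{thm:galoisdescentadmissible}, with $F=\CC_p$, $A=D^K(\Gamma_n,K)=\varprojlim A_m$ and $B=D^K(\Gamma_n,\CC_p)$. First I reduce to a model. Choose $r\gg 0$ such that $(M^r)^{\Psi=1}=M^{\Psi=1}$; this is possible by Lemma \ref{lem:fixed-vectors}, applied to the direct system of models with $\rho=\Psi$. The same choice gives $(M^r_{\CC_p})^{\Psi=1}=M_{\CC_p}^{\Psi=1}$. Put $N:=(M^r)^{\Psi=1}$. It is a closed subspace of the Fréchet space $M^r$, hence Fréchet, and the $K$-analytic $\Gamma_n$-action extends to a continuous action of $A$. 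So hypothesis (1) of Theorem \ref{thm:galoisdescentadmissible} holds.

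For hypothesis (2), the isomorphism recalled just before the statement gives
\[
P:=\CC_p\widehat{\otimes}_K N\cong (M^r_{\CC_p})^{\Psi=1}=M_{\CC_p}^{\Psi=1}.
\]
This is coadmissible by Theorem \ref{thm:psicoadmissible} and finitely presented (in particular finitely generated) by Lemma \ref{lem:finitepres}. The action of $B$ on $M_{\CC_p}^{\Psi=1}$ is separately continuous, so Lemma \ref{lem:coadm-top-given-top} shows that the coadmissible topology agrees with the Fréchet topology. I also need the identification $P\cong M^{\Psi=1}_{\CC_p}$ to be a topological isomorphism. I expect this to come out of the strictness statement in \cite[Lemma 1.11, 1.12]{steingart2024iwasawa}. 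Theorem \ref{thm:galoisdescentadmissible} then gives that $N=M^{\Psi=1}$ is coadmissible over $A$.

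For finiteness I would like to descend finite generation directly. As a first attempt: choose generators $y_i$ of $P$ and approximate them by elements of $\CC_p\otimes_K N$, exactly as in the proof of Theorem \ref{thm:galoisdescentadmissible}. The difficulty is that $B$ is only Fréchet, so the geometric series trick is not available globally. I therefore argue through the heart instead. The sequence $0\to M^{\varphi_K=1}\to M^{\Psi=1}\to\mathcal{C}(M)\to 0$ consists of coadmissible modules, because $M^{\varphi_K=1}$ is finite dimensional and hence coadmissible by Lemma \ref{lem:finitedimcoad}, and coadmissible modules are closed under cokernels. The heart is torsion free over the Prüfer domain $A$, hence flat. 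Each $A_m\otimes_A\mathcal{C}(M)$ is then finitely generated and flat over the Dedekind domain $A_m$, hence finite projective. These ranks agree across $m$ by the transition isomorphisms. The argument of \cite[Proposition 2.1.13]{KPX}, copied from the proof of Lemma \ref{lem:finitepres}, gives a uniform bound on the number of generators and therefore global generation of $\mathcal{C}(M)$ by finitely many elements. A finitely generated flat module over a Prüfer domain is finitely generated projective, hence finitely presented. Consequently $M^{\Psi=1}$ is finitely presented as an extension of finitely presented modules by \cite[\href{https://stacks.math.columbia.edu/tag/0519}{Tag 0519}]{stacks-project}. Its rank equals that of $\mathcal{C}(M)$ because $M^{\varphi_K=1}$ is torsion.

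This requires $A=D^K(\Gamma_n,K)$ itself to be a Prüfer domain written as a limit of Dedekind domains $A_m$, as asserted at the start of the subsection for $F=K$. I also need that $B_m=\CC_p\widehat{\otimes}_K A_m$ recovers the $\CC_p$-Fréchet–Stein presentation used in Theorem \ref{thm:psicoadmissible}. The main obstacle I expect is the topological compatibility: that the $\Psi=1$ eigenspace commutes with completed base change as topological spaces, and that the Fréchet–Stein structures over $K$ and $\CC_p$ are compatible in this way. For the first point I would rely on the strictness of $\Psi-1$ on $M^r$, since its cokernel is finite dimensional.
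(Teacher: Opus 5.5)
Your proposal is correct and follows the paper's proof essentially step for step. You give $M^{\Psi=1}$ the Fréchet topology of a model $M^r$, and you use the base-change identification $(M^r)^{\Psi=1}\widehat{\otimes}_K\CC_p\cong (M^r_{\CC_p})^{\Psi=1}$ together with Theorem \ref{thm:psicoadmissible} and Lemmas \ref{lem:finitepres} and \ref{lem:coadm-top-given-top} to verify the hypotheses of Theorem \ref{thm:galoisdescentadmissible}; you then rerun the heart argument of Lemma \ref{lem:finitepres} over $D^K(\Gamma_n,K)$, and the compatibility points you flag (Prüfer/Dedekind structure over $K$, $B_m=\CC_p\widehat{\otimes}_K A_m$, and the base-change isomorphism being topological and $B$-linear) are taken for granted in the paper as well.
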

\begin{proof}
We equip $X:=M^{\Psi=1}$ with the Fréchet topology of some $M^r$ for $r$ close enough to $1.$ 
By Lemma \ref{lem:coadm-top-given-top} together with Lemma \ref{lem:finitepres} the base change $(M^r_{\CC_p})^{\Psi=1}$ carries its coadmissible topology.
In particular, $X$ satisfies the hypothesis of Theorem \ref{thm:galoisdescentadmissible} and is thus coadmissible. The remaining 
statements can now be proven by the same argument as in Lemma \ref{lem:finitepres}.
\end{proof}

\begin{lem} \label{lem:anbasechange}
Let $M$ be a $K$-analytic $(\varphi_K,\Gamma_K)$-module over $\cR_{K}$ and $M_{\CC_p} = \CC_p\widehat{\otimes}_KM$ then 
$H^i_{an}(M_{\CC_p}) = \CC_p \otimes_K H^i_{an}(M),$ where on the right hand side we use $\Psi$ to define the analytic cohomology. 
\end{lem}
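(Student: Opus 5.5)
Our plan is to compute the analytic cohomology of $M$ by an explicit small complex and to move that complex along $\CC_p\widehat{\otimes}_K-$ using the strictness facts recalled just before Corollary \ref{cor:perfectpsi}. Write $D_K:=\mathcal{D}^K(\Gamma_n,K)$, $D_{\CC_p}:=\mathcal{D}^K(\Gamma_n,\CC_p)$, and recall that $D_K$ is isomorphic to functions on an open disc in a variable $Z$, so $[D_K\xrightarrow{Z}D_K]$ resolves $K$. Consequently $C_{an}(\Gamma_n,M)$ is the total complex of
$$\bigl[M\xrightarrow{Z}M\bigr]\xrightarrow{\Psi-1}\bigl[M\xrightarrow{Z}M\bigr],$$
and we may take the differentials in the other order. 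First take $\Psi-1$-cohomology. This gives $K\otimes^{\mathbb{L}}_{D_K}[M\xrightarrow{\Psi-1}M]$, computed as the cone of $Z$ acting on $M^{\Psi=1}$ and on $M/(\Psi-1)$. The same description holds over $\CC_p$ with $D_{\CC_p}$ in place of $D_K$.

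\textbf{Step 1: compare the $\Psi$-invariants.} Fix $r\gg0$. By the facts quoted from \cite{SchneiderVenjakobRegulatorBook} and \cite{steingart2024iwasawa}, $(M^r)^{\Psi=1}\widehat{\otimes}_K\CC_p\cong(M^r_{\CC_p})^{\Psi=1}$. Moreover $M^r/(\Psi-1)$ is finite dimensional, and by the strictness it base changes to $M^r_{\CC_p}/(\Psi-1)$. Passing from $M^r$ to $M$ via Lemma \ref{lem:fixed-vectors} (the $\Psi$-fixed vectors and, similarly, the cokernels stabilise in $r$) gives
$$M_{\CC_p}^{\Psi=1}\cong\CC_p\widehat{\otimes}_KM^{\Psi=1},\qquad M_{\CC_p}/(\Psi-1)\cong\CC_p\otimes_KM/(\Psi-1).$$

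\textbf{Step 2: compare the $\Gamma$-part.} By Corollary \ref{cor:perfectpsi}, $X:=M^{\Psi=1}$ is finitely presented over $D_K$. Hence there is an exact sequence $D_K^a\to D_K^b\to X\to0$ with strict maps, by the open mapping theorem (Lemma \ref{lem:coadm-top-given-top}). Completed base change is exact on strict sequences of Fréchet spaces, and $\CC_p\widehat{\otimes}_KD_K=D_{\CC_p}$, so
$$\CC_p\widehat{\otimes}_KX\cong D_{\CC_p}\otimes_{D_K}X.$$
Since $K\to\CC_p$ is flat, the cohomology of the cone of $Z$ on $X$, namely $X/ZX$ and $X[Z]$, satisfies
$$\CC_p\otimes_K\bigl(K\otimes^{\mathbb{L}}_{D_K}X\bigr)\cong\CC_p\otimes^{\mathbb{L}}_{D_{\CC_p}}\bigl(D_{\CC_p}\otimes_{D_K}X\bigr).$$
To justify this we use $D_{\CC_p}\otimes^{\mathbb{L}}_{D_K}K\cong\CC_p$, which follows from the resolution by $Z$. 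We use it together with flatness of $D_{\CC_p}$ over $D_K$ on finitely presented modules; alternatively, we resolve $X$ by finite projective $D_K$-modules, which is possible since $D_K$ is Prüfer. The finite-dimensional module $M/(\Psi-1)$ is handled the same way: $Z$ acts on a finite-dimensional space, and kernel and cokernel commute with $\CC_p\otimes_K-$.

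\textbf{Step 3: assemble.} The two-row spectral sequence, equivalently the long exact sequence of the double complex, compares the cohomology over $K$ and over $\CC_p$. The comparison maps are induced by the natural map $C_{an}(M)\to C_{an}(M_{\CC_p})$ and are isomorphisms on all the pieces above. The five lemma then gives $H^i_{an}(M_{\CC_p})\cong\CC_p\otimes_KH^i_{an}(M)$.

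The main obstacle is Step 1: making rigorous that $(\Psi-1)$-invariants and cokernels commute with the completed base change when passing from the Fréchet spaces $M^r$ to the LF-space $M$. The cokernel case is the delicate one, since non-strictness would break exactness. We would handle it using the finite dimensionality of $M^r/(\Psi-1)$, which forces strictness via \cite[Lemma 1.11]{steingart2024iwasawa}, together with the stabilisation argument of Lemma \ref{lem:fixed-vectors}.
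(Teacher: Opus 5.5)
\textbf{The gap is in your opening sentence.} You claim that $D_K=\mathcal{D}^K(\Gamma_n,K)$ is the ring of functions on an open disc, with a coordinate $Z$ generating the augmentation ideal. This is false as soon as $K\neq\QQ_p$.

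The Schneider--Teitelbaum isomorphism with $\mathcal{O}(\mathbf{B})$ involves the transcendental period $\Omega$ and exists only over $\CC_p$. That is the only form in which the paper uses it (Proposition~\ref{prop:DFMconsistent}, Lemma~\ref{lem:firstEPCformula}). Over $K$ the character variety is a non-trivial twisted form of the disc. Moreover, the augmentation ideal $I_K\subset D_K$ is in general not even principal. A generator would have a single simple zero, so over $\CC_p$ it would be a constant multiple of an automorphism of the disc. Comparing valuations of Galois-conjugate torsion characters rules this out already for $K/\QQ_p$ unramified quadratic.

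Consequently none of the following exist over $K$: the complex $[M\xrightarrow{Z}M]\xrightarrow{\Psi-1}[M\xrightarrow{Z}M]$, the groups $X/ZX$ and $X[Z]$, and the action of $Z$ on $M/(\Psi-1)$ in Step~2. Your argument is fine as written only for $K=\QQ_p$.

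This is exactly why the paper argues differently. The kernel of $D_K\to\mathcal{D}_\infty$ \emph{is} principal, generated by $\nabla$, and $K$ is a projective $\mathcal{D}_\infty$-module. Hence, by Kohlhaase, $H^i_{an}=H^0(\Gamma_n,H^i_{\Psi,\nabla})$ with exact $\Gamma_n$-invariants. The paper then compares the rows $0\to M^{\Psi=1}/\nabla\to H^1_{\Psi,\nabla}\to[M/(\Psi-1)]^{\nabla=0}\to0$ over $K$ and over $\CC_p$. For this it uses strictness of $\nabla$ and of $x\mapsto((\gamma_i-1)x)_i$ on coadmissible modules, together with Lemma~1.12 of \cite{steingart2024iwasawa}.

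\textbf{How to repair your route without $Z$.} Since $D_K$ is Pr\"ufer and $I_K$ is generated by the finitely many $\gamma_i-1$, the ideal $I_K$ is invertible, hence projective. So $0\to I_K\to D_K\to K\to0$ is a projective resolution. The map $D_{\CC_p}\otimes_{D_K}I_K\to D_{\CC_p}$ is injective, being a non-zero map from a rank-one projective module into a domain. Its image is the ideal generated by the $\gamma_i-1$, which is $I_{\CC_p}$. Hence $D_{\CC_p}\otimes^{\mathbb{L}}_{D_K}K\simeq\CC_p$.

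Now $C_{an}(M)\simeq\Hom_{D_K}([I_K\to D_K],C_\Psi(M))$, and the two-column spectral sequence gives short exact sequences $0\to\operatorname{Ext}^1_{D_K}(K,H^{i-1}_\Psi)\to H^i_{an}(M)\to\Hom_{D_K}(K,H^i_\Psi)\to0$. Here $H^0_\Psi=M^{\Psi=1}$ and $H^1_\Psi=M/(\Psi-1)$, and the same holds over $\CC_p$. Your Steps~1--3 then go through with $\operatorname{Ext}^p_{D_K}(K,-)$ in place of $\ker Z$ and $\operatorname{coker}Z$:
\begin{itemize}
\item Finite presentation of $M^{\Psi=1}$ (Corollary~\ref{cor:perfectpsi}) gives a two-term resolution by finitely generated projectives.
\item Injective maps of finitely generated projectives stay injective along the extension of domains $D_K\subset D_{\CC_p}$. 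Hence $D_{\CC_p}\otimes^{\mathbb{L}}_{D_K}M^{\Psi=1}\simeq M^{\Psi=1}_{\CC_p}$.
\item $\operatorname{Ext}$ out of the perfect module $K$ commutes with this base change.
\end{itemize}

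Repaired this way, your route makes the $\Gamma$-part purely algebraic, but it relies on finite presentation of $M^{\Psi=1}$. The paper needs only coadmissibility and strictness.

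A minor point: Lemma~\ref{lem:fixed-vectors} is about fixed vectors. For the cokernels of $\Psi-1$ you need the equally formal analogue: $\iota$ and $\rho$ induce mutually inverse maps on $M^{(j)}/(\Psi-1)$, because $\Psi$ acts there as the identity.
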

\begin{proof}
We recall that for $X =M,M_{\CC_p}$ the cohomology $H^i_{an}(X)$ can be computed as $H^0(\Gamma_n,H^i_{\Psi,\nabla}(X)).$ From the double complex spectral sequence for the corresponding Koszul complexes we obtain a commutative diagram with exact rows  
{\footnotesize \[\begin{tikzcd}
	0 & {\mathbb{C}_p \otimes_{K}M^{\Psi=1}/\nabla} & {\mathbb{C}_p \otimes_{K}H^1_{\Psi,\nabla}(M)} & {\mathbb{C}_p\otimes_K [M/(\Psi-1)]^{\nabla=0}} & 0 \\
	0 & {M_{\mathbb{C}_p}^{\Psi=1}/\nabla} & {H^1_{\Psi,\nabla}(M_{\mathbb{C}_p})} & {[M_{\mathbb{C}_p}/(\Psi-1)]^{\nabla=0}} & 0
	\arrow[from=1-1, to=1-2]
	\arrow[from=1-2, to=1-3]
	\arrow[from=1-2, to=2-2]
	\arrow[from=1-3, to=1-4]
	\arrow[from=1-3, to=2-3]
	\arrow[from=1-4, to=1-5]
	\arrow[from=1-4, to=2-4]
	\arrow[from=2-1, to=2-2]
	\arrow[from=2-2, to=2-3]
	\arrow[from=2-3, to=2-4]
	\arrow[from=2-4, to=2-5]
\end{tikzcd}\]
}
Because the action of $\nabla$ is trivial and $K$ is a projective $D/\nabla$-module, applying $H^0(\Gamma_n,-)$ to the diagram preserves the exactness of the rows. 
By \cite[Lemma 1.12]{steingart2024iwasawa} short exact sequences of strict maps of Fréchet spaces remain exact after completed base change with a Banach space of countable type (note that $\mathbb{C}_p$ contains a $\overline{K}$ as a countable dimensional dense subspace and is hence of countable type over $K.$).
The right-most vertical arrow is an isomorphism because $\Psi-1$ is a strict map and $M/(\Psi-1)$ is finite dimensional and hence already complete. 
For the left most map we make use of the coadmissibility of $M^{\Psi=1}.$ Because $\Gamma_n$ is commutative, the map $\nabla\colon M^{\Psi=1} \to M^{\Psi=1}$ is a $D$-linear map between coadmissible modules and hence strict. 
We hence have $\CC_p\widehat{\otimes}_K(M^{\Psi=1}/\nabla) \cong (M_{\CC_p}^{\Psi=1})/\nabla.$ The $\Gamma_n$-invariants of $X \in \{M^{\Psi=1}/\nabla,M_{\CC_p}^{\Psi=1}/\nabla\}$  can be expressed as the kernel of $$X \mapsto \prod_{i=1}^d X$$ where the map is given by $x \to (\gamma_i-1)x$ and $(\gamma_i)_i$ are topological generators of $\Gamma_n.$ Again, by the commutativity of $D$, this map is a $D$-linear map between coadmissible modules and hence strict. We can again apply \cite[Lemma 1.12]{steingart2024iwasawa}   and use finite dimensionality of $H^0(\Gamma_n,X)$ to obtain $$\CC_p \otimes_K H^0(\Gamma_n,M^{\Psi=1}/\nabla)\cong H^0(\Gamma_n,M_{\CC_p}^{\Psi=1}/\nabla).$$ The claim now follows from the Five Lemma. 

\end{proof}
\begin{lem} \label{lem:firstEPCformula}
Let $M$ be a $K$-analytic $(\varphi_K,\Gamma_K)$-module over $\cR_{\mathbb{C}_p}$  and let $\chi_{an}(\Gamma_n,M)$ be the Euler--Poincaré characteristic of $H^\bullet_{an}(\Gamma_n,M),$ then 
$$\operatorname{rank}_D(\mathcal{C}(M)) = -\chi_{an}(\Gamma_n,M).$$
\end{lem}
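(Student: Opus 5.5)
We compute each $H^i_{an}(\Gamma_n,M)$ in terms of $M^{\Psi=1}$ and $M/(\Psi-1)$ and then use that $\mathcal{C}(M)$ is finitely generated projective over $D$ (Lemma \ref{lem:finitepres}). The analytic cohomology is the $\Gamma_n$-invariants of the Koszul complex $M\xrightarrow{(\Psi-1,\nabla)}M^2\to M$. As in the proof of Lemma \ref{lem:anbasechange}, the double complex spectral sequence gives
\[ 0\to M^{\Psi=1}/\nabla \to H^1_{\Psi,\nabla}(M)\to [M/(\Psi-1)]^{\nabla=0}\to 0, \]
together with $H^0_{\Psi,\nabla}(M)=(M^{\Psi=1})^{\nabla=0}$ and $H^2_{\Psi,\nabla}(M)=[M/(\Psi-1)]/\nabla$. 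Since $M/(\Psi-1)$ is finite dimensional, it is finitely presented over $D$ by Lemma \ref{lem:finitedimcoad}. Hence its contribution to the Euler characteristic of the $(\nabla,\Gamma_n)$-invariants is $\chi$ of a finite-dimensional $D$-module, i.e.\ of the complex $\mathbf{R}\Hom_D(\CC_p,M/(\Psi-1))$.

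Next we reorganise. Since $D$ has the resolution $[D\xrightarrow{Z}D]$ of $\CC_p$ (Proposition \ref{prop:DFMconsistent}), we have $C_{an}\simeq \mathbf{R}\Hom_D(\CC_p,[M\xrightarrow{\Psi-1}M])$. The complex $[M\xrightarrow{\Psi-1}M]$ has $H^0=M^{\Psi=1}$, which is finitely presented by Lemma \ref{lem:finitepres}, and $H^1=M/(\Psi-1)$, which is finite dimensional. Therefore
\[ \chi_{an}(\Gamma_n,M)=\chi\bigl(\mathbf{R}\Hom_D(\CC_p,M^{\Psi=1})\bigr)-\chi\bigl(\mathbf{R}\Hom_D(\CC_p,M/(\Psi-1))\bigr). \]
For any finite-dimensional $D$-module $V$, the complex $\mathbf{R}\Hom_D(\CC_p,V)=[V\xrightarrow{Z}V]$ has Euler characteristic $\dim V-\dim V=0$. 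This kills the second term and also the $M^{\varphi_K=1}$-part of $M^{\Psi=1}$ via the exact sequence $0\to M^{\varphi_K=1}\to M^{\Psi=1}\to\mathcal{C}(M)\to 0$ and additivity of $\chi$.

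It remains to handle $\mathcal{C}(M)$, which is finitely generated projective over the Prüfer domain $D$ by Lemma \ref{lem:finitepres}. We have $\mathbf{R}\Hom_D(\CC_p,\mathcal{C}(M))=[\mathcal{C}(M)\xrightarrow{Z}\mathcal{C}(M)]$. Multiplication by $Z$ is injective because $\mathcal{C}(M)$ is torsion free, and its cokernel is $\CC_p\otimes_D\mathcal{C}(M)$, of dimension $\operatorname{rank}_D\mathcal{C}(M)$ since $\mathcal{C}(M)$ is projective. This cokernel sits in degree $1$, giving $\chi=-\operatorname{rank}_D\mathcal{C}(M)$ and hence the formula.

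The main obstacle is justifying the identification of $C_{an}$ with $\mathbf{R}\Hom_D(\CC_p,-)$ applied to the two-term $\Psi$-complex. One has to check that the $\Gamma_n$-invariants of the $\nabla$-Koszul complex really compute $\mathbf{R}\Hom_{\mathcal{D}^K(\Gamma_n,\CC_p)}(\CC_p,-)$ (via Kohlhaase and the one-variable structure of $D$). One also has to check that the sign and degree conventions of the total complex match, so that the $H^1$ of $[M\to M]$ enters with the correct sign. I would first verify this on the level of the explicit complex $[D\xrightarrow{Z}D]$ before trusting the spectral-sequence bookkeeping.
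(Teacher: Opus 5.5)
Your argument is correct, and its last step is the same as the paper's. The paper's proof is two lines. It cites Proposition 6.2 of \cite{steingart2024iwasawa} for the identity $\chi_{an}(\Gamma_n,M)=-\dim_{\CC_p}\mathcal{C}(M)/Z\mathcal{C}(M)$. It then uses that $\mathcal{C}(M)$ is finitely generated projective over the domain $D$ (Lemma \ref{lem:finitepres}), so its fibre rank at $(Z)$ equals its rank.

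What you do differently is prove the cited identity yourself, using three ingredients:
\begin{enumerate}
\item the truncation triangle $M^{\Psi=1}[0]\to[M\xrightarrow{\Psi-1}M]\to (M/(\Psi-1)M)[-1]$;
\item the fact that $[V\xrightarrow{Z}V]$ has Euler characteristic $0$ for every finite-dimensional $V$;
\item the sequence $0\to M^{\varphi_K=1}\to M^{\Psi=1}\to\mathcal{C}(M)\to 0$.
\end{enumerate}
This makes the lemma self-contained. The price is that it explicitly needs $M/(\Psi-1)M$ and $M^{\varphi_K=1}$ to be finite-dimensional over $\CC_p$; the paper uses both facts elsewhere anyway.

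Two simplifications are available. First, the ``main obstacle'' you flag is not one. The paper defines $C_{an}$ as the total complex of $\mathbf{R}\Hom_D(\CC_p,M)\xrightarrow{\Psi-1}\mathbf{R}\Hom_D(\CC_p,M)$, using the Remark that compares $\mathbf{R}\Hom$ over $K$ and over $\CC_p$. With the resolution $[D\xrightarrow{Z}D]$ this is literally the total complex of $[M\xrightarrow{Z}M]\xrightarrow{\Psi-1}[M\xrightarrow{Z}M]$. The degree bookkeeping is then automatic: $\mathcal{C}(M)/Z\mathcal{C}(M)$ lands in $H^1_{an}$, which gives the sign. So you can drop both the Koszul/$\nabla$ description and your first paragraph.

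Second, you only need $M/(\Psi-1)M$ to be finite-dimensional. Its finite presentation (Lemma \ref{lem:finitedimcoad}) is not needed, and neither is the finite presentation of $M^{\Psi=1}$.
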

\begin{proof}
By \cite[Proposition 6.2]{steingart2024iwasawa}  the Euler--Characteristic is equal to the additive inverse of the fibre rank $\dim_{\CC_p}(\mathcal{C}(M)/Z\mathcal{C}(M)),$ where $Z \in D$ generates the augmentation ideal. By Lemma \ref{lem:finitepres} $\mathcal{C}$ is a finitely generated projective module over a domain and hence its fibre rank is constant and equal to its rank, which proves the claim. 
\end{proof}

\begin{thm} \label{thm:EPCformel}
Let $M$ be a $K$-analytic $(\varphi_K,\Gamma_K)$-module over $\cR_K$. Then 
$C_{\Psi}(M)$ is a perfect complex of $D^K(\Gamma_n,K)$-modules. Moreover
$M^{\Psi=1}$ is a finitely presented $D^K(\Gamma_n,K)$-module 
and \[
\operatorname{rank}_{D^K(\Gamma_n,K)} M^{\Psi=1} = -\chi_{an}(\Gamma_n,M) = [\Gamma_K:\Gamma_n]\operatorname{rank}(M).
\]
\end{thm}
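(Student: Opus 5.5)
Finite presentation of $M^{\Psi=1}$ is Corollary \ref{cor:perfectpsi}, so two things remain: the rank identity and perfectness of $C_\Psi(M)=[M\xrightarrow{\Psi-1}M]$.

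\emph{Rank identity.} Set $M_{\CC_p}:=\CC_p\widehat{\otimes}_KM$. By Corollary \ref{cor:perfectpsi}, $\operatorname{rank} M^{\Psi=1}=\operatorname{rank}\mathcal{C}(M)$, since $M^{\varphi_K=1}$ is finite-dimensional and hence torsion. The base change isomorphism $(M^r)^{\Psi=1}\widehat{\otimes}_K\CC_p\cong (M^r_{\CC_p})^{\Psi=1}$ recalled before Corollary \ref{cor:perfectpsi}, together with the coadmissibility of both sides, should give
\[
\operatorname{rank}_{D^K(\Gamma_n,K)}M^{\Psi=1}=\operatorname{rank}_{D^K(\Gamma_n,\CC_p)}M_{\CC_p}^{\Psi=1}.
\]
To see this, take a fibre at the augmentation ideal, or pass to the fraction field, using that the base change of a finitely generated projective $D$-module is finitely generated projective of the same rank. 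Lemma \ref{lem:firstEPCformula} then gives $\operatorname{rank}\mathcal{C}(M_{\CC_p})=-\chi_{an}(\Gamma_n,M_{\CC_p})$. By Lemma \ref{lem:anbasechange}, $\dim_{\CC_p}H^i_{an}(M_{\CC_p})=\dim_K H^i_{an}(M)$, so $\chi_{an}(\Gamma_n,M_{\CC_p})=\chi_{an}(\Gamma_n,M)$.

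For the last equality I compute $\chi_{an}(\Gamma_n,M)$. By Poyeton's equivalence, $M=M(W)$ for a $K$-analytic $B$-pair $W$ over $K$. The comparison $H^i_{an}(\Psi\times\Gamma_n,M)\cong H^i_{an}(K_n,W)$ identifies this with the Euler characteristic of the partial Selmer complex of $W|_{G_{K_n}}$, which equals $-\operatorname{rank}(W)$ by \cite{steingart2025partial} (applied over $K_n$). Here I must be careful with the normalisation of rank: viewed over $\cR_K$ but as a module for $\Gamma_n$, the restriction to $\Gamma_n$ multiplies the relevant dimension by $[\Gamma_K:\Gamma_n]$, since $H^i_{an}(\Gamma_n,M)$ should match $H^i$ of $\operatorname{Ind}$ from $K_n$, i.e.\ $H^i_{an}(K_n,W)$, and $[K_n:K]=[\Gamma_K:\Gamma_n]$. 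An alternative route is to apply the Euler characteristic formula to $W\otimes E[Q_n]$ over $K$ via Shapiro. This yields $-[\Gamma_K:\Gamma_n]\operatorname{rank}(M)$. Checking this bookkeeping, namely that the $\Gamma_n$-analytic cohomology of $M$ is the $K_n$-cohomology of the restricted $B$-pair rather than the $\Gamma_K$-version, is the step I expect to require the most care.

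\emph{Perfectness.} $D=D^K(\Gamma_n,K)$ is a Prüfer domain, and the cohomology of $C_\Psi(M)$ is $M^{\Psi=1}$ in degree $0$ and $M/(\Psi-1)$ in degree $1$. The first is finitely presented. The second is finite-dimensional with a continuous $D$-action (\cite[Proposition A.2]{SchneiderVenjakobRegulatorBook}), so it is finitely presented by Lemma \ref{lem:finitedimcoad}. Over a Prüfer domain, a finitely presented module has a two-term resolution by finite projectives: the kernel of $D^d\to X$ is finitely generated torsion free, hence projective. Moreover, a complex concentrated in degrees $[0,1]$ with finitely presented cohomology is perfect, because an extension of perfect objects in the derived category is perfect. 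Hence $C_\Psi(M)$ is a perfect complex of $D^K(\Gamma_n,K)$-modules.
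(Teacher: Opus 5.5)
Your proposal is correct and follows essentially the same route as the paper: perfectness from finite presentation of both cohomology groups over the Pr\"ufer domain $D^K(\Gamma_n,K)$, then transfer of the rank to $\CC_p$, then the Euler characteristic via \cite[Theorem 1]{steingart2025partial} together with Lemmas \ref{lem:anbasechange} and \ref{lem:firstEPCformula}; for the projective kernels the paper cites \cite[Lemma 1.9]{Berger}, where you use that finitely generated torsion-free modules are projective. Two points to tighten: the paper makes your ``should give'' rigorous by base-changing a finite presentation of $M^{\Psi=1}$ (strict, so completed base change is exact by \cite[Lemma 1.12]{steingart2024iwasawa}) to identify $\CC_p\widehat{\otimes}_K M^{\Psi=1}$ with the algebraic base change $D^K(\Gamma_n,\CC_p)\otimes_{D^K(\Gamma_n,K)} M^{\Psi=1}$, and it takes $-[K_n:K]\operatorname{rank}(M)$ directly from \cite[Theorem 1]{steingart2025partial}, so you should drop the intermediate ``$-\operatorname{rank}(W)$ over $K_n$'' and keep your Shapiro computation.
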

\begin{proof}
$M^{\Psi=1}$ is finitely presented by Corollary \ref{cor:perfectpsi} and its rank is equal to the rank of $\mathcal{C}(M).$ Let $J = \Hom(K,\CC_p)\setminus \{\id\}.$ 
Applying \cite[Lemma 1.9]{Berger} to a surjection $D^r  \to M^{\Psi=1}$ shows that the kernel of the surjection is finite projective i.e. $M^{\Psi=1}$ is perfect. 
Using the same Lemma and finite dimensionality of $M/(\Psi-1)$ one obtains that $M/(\Psi-1)$ is perfect, and hence $C_{\Psi}(M)$ is perfect.
It remains to compute the rank of $M^{\Psi=1}$, for which we write \[
D_{\CC_p}:=D^K(\Gamma_n,\CC_p)
          =\CC_p\widehat{\otimes}_K D,
\qquad
M_{\CC_p}:=\CC_p\widehat{\otimes}_K M.
\]
There are canonical isomorphisms
\[
D_{\CC_p}\otimes_D M^{\Psi=1}
 \cong \CC_p\widehat{\otimes}_K M^{\Psi=1}
 \cong (M_{\CC_p})^{\Psi=1}.
\]
For the first isomorphism, take a finite presentation  \[
D^r \to D^s\to M^{\Psi=1}\to 0.
\] 
This sequence is strict exact for the coadmissible topologies.
Completed base change to $\CC_p$ is exact by \cite[Lemma 1.12]{steingart2024iwasawa} and yields 
a finite presentation of $\CC_p\widehat{\otimes}_K M^{\Psi=1}=(M_{\CC_p})^{\Psi=1}$ over $D_{\CC_p}$.
On the other hand, algebraic base change of the same presentation to $D_{\CC_p}$ gives 
a finite presentation of $D_{\CC_p}\otimes_D M^{\Psi=1}$, but both presentations are given by the same matrices over $D_{\CC_p}$, 
hence the two modules are canonically isomorphic. Since $D\hookrightarrow D_{\CC_p}$ is an injective map of domains, we see that 
the \[
\operatorname{rank}_D M^{\Psi=1} =\operatorname{rank}_{D_{\CC_p}}     (M_{\CC_p})^{\Psi=1}.
\]
Then, by \cite[Theorem 1]{steingart2025partial} the Euler--Poincaré Characteristic of $H^\bullet_{g,J}(K_n,W(M))$ is $-[K_n:K]\operatorname{rank}(M)$ 
and it is equal to the Euler--Poincaré Characteristic $\chi_{an}(\Gamma_n,M)$ by Theorem 1 (vii) \textit{(loc.cit.)}. 
Using Lemma \ref{lem:anbasechange} the result for $M_{\CC_p}$ now follows from Lemma \ref{lem:firstEPCformula}, which proves the claim.
\end{proof}

While the formula from Lemma \ref{lem:firstEPCformula} can be proven intrinsically 
(working only in the world of $(\varphi_K,\Gamma_K)$-modules) we require additional inputs to determine the precise value of 
$\chi_{an}(\Gamma_n,M),$ which is reminiscent of the Euler--Poincaré Characteristic formula proven in \cite{liu2007cohomology}, 
which relies on Tate's formula as a non-trivial input. While Herr in \cite{herr1998cohomologie} gives an independent proof, 
it relies on mod $p$ techniques which are not available over the Robba ring. Our input comes from the study of partial de Rham Selmer groups in 
\cite{steingart2025partial}, which use the Euler--Poincaré-Characteristic formula for Galois cohomology as an input. 
Because two representations with different Hodge--Tate weights can have the same reduction mod $p$ these do not seem to make sense in this setting. 
It remains unclear to us, whether there is an intrinsic proof of the Euler--Poincaré-Characteristic formula. 

\subsection{Comparison between $M^{\Psi=1}/\nabla$ and partial Selmer groups}
As before we fix $\Gamma_n \cong o_K$ and we denote by $\mathcal{D}_\infty$ the algebra of smooth distributions. Locally constant functions are automatically $\QQ_p$ and $K$-analytic, hence  $\mathcal{D}_\infty$ can be viewed as a quotient of both $\mathcal{D}^{\QQ_p}(\Gamma_n,K)$ and $\mathcal{D}^{K}(\Gamma_n,K).$ 
The kernel of $\mathcal{D}^{K}(\Gamma_n,K) \to \mathcal{D}_\infty$ is a principal ideal generated by a single element (namely a $K$-basis $\nabla$ of the Lie-algebra). 
\begin{lem} \label{lem:invariantscoinvariant}
Let $X$ be a finitely generated coadmissible $\mathcal{D}_\infty$-module and let $\Gamma' \subset \Gamma_n$ be an open subgroup.  There is an isomorphism
$$H^0(\Gamma',X) \cong X/\Gamma'.$$
\end{lem}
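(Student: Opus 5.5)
The plan is to use the product decomposition of coadmissible $\mathcal{D}_\infty$-modules from Proposition \ref{prop:coad1} and reduce everything to a character-by-character statement. Write $X \cong \prod_\rho e_\rho X$ with each $e_\rho X$ finite dimensional over $E_\rho$ (here $E_\rho$ is the residue field of $\mathcal{D}_\infty$ at $\rho$, with $K$-coefficients). The group $\Gamma' \subset \Gamma_n$ acts on $X$ through $\mathcal{D}_\infty$: an element $\gamma \in \Gamma'$ acts on the component $e_\rho X$ as multiplication by the scalar $\rho(\gamma) \in E_\rho$. The subgroup $\Gamma'$ is open, so the smooth characters $\rho$ trivial on $\Gamma'$ are exactly those factoring through the finite group $\Gamma_n/\Gamma'$, and there are only finitely many of them.

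Invariants and coinvariants are then computed componentwise. For $\rho$ trivial on $\Gamma'$, $\Gamma'$ acts trivially on $e_\rho X$, which contributes fully to both sides. For $\rho$ nontrivial on $\Gamma'$, choose $\gamma \in \Gamma'$ with $\rho(\gamma) \neq 1$. Then $\gamma - 1$ acts on $e_\rho X$ as the invertible scalar $\rho(\gamma)-1$, so $(e_\rho X)^{\Gamma'} = 0$ and $(\gamma - 1)e_\rho X = e_\rho X$.

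For the invariants, $\Gamma'$-invariance of an element of $\prod_\rho e_\rho X$ is checked coordinatewise. Hence
$$H^0(\Gamma',X) = \prod_{\rho|_{\Gamma'}=1} e_\rho X = e_{\Gamma'} X,$$
where $e_{\Gamma'} = \sum_{\rho|_{\Gamma'}=1} e_\rho$ is a finite sum of idempotents, i.e.\ the idempotent in $\mathcal{D}_\infty$ projecting onto $E[\Gamma_n/\Gamma']$.

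For the coinvariants, write $X = e_{\Gamma'}X \oplus (1-e_{\Gamma'})X$. This is a decomposition as $\mathcal{D}_\infty$-modules, hence as $\Gamma'$-modules. The augmentation submodule $I_{\Gamma'}X$, spanned by the $(\gamma-1)x$, is zero on the first summand. The main obstacle is showing that $I_{\Gamma'}X$ is all of the second summand $Y := (1-e_{\Gamma'})X = \prod_{\rho|_{\Gamma'}\ne 1} e_\rho X$, because this is an infinite product and a single $\gamma$ will not work for all $\rho$ at once.

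To handle $Y$, I would take finitely many topological generators $\gamma_1,\dots,\gamma_d$ of $\Gamma' \cong \ZZ_p^d$. Every nontrivial $\rho|_{\Gamma'}$ satisfies $\rho(\gamma_i)\neq 1$ for some $i$. Partition the set of such $\rho$ into $S_1,\dots,S_d$, where $S_i$ consists of those $\rho$ with $i$ minimal such that $\rho(\gamma_i) \ne 1$. Let $f_i = \sum_{\rho\in S_i} e_\rho \in \mathcal{D}_\infty = \prod_\rho E_\rho$, which makes sense since $\mathcal{D}_\infty$ is a full product. Then $\gamma_i - 1$ is a unit in $f_i\mathcal{D}_\infty$; its inverse is the element with coordinates $(\rho(\gamma_i)-1)^{-1}$. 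So for $y \in Y$ we can write
$$y = \sum_i (\gamma_i - 1)\bigl(u_i f_i y\bigr),$$
where $u_i$ is that inverse. This uses only the $\mathcal{D}_\infty$-module structure, so it holds without any topological input. It follows that $X/\Gamma' \cong e_{\Gamma'}X \cong H^0(\Gamma',X)$, and the isomorphism is induced by the composite $H^0(\Gamma',X) \hookrightarrow X \twoheadrightarrow X/\Gamma'$.

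Finite generation of $X$ is not really needed beyond coadmissibility. It only guarantees that $e_{\Gamma'}X$ is finite dimensional.
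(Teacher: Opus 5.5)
Your argument is correct, but it takes a different route from the paper.

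\textbf{The paper's route.} It first reduces to $\Gamma'=\Gamma_n$ and views $X$ as a coadmissible $\mathcal{D}^K(\Gamma_n,K)$-module $X=\varprojlim_m X_m$. Because $\nabla$ kills $X$, the group acts discretely on each Banach piece $X_m$. Invariants and coinvariants of $X_m$ are then identified by averaging in characteristic zero. Finally it passes to the limit, using Theorem B for the submodule $I_{\Gamma_n}X$. This last step is where finite generation of $X$ enters: it makes $I_{\Gamma_n}X$ finitely generated, hence coadmissible.

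\textbf{Your route.} You work globally in $\mathcal{D}_\infty\cong\prod_\rho E_\rho$. Your idempotent $e_{\Gamma'}$ is exactly the normalized Haar measure of $\Gamma'$, i.e.\ the limit of the averaging operators the paper uses level by level. Your partition $S_1,\dots,S_d$ with the inverses $u_i$ replaces the limit argument by the identity $1-e_{\Gamma'}=\sum_i(\gamma_i-1)u_if_i$ in $\mathcal{D}_\infty$.

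\textbf{What each buys.} Yours is purely algebraic: no topology, no Theorem B, no reduction step. You can also use the same identity for the invariants instead of checking coordinatewise: if $(\gamma_i-1)x=0$ for all $i$, then $(1-e_{\Gamma'})x=0$. Then your argument gives $H^0(\Gamma',X)=e_{\Gamma'}X=X/I_{\Gamma'}X$ for every $\mathcal{D}_\infty$-module $X$. So your closing remark undersells it. Finite generation is not needed at all, and the finite dimensionality of $e_{\Gamma'}X$ already follows from coadmissibility alone. The paper's version, in exchange, stays inside the Fr\'echet--Stein framework of $\mathcal{D}^K(\Gamma_n,K)$, where the module $M^{\Psi=1}/\nabla$ of the later application naturally lives.
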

\begin{proof}
We can without loss of generality assume that $\Gamma'=\Gamma_n$ by viewing $X$ as a coadmissible $\mathcal{D}^{K}(\Gamma',K)$-module.
Since $X$ is coadmissible we can write $X= \varprojlim_m X_m$ with Banach spaces $X_m$ such that the image of $X$ in $X_m$ is dense. Since $\nabla$ acts trivially on $X$ the resulting action on $X_m$ is discrete. This can be seen by expanding $\gamma = \operatorname{exp(\log((\gamma-1)+1)}$ for $\gamma$ close to $1.$
Because we are in characteristic zero, the invariants and coinvariants of $X_m$ are isomorphic. 
We obtain 
\[
H^0(\Gamma_n,X) = \varprojlim_m H^0(\Gamma_n,X_m) \cong \varprojlim_m X_m/(\Gamma_n) \cong X/\Gamma_n,
\]
where in the last step we apply Theorem B to the submodule of $X$ generated by the augmentation ideal, which is also coadmissible because it is a finitely generated submodule of a coadmissible $\mathcal{D}^{K}(\Gamma_n,K)$-module. 
\end{proof}
\begin{prop} Let $W$ be a $K$-analytic $B$-pair and $M = M(W)$
The natural map 
$$M^{\Psi=1}/\nabla \to \varprojlim_{m}H^1_{an}(\Gamma_m,M) \cong \varprojlim_{m}H^1_{an}(K_m,W)$$ is injective and its co-kernel is finite $K$-dimensional. In particular both sides have $D_\infty$-rank equal to $[\Gamma_K\colon\Gamma_n]\operatorname{rank}(W).$
\end{prop}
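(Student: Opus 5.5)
The plan is to compare, level by level, the $\Gamma_m$-coinvariants of $M^{\Psi=1}/\nabla$ with $H^1_{an}(\Gamma_m,M)$, and then pass to the limit. By part (3) of the proposition on analytic cohomology, $H^1_{an}(\Gamma_m,M)$ is the $\Gamma_m$-invariants of $H^1_{\Psi,\nabla}(M)$. The double complex spectral sequence (the top row of the diagram in the proof of Lemma \ref{lem:anbasechange}) gives a short exact sequence
\[
0\to M^{\Psi=1}/\nabla \to H^1_{\Psi,\nabla}(M)\to [M/(\Psi-1)]^{\nabla=0}\to 0 .
\]
We take $\Gamma_m$-invariants. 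Exactness on the left is automatic. The failure of exactness on the right lies in $H^1(\Gamma_m, M^{\Psi=1}/\nabla)$, and we will not need to control it beyond finite dimensionality. Since $M/(\Psi-1)$ is finite dimensional, the right-hand term has dimension bounded independently of $m$. So for every $m$ we get an injection
\[
H^0(\Gamma_m, M^{\Psi=1}/\nabla)\hookrightarrow H^1_{an}(\Gamma_m,M)
\]
whose cokernel has dimension at most $\dim_K M/(\Psi-1)$.

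Next we identify the left-hand side. $X:=M^{\Psi=1}/\nabla$ is a finitely generated module over $\mathcal{D}_\infty=\mathcal{D}^K(\Gamma_n,K)/\nabla$, because $M^{\Psi=1}$ is finitely presented by Theorem \ref{thm:EPCformel}. It is coadmissible, since $\nabla$ is a $D$-linear endomorphism of a coadmissible module, so its cokernel is coadmissible. Lemma \ref{lem:invariantscoinvariant} therefore gives $H^0(\Gamma_m,X)\cong X/\Gamma_m = \mathcal{D}_\infty/\mathcal{I}_m\otimes X$. By Proposition \ref{prop:coad1}, $X\cong \varprojlim_m X/\Gamma_m$, because the coinvariants are exactly the finite products of the $\rho$-components $e_\rho X$.

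We then pass to the limit $\varprojlim_m$. The kernels are zero. The cokernels $Q_m$ are finite dimensional with uniformly bounded dimension, so the system of images is Mittag-Leffler and the inverse limit stays exact. The cokernel of $X\to \varprojlim_m H^1_{an}(\Gamma_m,M)$ therefore embeds into $\varprojlim_m Q_m$. The main obstacle is to show that this limit is finite dimensional; a uniform bound on $\dim Q_m$ alone does not suffice, since the limit of an inverse system of bounded finite-dimensional spaces can fail to be finite dimensional only through unbounded growth. To handle it, decompose by characters using Proposition \ref{prop:coad1}. Each $Q_m$ is a quotient of $[M/(\Psi-1)]^{\nabla=0,\Gamma_m}$, i.e.\ a subquotient of the fixed finite-dimensional $\Gamma_n$-representation $V:=[M/(\Psi-1)]^{\nabla=0}$ restricted to $\Gamma_m$-invariants. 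Under the character decomposition, only the finitely many $\rho$ occurring in $V\otimes\overline{K}$ can contribute to $e_\rho Q_m$. Hence $\varprojlim Q_m$ is supported on finitely many $\rho$, each with dimension at most $\dim V$, and is finite dimensional. This proves injectivity and finite cokernel. The identification with $\varprojlim_m H^1_{an}(K_m,W)$ is part (2) of the comparison proposition, applied levelwise and compatibly with corestriction.

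Finally, the ranks. $M^{\Psi=1}$ is finitely presented of $D$-rank $[\Gamma_K:\Gamma_n]\operatorname{rank}(M)$ by Theorem \ref{thm:EPCformel}. Since $D$ is a Prüfer domain, $M^{\Psi=1}\cong P\oplus T$ with $P$ projective of that rank and $T$ finitely presented torsion. Then $X=P/\nabla\oplus T/\nabla$. The fibres of $P/\nabla$ at the points $\rho$ all have dimension equal to the rank. $T$ has only finitely many zeros in the disc, so $T/\nabla$ has support at only finitely many $\rho$. Thus $X$ is good of rank $[\Gamma_K:\Gamma_n]\operatorname{rank}(W)$. The other side then has the same rank by Lemma \ref{lem:goodexactsequence}. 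As a cross-check, Proposition \ref{prop:smoothrank} applied to the $\Gamma_n$-extension, with $J$ all embeddings except the identity and weight $0$ there, gives rank $[K:\QQ_p]\operatorname{rank}W-([K:\QQ_p]-1)\operatorname{rank}W=\operatorname{rank} W$ per $\Gamma_n$-component. Over $\Gamma_K$ this agrees after accounting for the index $[\Gamma_K:\Gamma_n]$ via Shapiro's lemma.
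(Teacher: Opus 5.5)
Your overall strategy matches the paper's. You use the levelwise sequence from the Koszul double complex together with $H^1_{an}(\Gamma_m,M)=H^1_{\Psi,\nabla}(M)^{\Gamma_m}$, identify invariants with coinvariants via Lemma~\ref{lem:invariantscoinvariant}, write $X\cong\varprojlim_m X/\Gamma_m$, and pass to the limit by Mittag-Leffler with uniformly bounded cokernels. Avoiding right exactness of $\Gamma_m$-invariants is harmless. Note, though, that $Q_m$ is then a \emph{sub}space, not a quotient, of $H^0(\Gamma_m,[M/(\Psi-1)]^{\nabla=0})$.

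The ``main obstacle'' you single out is not an obstacle. If $\dim Q_m\le C$ for all $m$, then $\dim\varprojlim_m Q_m\le C$. To see this, take $C+1$ elements of the limit. The spaces of linear relations among their images in $Q_m$ form a decreasing chain of nonzero subspaces of $K^{C+1}$, so they share a common nonzero relation. The paper uses exactly this, and your character-by-character argument tacitly relies on the same fact for each $\rho$.

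The genuine gap is in the rank computation. You split $M^{\Psi=1}\cong P\oplus T$ and assert that the finitely presented torsion module $T$ ``has only finitely many zeros in the disc''. This is false for finitely presented torsion $D$-modules in general. For example, $D/\nabla D$ is cyclic with one relation, yet $(D/\nabla D)/\nabla=\mathcal{D}_\infty$ has rank one. So as written, nothing prevents $X=P/\nabla\oplus T/\nabla$ from having rank larger than $[\Gamma_K:\Gamma_n]\operatorname{rank}(M)$, or from failing to be good.

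What you need is that the torsion part is finite-dimensional. By the lemma on the heart and Corollary~\ref{cor:perfectpsi}, $\mathcal{C}(M)$ is finitely generated projective. Hence $M^{\Psi=1}\cong\mathcal{C}(M)\oplus M^{\varphi_K=1}$ with $M^{\varphi_K=1}$ finite-dimensional, and with that input your fibre count goes through.

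Alternatively, argue as the paper does, which amounts to your ``cross-check'' run in the correct direction. Apply Proposition~\ref{prop:smoothrank} over $K$ with $J=\Hom(K,E)\setminus\{\id\}$. This gives $N=([K:\QQ_p]-1)\operatorname{rank}W$, so $\varprojlim_m H^1_{an}(K_m,W)$ has $\mathcal{D}_\infty(\Gamma_K)$-rank $\operatorname{rank}W$, i.e.\ $\mathcal{D}_\infty(\Gamma_n)$-rank $[\Gamma_K:\Gamma_n]\operatorname{rank}W$. Injectivity with finite-dimensional cokernel then transfers this rank to $X$ by Lemma~\ref{lem:goodexactsequence}.
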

\begin{proof}
As in the proof of Lemma \ref{lem:anbasechange}
we have a short exact sequence 

$$0 \to H^0(\Gamma_m,M^{\Psi=1}/\nabla) \to H^1_{an}(\Gamma_m,M) \to H^0(\Gamma_m,[M/(\Psi-1)M]^{\nabla=0}) \to 0.$$

Since by Lemma \ref{cor:perfectpsi} $M^{\Psi=1}$ is finitely presented over $D$ we have, by the same arguments as in the proof of Proposition \ref{prop:DmodNablaLimit},
$M^{\Psi=1}/\nabla \cong \varprojlim_m M^{\Psi=1}/(\Gamma_m).$ Because  $M^{\Psi=1}/\nabla$ is a $\mathcal{D}_\infty$-module Lemma \ref{lem:invariantscoinvariant} implies that coinvariants and invariants are isomorphic. Furthermore, the $H^0(\Gamma_m,M^{\Psi=1}/\nabla)$ are finite $K$-dimensional since they inject into a finite $K$-dimensional space. By Mittag-Leffler
passing to the limit with respect to $m$ yields an exact sequence 
$$0 \to M^{\Psi=1}/\nabla \to \varprojlim_m H^1_{an}(\Gamma_m,M) \to X \to 0.$$ 
where $X= \varprojlim_m H^0(\Gamma_m,[M/(\Psi-1)M]^{\nabla=0})$ is finite dimensional because the dimension of each term is bounded independently of $m.$ The claim now follows from the rank computation in Proposition \ref{prop:smoothrank}.
\end{proof}

\subsection{Dual Comparison map}

The term ``admissible representation'' is quite overloaded, for example it has different meanings in \cite{schneider2003algebras}, \cite{schneiderteitlbaumlocallyanalytic}.
Let $\Lambda:= o_K\llbracket \Gamma_K \rrbracket,$ $\Lambda_K:= \Lambda[1/p].$ Let $\mathcal{D}^{\QQ_p}(\Gamma_K,K)$ be the $\QQ_p$-analytic distribution algebra and $\mathcal{D}^K(\Gamma_K,K)$-the $K$-analytic distribution algebra (which is a quotient of $\mathcal{D}^{\QQ_p}(\Gamma_K,K)$ by the Ideal $J$ generated by the Cauchy-Riemann differential equations for $K/\QQ_p$ cf. \cite{schneider2001p}).
\begin{defn}
	We say a locally convex topological vector space $V$ over $K$ with a continuous action of $\Gamma_K$ is $\Lambda_K$-admissible, if $V'=\Hom_{cts}(V,K)$ is a finitely generated $\Lambda_K$-module. We say $V$ is $\mathcal{D}^{\QQ_p}$-admissible (resp. $D^K$-admissible), if $V$ is of compact type, and the strong dual $V'_b$ is a coadmissible $\mathcal{D}^{\QQ_p}(\Gamma_K,K)$-module with respect to the unique $\mathcal{D}^{\QQ_p}(\Gamma_K,K)$-module structure extending the natural $\Lambda_K$-module structure (resp. if $V$ is of compact type and $V'_b$ is a coadmissible $\mathcal{D}^K(\Gamma_K,K)$-module.)
\end{defn} 
\begin{ex}
	\label{ex:H1admissible}
	Let $V \in \operatorname{Rep}_K(G_K),$ then 
	$W:= H^1(H_K,V)$ is $\Lambda_K$-admissible, $W'$ is finitely presented over $\Lambda_K$ and 
	$(W^{la})'_b = \mathcal{D}^{\QQ_p}(\Gamma_K,K)\otimes_{\Lambda_K}W'$ is coadmissible, i.e., $W^{la}$ is $D^{\QQ_p}$-admissible. 
	Furthermore $(W^{K-la})'_b= \mathcal{D}^K(\Gamma_K,K)\otimes_{\Lambda_K}W',$ in particular, $W^{K-la}$ is $\mathcal{D}^K$-admissible. 
\end{ex}
\begin{proof}
	The first part follows (using Tate-duality) from $W'=H^1_{\text{Iw}}(K_\infty/K,V^*(1))$ being finitely presented by \cite{SV15}. The claim for $M:=(W^{la})'_b$ is \cite[Proposition 7.1]{schneider2003algebras}. For the last part note that $W^{K-la}$ is the subspace of $W^{la}$ annihilated by the ideal $J.$
	This ideal is finitely generated, hence $JM \subset M$ is a closed (hence coadmissible) sub module and dualising the strict exact sequence 
	$$0 \to JM \to M \to M/JM \to 0$$ allows us to identify $M/JM = \mathcal{D}^{\QQ_p}(\Gamma_K,K)/J \otimes_{\mathcal{D}^{\QQ_p}(\Gamma_K,K)}M $ with the dual of the closed subspace $$X = \{w \in W^{la} \mid \langle w,m \rangle= 0 \text{ for all } m \in JM \}.$$ We clearly have $W^{K-la} = W^{J=0} \subset X.$ Conversely, if $x \in X$ and $m \in M,$ then $\langle x, jm\rangle =\langle \iota(j)x,m \rangle = 0,$ where $\iota$ is the automorphism induced by inversion on $\Gamma_K.$ Hence $\iota(j)x=0$ as a functional on $M,$ but by reflexivity $\iota(j)x=0.$ Note that $J$ is stable under $\iota,$ from which we conclude $x \in W^{K-la}.$
	Combining the above 
	\begin{align*}(W^{K-la})'_b &= D/J \otimes_{\mathcal{D}^{\QQ_p}(\Gamma_K,K)} M \\
	&= D^K(\Gamma_K,K) \otimes_{\mathcal{D}^{\QQ_p}(\Gamma_K,K)} (\mathcal{D}^{\QQ_p}(\Gamma_K,K) \otimes_{\Lambda_K}W')\\
	&= \mathcal{D}^K(\Gamma_K,K)\otimes_{\Lambda_K}W'.\end{align*}
	
\end{proof}
Let $H_K \subseteq G_K$ be the kernel of the Lubin--Tate character. 
In \cite{SV23comparing} the relationship between different categories of $(\varphi_K,\Gamma_K)$-modules is explained in detail. 
Recall that to $V \in \operatorname{Rep}_K(G_K)$ we can assign 
$\tilde{D}^\dagger_{rig}(V):= (\widetilde{\cR}\otimes_KV)^{H_K}$ a $(\varphi_K,\Gamma_K)$-module over the perfect Robba ring $\widetilde{\cR}.$ We also have $\tilde{D}^\dagger(V)$ defined over the sub-ring of overconvergent elements. Furthermore, if $V$ is overconvergent, then $\tilde{D}^\dagger_{rig}$ (resp. $\tilde{D}^\dagger$) can be descended to $D^\dagger_{rig}(V)$ and resp. $D^\dagger(V)$ living over $\cR_K$ (resp. the overconvergent subring $\cR_K^\dagger.$) 
Let us recall the main results namely:
\begin{thm}
	Let $V \in \operatorname{Rep}_\dagger(G_K),$ then for $\tilde{D} = \tilde{D}^\dagger_\text{rig}(V)$ or $\tilde{D}^\dagger(V)$ respectively, $D =D^\dagger_\text{rig}(V)$ or $D^\dagger(V)$ we have a canonical $\Gamma_K$-equivariant commutative diagram with exact rows
$$	\begin{tikzcd}
		0 \arrow[r] & {H^0(H_K,V)} \arrow[r] & \tilde{D} \arrow[r, "\varphi-1"] & \tilde{D} \arrow[r] & {H^1(H_K,V)} \arrow[r] & 0 \\
		0 \arrow[r] & {H^0(H_K,V)} \arrow[r] \arrow[u, equal] & D \arrow[r, "\varphi-1"] \arrow[u, hook] & D \arrow[r] \arrow[u, hook] & {H^1_\dagger(H_K,V)} \arrow[u, hook] \arrow[r] & 0,
	\end{tikzcd}$$
	where $H^1(H_K,V)$ is the Galois cohomology of $H_K$ with coefficients in $V$ and $H^1_\dagger(H_K,V)$ is defined as the co-kernel of $\varphi-1$ on $D.$
\end{thm}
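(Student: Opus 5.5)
The plan is to construct both rows at once from a single short exact sequence of Galois modules, and then compare them through the natural inclusion $D \hookrightarrow \tilde{D}$. We treat $\tilde{D}=\tilde{D}^\dagger_{\mathrm{rig}}(V)$; the overconvergent case $\tilde{D}^\dagger(V)$ is identical with $\widetilde{\cR}^\dagger$ in place of $\widetilde{\cR}$.

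\textbf{Top row.} We start from the Artin--Schreier type sequence
$$0 \to K \to \widetilde{\cR} \xrightarrow{\varphi-1} \widetilde{\cR} \to 0,$$
which is exact on $\CC_p^\flat$-points. Surjectivity of $\varphi-1$ on the perfect Robba ring is the standard Kedlaya-type result: solve $\varphi$-equations on each annulus and glue. Since $V$ is finite free over $K$, tensoring with $V$ keeps the sequence exact, and we then take $H_K$-cohomology. This gives the top row provided
$$H^1(H_K,\widetilde{\cR}\otimes_K V)=0.$$
We prove this vanishing by choosing a $\varphi$-stable model over an annulus and using almost étale descent for $\widetilde{\cR}^{[r,s]}$ (Kedlaya--Liu). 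That argument yields vanishing of the continuous higher $H_K$-cohomology on each annulus. We then pass to the limit over $s$ (a Mittag-Leffler / $\mathbf{R}\lim$ argument with Fréchet topologies) and finally to the colimit over $r$. The identification $(\widetilde{\cR}\otimes_K V)^{H_K}=\tilde{D}$ holds by definition, and $H^0$ of $K\otimes_K V$ is $H^0(H_K,V)$. The resulting four-term sequence is $\Gamma_K$-equivariant because every map commutes with $G_K$.

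\textbf{Bottom row.} Here $H^1_\dagger(H_K,V)$ is defined as the cokernel of $\varphi-1$ on $D$, so exactness needs only two checks. Injectivity of $H^0(H_K,V)\to D$ is clear. For exactness at the first copy of $D$, we use $D^{\varphi=1}\subseteq \tilde{D}^{\varphi=1}=H^0(H_K,V)$, which follows from the top row. We must also know that $H^0(H_K,V)$ actually lands in $D$. This holds because $\varphi$-invariants of the perfect Robba ring with $H_K$-action lie in the imperfect one, by the descent results of \cite{SV23comparing}. Commutativity of the left two squares is then immediate.

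\textbf{The vertical map on $H^1$.} The inclusion $D\subset\tilde{D}$ induces $H^1_\dagger(H_K,V)\to H^1(H_K,V)$, and we must show it is injective. This is equivalent to
$$D\cap(\varphi-1)\tilde{D}=(\varphi-1)D.$$
This is the main obstacle. Our approach: given $y\in D$ with $y=(\varphi-1)x$ for some $x\in\tilde{D}$, decompose $\tilde{D}=D\oplus \tilde{D}_{\mathrm{res}}$ using normalized traces or Tate–Sen style projections as in \cite{SV23comparing}, namely the complement defined via the non-integral Frobenius decomposition $\widetilde{\cR}=\cR_K\oplus(\cdots)$ coming from $\varphi^{-n}$ of $\Psi$-type projections. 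We then show that $\varphi-1$ is bijective on the complement, since $\varphi^{-1}$ is contracting there. From this it follows that the complementary component of $x$ vanishes, so $x\in D$.

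If that decomposition turns out to be unavailable in the Lubin--Tate setting, the fallback is to apply $\Psi$ iteratively. We would rewrite $x$ using the identity $\Psi\varphi=\mathrm{id}$ and a limit argument, aiming to produce an element of $D$ with the same image under $\varphi-1$.
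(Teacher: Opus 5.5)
The paper does not prove this statement; it quotes it from \cite{SV23comparing}. Your construction of the two rows is fine in outline: Artin--Schreier for $\widetilde{\cR}$ together with $H^1(H_K,\widetilde{\cR}\otimes_K V)=0$ via Kedlaya--Liu gives the top row, and the bottom row is then formal. Also, $V^{H_K}\subseteq D$ is trivial because $K$ lies in the coefficient ring, so no descent result is needed there.

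The genuine gap is the injectivity of $H^1_\dagger(H_K,V)\to H^1(H_K,V)$, i.e.\ that $x\in\tilde D$ with $(\varphi-1)x\in D$ must lie in $D$. The mechanism you propose for this cannot work, for three reasons.

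\emph{No complement of $D$ in $\tilde D$ is $\varphi$-stable.} For $0\neq u\in D^{\Psi=0}$ the element $v:=\varphi^{-1}(u)$ is not in $D$. Yet if $C$ were a $\varphi$-stable complement, writing $v=d+c$ would give $\varphi(c)=u-\varphi(d)\in D\cap C=0$, so $c=0$. Concretely, the $\Psi$-type projection $R_0:=\Psi^m\circ\varphi^m$ on $\varphi^{-m}(D)$ satisfies $R_0\circ\varphi^{-1}=\Psi\circ R_0$. So $C=\ker R_0$ is only $\varphi^{-1}$-stable, and the equation $(\varphi-1)x=y$ does not split into components.

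\emph{``Bijective'' fails wherever it makes sense.} On $\tilde D/D$, where $\varphi-1$ does act, surjectivity would make $H^1_\dagger(H_K,V)\to H^1(H_K,V)$ surjective, contradicting Proposition~\ref{prop:Qpproper} (and \cite[Remark 5.9]{SV23comparing}). On $C=\ker R_0$, bijectivity of $1-\varphi^{-1}$ would let you solve $(1-\varphi^{-1})x_1=\varphi^{-1}(u)$ with $x_1\in C$, since $\varphi^{-1}(u)\in C$. This yields $0\neq x_1\notin D$ with $(\varphi-1)x_1=u\in D$, a counterexample to the very injectivity you want.

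\emph{The contraction heuristic points the wrong way.} $\varphi^{-1}$ merely rescales radii. So the norm of $\varphi^{-n}(u)$ on a fixed annulus is the norm of $u$ on annuli approaching the boundary, which stays bounded away from $0$ for $u\neq 0$. A genuine contraction would make $\sum_n\varphi^{-n}$ converge and would produce exactly the counterexamples above.

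What is true is only injectivity of $\varphi-1$ on $\tilde D/D$, and it comes from this non-contraction. Write $x=x_0+\sum_{n\ge1}\varphi^{-n}(u_n)$ with $x_0=R_0(x)$ and $u_n\in D^{\Psi=0}$, and compare components in $x=\varphi^{-1}(x)+\varphi^{-1}(y)$. This forces $x_0=\Psi(x_0+y)$, $u_1=(1-\varphi\Psi)(x_0+y)$, and $u_n=u_{n-1}$ for $n\ge 2$. To conclude $u_1=0$, you still need two things:
\begin{enumerate}
\item the component projections extend equicontinuously to the completion $\tilde D$, so that the components of any element tend to $0$;
\item the lower bound on the norms of $\varphi^{-n}(u_1)$.
\end{enumerate}
Neither is in your proposal. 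Your fallback of iterating $\Psi$ only reproduces $x_0=R_0(x)$, which satisfies $(\varphi-1)x_0-y\in D^{\Psi=0}$; proving that this difference vanishes is precisely the missing step.

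For $D^\dagger(V)$ one can instead argue through the étale rings. The imperfect $D(V)=(\mathbf{B}\otimes_K V)^{H_K}$ already computes $H^1(H_K,V)$, by Artin--Schreier and additive Hilbert 90 over the separable closure of $\mathbf{E}_K$. Moreover $\mathbf{B}\cap\widetilde{\mathbf{B}}^\dagger=\mathbf{B}^\dagger$ gives $D(V)\cap\tilde D^\dagger(V)=D^\dagger(V)$, which yields injectivity. Passing to $D^\dagger_{\mathrm{rig}}(V)$ then requires a separate comparison of $\varphi$-cokernels over the bounded and the Robba rings.
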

In \cite[Remark 5.9]{SV23comparing} it is further shown that for $K\neq \QQ_p$ (and $V \neq 0$) the inclusion $H^1_\dagger(H_K,V) \to H^1(H_K,V)$ is not an isomorphism. 
We will make this more precise, it is in fact never an isomorphism.
\begin{prop} \label{prop:Qpproper} Let $0 \neq V \in \operatorname{Rep}_\dagger(G_K).$
	The natural map $H^1_\dagger(H_K,V) \to H^1(H_K,V)$ is not surjective. The image $H^1_\dagger(H_K,V)$ is contained in the (proper) subspace $H^1(H_K,V)^{la}$ of $\QQ_p$-locally analytic vectors. 
\end{prop}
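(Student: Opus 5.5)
The plan is to show that every element in the image of $D = D^\dagger_{\mathrm{rig}}(V)$ (or $D^\dagger(V)$) in $H^1(H_K,V)$ is a $\QQ_p$-locally analytic vector for the $\Gamma_K$-action. We then show separately that $H^1(H_K,V)^{la}$ is a proper subspace of $H^1(H_K,V)$. Together these give non-surjectivity.

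\emph{Containment.} The module $D$ is a $(\varphi_K,\Gamma_K)$-module over $\cR_K$ (resp.\ $\cR_K^\dagger$). By the standard theory (Berger, Kedlaya–Pottharst–Xiao style arguments, or directly from the definition of models $D^{[r,1)}$), every element of $D$ lies in some model $D^{[r,1)}$. On this Fréchet space $\Gamma_K$ acts locally analytically, at least $\QQ_p$-analytically. Concretely, for $x \in D^{[r,1)}$, the orbit map $\gamma \mapsto \gamma x$ is locally analytic on a small open subgroup: the action on $\cR_K^{[r,1)}$ is given by $T \mapsto [\chi(\gamma)](T)$, which is analytic in $\chi(\gamma)$, and on a basis of the finite projective model the action is given by matrices close to the identity whose logarithm converges.

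The surjection $D \to H^1_\dagger(H_K,V)$ is $\Gamma_K$-equivariant. The map $H^1_\dagger(H_K,V) \to H^1(H_K,V)$ is $\Gamma_K$-equivariant and continuous, using the topology on $H^1(H_K,V)$ as the cokernel of $\varphi-1$ on $\tilde D$, or its natural topology as a $\Lambda_K$-admissible space (Example \ref{ex:H1admissible}). The image of a locally analytic vector under a continuous equivariant map is locally analytic, provided the orbit map composes into the target. The step needing care is the topology. I would first check that the composite $D^{[r,1)} \to \tilde D \to H^1(H_K,V)$ is continuous for a Banach/Fréchet structure in which $H^1(H_K,V)$ is the space whose dual is $H^1_{\mathrm{Iw}}(V^*(1))$. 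This gives $H^1_\dagger(H_K,V) \subseteq H^1(H_K,V)^{la}$.

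\emph{Properness.} By Example \ref{ex:H1admissible}, $W := H^1(H_K,V)$ has dual $W' = H^1_{\mathrm{Iw}}(K_\infty/K, V^*(1))$, which is finitely presented over $\Lambda_K$, and $(W^{la})'_b = \mathcal{D}^{\QQ_p}(\Gamma_K,K) \otimes_{\Lambda_K} W'$. If $W^{la} = W$, then dually $W'$ would be a $\Lambda_K$-module on which the topology is already coadmissible over $\mathcal{D}^{\QQ_p}$. In other words, the natural map $W' \to \mathcal{D}^{\QQ_p}\otimes_{\Lambda_K} W'$ would be an isomorphism; it is injective by faithful flatness. This forces $W'$ to be a $\Lambda_K$-module on which $\mathcal D$ acts, hence a torsion $\Lambda_K$-module, in fact finite dimensional. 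This holds because a nonzero free summand $\Lambda_K \subsetneq \mathcal{D}$ would be a strict inclusion.

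It therefore suffices to show that $H^1_{\mathrm{Iw}}(V^*(1))$ has positive $\Lambda_K$-rank. By the Euler–characteristic/rank formula for Iwasawa cohomology, its rank is $[K:\QQ_p]\dim V > 0$. This holds even for $K=\QQ_p$, which explains why the map is never surjective, including in the cyclotomic-like case.

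I expect the main obstacle to be the rigorous dual argument that $W^{la}\neq W$: making precise that equality would force $W' \to \mathcal D\otimes_{\Lambda_K} W'$ to be bijective, and that this fails on a free summand. An alternative is to exhibit directly a vector of $W$ that is continuous but not locally analytic, using the non-locally-analytic continuous functions inside $\mathcal C(\Gamma_K,K) \cong (\Lambda_K)'$ embedded via a free summand.
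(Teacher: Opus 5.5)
Your route is the paper's. The containment comes from analyticity of the $\Gamma_K$-action on $D$, pushed into the Banach space $W=H^1(H_K,V)$. Properness comes from comparing $W'=H^1_{\text{Iw}}(K_\infty/K,V^*(1))$ with $(W^{la})'_b=\mathcal{D}^{\QQ_p}(\Gamma_K,K)\otimes_{\Lambda_K}W'$ and using that $W'$ has positive rank.

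On the containment, a small correction. Elements of a model $D^{[r,1)}$ are only pro-analytic, not locally analytic on a fixed small open subgroup: the subgroup on which the orbit map is analytic shrinks as one approaches the boundary of the annulus. Analyticity on a single open subgroup appears only after composing with a continuous map into the Banach space $W$, which factors through one seminorm. This is exactly how the paper argues.

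\textbf{The gap.} The step you yourself flag is a genuine gap as written. A set-theoretic equality $W^{la}=W$ only shows that the transpose $W'\to (W^{la})'$ is injective, by density. The dual $(W^{la})'_b$ is taken for the finer LB-topology on $W^{la}$, so it could a priori contain more functionals. The missing idea, which the paper supplies, is the open mapping theorem:
\begin{itemize}
\item since $W$ is admissible, $W^{la}$ carries an LB-topology (inductive limit of the Banach spaces of $H$-analytic vectors) for which the inclusion $W^{la}\hookrightarrow W$ is continuous;
\item if this inclusion is bijective, it is a homeomorphism onto the Banach space $W$;
\item only then do the continuous duals coincide, giving $W'\cong\mathcal{D}^{\QQ_p}(\Gamma_K,K)\otimes_{\Lambda_K}W'$ via the natural map.
\end{itemize}

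\textbf{The final contradiction.} This also needs repair when $K\neq\QQ_p$. Then $\Gamma_K$ has dimension $[K:\QQ_p]\geq 2$, $\Lambda_K$ is not a product of PIDs, and a finitely generated module of positive rank need not have a free direct summand. A fix:
\begin{itemize}
\item Pass to $\Gamma_n\cong o_K$, which changes nothing since $\Lambda(\Gamma_K)$ is finite free over $\Lambda(\Gamma_n)$.
\item Choose an injection $\Lambda_K^r\hookrightarrow W'$ with $r=\operatorname{rank}W'>0$.
\item Flatness gives $\mathcal{D}^r\hookrightarrow \mathcal{D}\otimes_{\Lambda_K}W'\cong W'$.
\item Since $W'$ is a noetherian $\Lambda_K$-module, $\mathcal{D}$ would then be finitely generated over $\Lambda_K$, so every element of $\mathcal{D}$ would be integral over $\Lambda_K$.
\item This is false. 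The coefficients of a monic equation over $\Lambda_K=o_K\llbracket T_1,\dots,T_d\rrbracket[1/p]$ are bounded on the open polydisc, which forces any root in $\mathcal{D}$ to be bounded, but $\log(1+T_1)$ is unbounded.
\end{itemize}
Your intermediate claim that $W'$ would be finite-dimensional is unjustified, and it is not needed.
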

\begin{proof}
	It is an easy observation that the elements of $D = D^\dagger(V)$ or $D^\dagger_{rig}(V)$ have pro-analytic $\Gamma$-action. It follows that the action of $\gamma$ on their images in $H^1(H_K,V)$ is pro-analytic. Since $H^1(H_K,V)$ is a Banach space, this is the same as being analytic. The dual of $H^1(H_K,V)$ is $H^1_{\text{Iw}}(K_\infty/K,V^*(1)).$ In particular, finitely generated over the Iwasawa algebra $o_K\llbracket \Gamma_K \rrbracket[1/p].$ Consequently $H^1(H_K,V)$ is admissible, and thus, the locally analytic vectors are a dense subspace, which can be equipped with a LB-topology which makes the inclusion $H^1(H_K,V)^{la} \to H^1(H_K,V)$ continuous. If this map were an isomorphism, it would have to be a homeomorphism by the open mapping theorem. But in this case the linear duals would coincide, but since $W:=H^1(H_K,V)$ is admissible, we have: 
	$(W^{la})' = \mathcal{D}^{\QQ_p}(\Gamma) \otimes_{\Lambda} W'.$ By the assumption $V \neq 0$ we conclude that $W'$ has non-zero $\Lambda$-rank. It is easy to see that $W' \neq \mathcal{D}^{\QQ_p}(\Gamma) \otimes_{\Lambda} W'.$
\end{proof}
\begin{prop}
	Let $K=\QQ_p,$ then $H^1_\dagger(H_K,V) = H^1(H_K,V)^{la}.$
\end{prop}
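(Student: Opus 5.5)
The plan is to prove the two inclusions separately; only the reverse inclusion $H^1(H_K,V)^{la}\subseteq H^1_\dagger(H_K,V)$ needs real work. The forward inclusion $H^1_\dagger(H_K,V)\subseteq H^1(H_K,V)^{la}$ was already established in the proof of Proposition \ref{prop:Qpproper}: elements of $D^\dagger_{rig}(V)$ have pro-analytic $\Gamma$-action, so their images in the Banach space $H^1(H_K,V)$ are analytic vectors.

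For the reverse inclusion I would argue by duality, following the proof of Proposition \ref{prop:Qpproper}. Put $W:=H^1(H_K,V)$. By Example \ref{ex:H1admissible}, and using that for $K=\QQ_p$ the $K$-analytic and $\QQ_p$-analytic distribution algebras coincide, the dual $(W^{la})'_b$ is $\mathcal{D}\otimes_{\Lambda_K}H^1_{\text{Iw}}(V^*(1))$. By Colmez's isomorphism $\mathcal{D}\otimes_{\Lambda}H^1_{\text{Iw}}(V^*(1))\cong D^\dagger_{rig}(V^*(1))^{\psi=1}$, this dual is identified with $D^\dagger_{rig}(V^*(1))^{\psi=1}$.

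On the other side, the residue pairing $D\times D^\dagger_{rig}(V^*(1))\to \cR_K(\chi)\xrightarrow{\operatorname{res}}\QQ_p$ makes $\varphi$ and $\psi$ adjoint. It therefore induces a pairing
$$D/(\varphi-1)\times D^\dagger_{rig}(V^*(1))^{\psi=1}\to \QQ_p.$$
I expect this pairing to be perfect, i.e.\ that it identifies $D/(\varphi-1)=H^1_\dagger(H_K,V)$, with a suitable LF/LB topology, with the dual of the Fréchet module $D^\dagger_{rig}(V^*(1))^{\psi=1}$. This is essentially Serre duality on the Robba ring combined with strictness of $\varphi-1$ and $\psi-1$.

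The next step is to check that the inclusion $H^1_\dagger(H_K,V)\to W^{la}$ is compatible with these two dualities: the functional that $H^1_\dagger$ induces on $H^1_{\text{Iw}}(V^*(1))$ should agree with the Tate local duality pairing $H^1(H_K,V)\times H^1_{\text{Iw}}(V^*(1))\to\QQ_p$, via Colmez's comparison map $H^1_{\text{Iw}}\to D^{\psi=1}$. Granting this, the map $H^1_\dagger\to W^{la}$ becomes the transpose of an isomorphism of reflexive spaces, hence is surjective. For reflexivity I would use that $W^{la}$ is of compact type, since its dual is coadmissible.

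The main obstacle is the compatibility of the residue pairing with Tate/Iwasawa duality under the comparison maps; this is the classical explicit reciprocity statement of Cherbonnier--Colmez. Once it is in place, the remaining difficulty is topological, namely that $\varphi-1$ has closed image on $D$, so that $D/(\varphi-1)$ is Hausdorff and really is the dual. I would try to establish this using the finite-dimensionality of $D/(\psi-1)$ together with \cite[Lemma 1.11]{steingart2024iwasawa}.
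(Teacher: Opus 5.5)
Your proposal follows essentially the paper's proof. The paper identifies $(H^1(H_K,V)^{la})'$ with $\mathcal{D}\otimes_{\Lambda}H^1_{\text{Iw}}(V^*(1))\cong D^\dagger_{rig}(V^*(1))^{\psi=1}$ (Example~\ref{ex:H1admissible} and Colmez) and then dualises using that $\varphi-1$ is the transpose of $\psi-1$; the compatibility of pairings and the closedness of $(\varphi-1)D$ that you flag are left implicit there, and the latter already follows from the injectivity of $H^1_\dagger(H_K,V)\to H^1(H_K,V)$ into a Hausdorff space, provided $D\to H^1(H_K,V)$ is continuous.
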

\begin{proof}
We use again that $H^1(H_K,V)$ is an admissible representation, hence $(H^1(H_K,V)^{la})' = \mathcal{D}^{\QQ_p}(\Gamma)\otimes_{\Lambda} H^1_{\text{Iw}}(V^*(1)) = D^\dagger_{\text{rig}}(V^*(1))^{\Psi=1}$ by \cite[Proposition V.1.18.]{colmez2010representations}. 
Dualising and using that $\varphi-1$ is the transpose of $\psi-1$ yields the claim. 
\end{proof}
\begin{lem}
	Let $V \in \operatorname{Rep}^{K-an}(G_K).$ Then 
\begin{enumerate}
	\item 	$H^1_\dagger(H_K,V)$ is a subspace of $H^1(H_K,V)^{K-la}.$
	\item 	 $(H^1(H_K,V)^{K-la})' \cong D^{K-an}(\Gamma)\otimes_{\Lambda}H^1_{\text{Iw}}(V^*(1)).$
\end{enumerate}
\end{lem}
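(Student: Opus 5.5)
The plan is to mimic the proofs of Proposition \ref{prop:Qpproper} and Example \ref{ex:H1admissible}, replacing $\QQ_p$-local analyticity by $K$-local analyticity throughout.

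For (1), let $D=D^\dagger(V)$ or $D^\dagger_{rig}(V)$. Since $V$ is $K$-analytic, $D$ is a $K$-analytic $(\varphi_K,\Gamma_K)$-module. Concretely, on the pro-analytic vectors the derived action of $\operatorname{Lie}(\Gamma_K)$ is $K$-bilinear, i.e. $D$ is killed by the ideal $J$ generated by the Cauchy--Riemann operators. By the comparison theorem recalled above, $H^1_\dagger(H_K,V)$ is the image of $D$ under the $\Gamma_K$-equivariant map $D\to \tilde D\to H^1(H_K,V)$. By Proposition \ref{prop:Qpproper} this image already lies in $H^1(H_K,V)^{la}$. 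Equivariance then shows that the image is annihilated by $J$, since $J$ acts through the derived action, which is compatible with the map on pro-analytic vectors. So the image lies in $W^{J=0}=W^{K-la}$ with $W=H^1(H_K,V)$.

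The delicate point is that $\gamma$ acts on elements of $D$ pro-analytically, not analytically. One therefore has to check that the derived action on the image in the Banach space $W$ agrees with the image of the derived action on $D$. I would argue via Taylor expansions on a model $D^{[r,1)}$. On such a model the orbit maps are analytic on a small subgroup $\Gamma_m$. The continuous map $D^{[r,1)}\to \tilde D\to W$ then transports both the analyticity and the vanishing of $J$. I expect this to be the main obstacle, mostly bookkeeping about which model and which subgroup are used.

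For (2), Example \ref{ex:H1admissible} already states $(W^{K-la})'_b=\mathcal{D}^K(\Gamma_K,K)\otimes_{\Lambda_K}W'$. Tate duality gives $W'=H^1(H_K,V)'\cong H^1_{\text{Iw}}(K_\infty/K,V^*(1))$, as in the proof of Proposition \ref{prop:Qpproper}. Substituting this in yields $(H^1(H_K,V)^{K-la})'\cong D^{K-an}(\Gamma)\otimes_\Lambda H^1_{\text{Iw}}(V^*(1))$. Tensoring over $\Lambda$ or over $\Lambda_K$ gives the same result, since $D^{K-an}$ is a $\Lambda_K$-algebra.

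The ingredients used in the Example are the following:
\begin{enumerate}
\item finite presentation of $H^1_{\text{Iw}}$ from \cite{SV15};
\item $(W^{la})'_b=\mathcal{D}^{\QQ_p}\otimes_{\Lambda_K}W'$ from \cite[Proposition 7.1]{schneider2003algebras};
\item the identification $W^{K-la}=W^{la,J=0}$, whose dual is $M/JM$ by the closedness of $JM$ and reflexivity.
\end{enumerate}
These only need to be recalled, with the $\iota$-stability of $J$ noted.
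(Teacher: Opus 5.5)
Your proposal is correct and follows the paper's proof. The paper obtains (1) by the same argument as Proposition \ref{prop:Qpproper}: elements of the $K$-analytic module $D$ are pro-$K$-analytic, hence so are their images in the Banach space $H^1(H_K,V)$, and your transport of $J$-annihilation along the equivariant map makes this explicit. Part (2) is exactly Example \ref{ex:H1admissible} combined with the Tate duality $H^1(H_K,V)'\cong H^1_{\text{Iw}}(K_\infty/K,V^*(1))$.

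One minor inaccuracy: on a Fréchet model $D^{[r,1)}$ the action is only pro-analytic, not analytic on a fixed $\Gamma_m$. This does not affect the argument. Once Proposition \ref{prop:Qpproper} puts the image in $W^{la}$, the compatibility of derived actions, and hence the vanishing of $J$ on the image, follows from continuity and equivariance of the map alone.
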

\begin{proof}
The first part is proven analogously as \ref{prop:Qpproper}. The second part is Example \ref{ex:H1admissible}.
\end{proof}

\begin{thm} Suppose $V^*(1)$ is $K$-analytic. Let $D:=D^\dagger_{\text{rig}}(V^*(1)),$ let $\chi_C$ be the character $x\abs{x}$ and $\check{D}:= \Hom(D,\cR_K(\chi_C)).$ Then
the transpose of $$D/(\varphi-1)D= H^1_\dagger(H_K,V^*(1)) \xrightarrow{\alpha} H^1(H_K,V^*(1))$$ defines a surjective map of coadmissible $D^K(\Gamma_K,K)$-modules 
$$\operatorname{comp} \colon \mathcal{D}^{K} \otimes_{\Lambda}H^1_{\text{Iw}}(K_\infty/K,V) \to \check{D}^{\Psi=1}.$$
\end{thm}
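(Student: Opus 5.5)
We build the map by dualising and pass from surjectivity of the map of compact-type spaces to surjectivity of the map of coadmissible modules via closed-range arguments.

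\emph{Setup.} Write $W:=H^1(H_K,V^*(1))$. By Tate duality $W'\cong H^1_{\text{Iw}}(K_\infty/K,V)$. The map $\alpha\colon D/(\varphi-1)D=H^1_\dagger(H_K,V^*(1))\to W$ is continuous and $\Gamma_K$-equivariant. By the preceding lemma its image lies in $W^{K-la}$. Since the source is of compact type (it is an LF-type quotient of $D$ with finite-dimensional obstructions), $\alpha$ factors through a continuous injection $\alpha'\colon D/(\varphi-1)D\to W^{K-la}$.

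\emph{Constructing $\operatorname{comp}$.} Transposing $\alpha'$ gives a map
\[
(W^{K-la})'_b\to \big(D/(\varphi-1)D\big)'.
\]
By Example \ref{ex:H1admissible} the source is $\mathcal{D}^K\otimes_{\Lambda}H^1_{\text{Iw}}(K_\infty/K,V)$, which is coadmissible. For the target, residue duality for analytic $(\varphi_K,\Gamma_K)$-modules identifies $\cR_K'$ with $\cR_K(\chi_C)\,dT$. Under the induced perfect pairing $D\times\check D\to K$, the operator $\varphi$ is adjoint to $\Psi$. Hence
\[
\big(D/(\varphi-1)D\big)'=\check D^{\Psi=1},
\]
and this identification is $\mathcal{D}^K$-linear, where $\Gamma_K$ acts through the contragredient twisted by $\iota$. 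The target is a coadmissible, indeed finitely presented, $D^K(\Gamma_K,K)$-module by Corollary \ref{cor:perfectpsi} and Theorem \ref{thm:EPCformel}, applied to the $K$-analytic module $\check D$. Here we use that $\check D$ is $K$-analytic because $D$ and $\cR_K(\chi_C)$ are. So $\operatorname{comp}$ is a $\mathcal{D}^K$-linear map between coadmissible modules, hence automatically continuous with closed image.

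\emph{Surjectivity.} Since the image of $\operatorname{comp}$ is closed, by Hahn--Banach it suffices to show that the transpose of $\operatorname{comp}$ is injective on the appropriate dual spaces. Using reflexivity of these compact-type spaces and their Fréchet duals, this transpose is $\alpha'$ itself, and $\alpha'$ is injective by the comparison theorem of \cite{SV23comparing}. Concretely, a closed proper submodule of a coadmissible module has a nonzero continuous functional vanishing on it. Such a functional is an element of $D/(\varphi-1)D$ killed by $\alpha'$, so it is zero.

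\emph{Main obstacle.} The hard part is the functional-analytic bookkeeping. We must verify that $D/(\varphi-1)D$ is Hausdorff of compact type, so that $\varphi-1$ is strict; this should follow from the analogue of the finite-dimensionality of the cokernel of $\Psi-1$ together with \cite[Lemma 1.11]{steingart2024iwasawa}. We must also check that the strong dual of $D/(\varphi-1)D$ really equals $\check D^{\Psi=1}$ with its coadmissible topology; for the latter, Lemma \ref{lem:coadm-top-given-top} will be used.
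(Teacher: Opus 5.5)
Your proposal is correct and follows essentially the paper's argument: identify $\check D^{\Psi=1}$ with the continuous dual of $D/(\varphi-1)D$ via the residue pairing, use injectivity of $\alpha$ together with Hahn--Banach and reflexivity (the weak-star topology equals the weak topology on $\check D^{\Psi=1}$) to see that the image of the transpose is dense, and conclude because a map of coadmissible modules has closed image. One correction to your ``main obstacle'' paragraph: $D/(\varphi-1)D=H^1_\dagger(H_K,V^*(1))$ is infinite-dimensional, so no finite-cokernel argument for $\varphi-1$ exists, but none is needed, since Hausdorffness follows from $\alpha$ being a continuous injection into the Banach space $H^1(H_K,V^*(1))$, and neither compact type nor strictness of $\varphi-1$ enters the duality argument.
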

\begin{proof}
Recall from \cite[Lemma 4.5.1]{SchneiderVenjakobRegulatorBook} that we have a sesquilinear residue pairing
$D \times \check{D} \to K$ which allows us to identify $\check{D}^{\Psi=1}$ with the subspace of continuous functionals on $D$ vanishing on $(\varphi-1)D.$ 
By injectivity of $\alpha$, the image of $\alpha'$ is dense in the weak star topology (cf. \cite[Theorem 7.4.9]{PGS}); this is also the weak topology of $X:=\check{D}^{\Psi=1}.$ If the closure $Y$ of $\operatorname{Im}(\alpha')$ was not all of $X,$ then $X/Y$ would admit a non-zero functional, contradicting \cite[Theorem 5.2.5]{PGS}. We conclude that the image of $\alpha'$ is dense. Finally a map of coadmissible modules is automatically strict, and hence has closed image. 
\end{proof}
In \cite{steingart2024iwasawa}  it was shown that the surjectivity of the comparison map would imply the coadmissibility of the $\Psi=1$ eigenspace and it was believed that it could serve as an input to prove coadmissibility. In this present note we took a different approach and proved coadmissibility directly and it turns out that this coadmissibility is in fact equivalent to the surjectivity of the comparison map.

\let\stdthebibliography\thebibliography
\let\stdendthebibliography\endthebibliography
\renewenvironment*{thebibliography}[1]{%
	\stdthebibliography{BHH+22}}
{\stdendthebibliography}

\bibliographystyle{amsalpha}

\bibliography{Literatur}
\end{document}